\documentclass[12pt]{amsart}

\usepackage[margin=1.2in]{geometry}

\usepackage{amssymb, xypic, amsmath, amscd,amsthm,mathrsfs, color}
\usepackage{amsxtra}
\usepackage[pdftex]{graphicx}
\usepackage{enumitem}
\usepackage[normalem]{ulem}
\xyoption{all}
 \usepackage{hyperref}
 \usepackage{color,cancel}
 \usepackage{soul}

\newtheorem{thm}{Theorem}[section]
\newtheorem{prop}[thm]{Proposition}
\newtheorem{lem}[thm]{Lemma}
\newtheorem{cor}[thm]{Corollary}

\theoremstyle{definition}
\newtheorem{definition}[thm]{Definition}
\newtheorem{example}[thm]{Example}

\theoremstyle{remark}
\newtheorem{remark}[thm]{Remark}

\numberwithin{equation}{section}

\def\ri{\rightarrow}

\def\id{\textnormal{id}}
\def\ev{\textnormal{ev}}
\def\OGW{\textnormal{OGW}}
\def\wt#1{\widetilde{#1}}

\def\cJ{\mathcal{J}}

\def\eps{\epsilon}
\def\prt{\partial}
\def\mk#1{\mathfrak{#1}}
\def\co{\colon\!}
\def\fp{\textnormal{fp}}
\def\H{\mathscr{H}}

\def\o{\mathfrak{d}}
\def\sD{\mathscr{D}}
\def\fc{\mathfrak{c}}

\newcommand{\C}{\mathbb{C}}  
\newcommand{\CP}{\mathbb{C}\mathbb{P}}  
\newcommand{\RP}{\mathbb{R}\mathbb{P}}
\newcommand{\Z}{\mathbb{Z}}  
\newcommand{\Q}{\mathbb{Q}}  


\newcommand{\bp}{\bar{\partial}} 
\newcommand{\M}{\widetilde{\mathcal{M}}}
\newcommand{\R}{\mathbb{R}}
\newcommand{\mf}{\mathfrak{M}}
\newcommand{\Lt}{\Lambda^{\text{top}}}
\newcommand{\ov}[1]{\overline{#1}}


\DeclareMathOperator{\ind}{Ind}
\DeclareMathOperator{\aut}{Aut}

\DeclareMathOperator{\Hom}{Hom}

\begin{document}

\title[Open Gromov-Witten disk invariants] {Open Gromov-Witten disk invariants\\ in the presence of an anti-symplectic involution}
\author[Penka Georgieva]{Penka Georgieva*} \thanks{*Partially supported by   NSF grants DMS-0605003 and DMS-0905738.}
\address{Department of Mathematics, Princeton University, Princeton, NJ 08544}
\email{pgeorgie@math.princeton.edu}

\begin{abstract} For a symplectic manifold with an anti-symplectic involution having non-empty fixed locus, 
we construct a model of the moduli space of real sphere maps out of moduli spaces of decorated disk maps and give an explicit expression for its first Stiefel-Whitney class. As a corollary, we obtain a large number of examples, which include all odd-dimensional projective spaces and many complete intersections, for which many types of real  moduli spaces are orientable. For these manifolds, we define open Gromov-Witten invariants with no restriction on the dimension of the manifolds or the type of the constraints if there are no boundary marked points;
  a WDVV-type recursion obtained in a sequel computes these invariants for many real symplectic manifolds.
 If there are boundary marked points, we define the invariants under some restrictions on the allowed boundary constraints, even though the moduli spaces are not orientable in these cases. 
\end{abstract}

\maketitle

\tableofcontents
\section{Introduction}\label{sec_intro}
 The theory of $J$-holomorphic maps
introduced by Gromov \cite{Gr} plays a central role in the
study of symplectic manifolds. Considerations in theoretical physics led to the
development of the Gromov-Witten
invariants. They are invariants of a symplectic manifold~$M$ and can be interpreted
as  counts of $J$-holomorphic maps from a
closed Riemann surface passing
through prescribed constraints. Open String Theory motivated the study of
$J$-holomorphic maps
from a bordered Riemann surface with boundary mapping to
 a Lagrangian submanifold~$L$ and predicts the existence of open
Gromov-Witten invariants. Their mathematical definition, however, has proved to
be
a subtle point and few
   results have been obtained in this setting.  Katz and Liu \cite{KL, Liu} define such invariants for an $S^1$-equivariant pair~$(M,L)$. Without an $S^1$-action, these invariants have been constructed only in dimension 6 or less.  For example, 
    in dimensions 4 and 6 Welschinger  defines real invariants \cite{Wl, Wel} in the case when $L$ is the fixed locus of an anti-symplectic involution and only point constraints are used and   disk invariants \cite{Wel11,Wel12}  under certain  conditions on the homology of~$L$. 
  Also in the case when $L$ is the fixed locus of an anti-symplectic involution  and of dimension 2 or 3,  Cho \cite{Cho} and    Solomon \cite{Sol} define open Gromov-Witten invariants with point constraints; they can be viewed as a Gromov-Witten realisation of Welschinger's real invariants \cite{Wl, Wel}. This   allowed the extension of the definition to a larger class of manifolds and the use of Gromov-Witten techniques for computations leading to a proof of the real mirror symmetry conjecture in \cite{psw}.  In this paper,  we combine the points of view of  \cite{Wl, Wel}  and  \cite{Cho, Sol}  to define disk/real invariants in higher dimensions and with different types of constraints.

    \begin{remark}
The invariants in this paper are defined only in the case when the Lagrangian submanifold is orientable. Thus, they do not recover the results of  \cite{Sol, Wl, Wel} for non-orientable Lagrangians. Furthermore, in the case boundary marked points are used, the invariants in this paper require the use of a non-trivial constraint from the moduli space of domains. Thus, they do not recover the invariants of \cite{Cho, Sol, Wl, Wel} in these cases as well.
   As in \cite{Cho, Sol, Wl, Wel}, we impose conditions preventing the appearance of codimension 1 sphere bubbling strata. The contributions from such boundary strata can be canceled using moduli spaces of maps from $\CP^1$ with a non-standard involution intertwined with the involution on the target. This is done in \cite{Teh}, where the moduli spaces of such maps are studied.
\end{remark}

If $\tau$ is an anti-symplectic involution  on a symplectic manifold~$M$, the fixed locus~$M^\tau$ is a Lagrangian submanifold. In Section \ref{sec:constr}, we construct a new moduli space $\M_{k,l+1}(M,A)$ by  using the anti-symplectic involution~$\tau$ to glue together the boundaries of several moduli spaces  of {\it decorated} disk maps. By Theorem \ref{nb},  this space    is isomorphic to the moduli space of real sphere maps $\ov{\R\mk M}_{k,l+1}(M,A)$ defined in Section~\ref{transv_sec}.  Theorem \ref{gc}       provides an expression    for its first Stiefel-Whitney class, which leads to a large class of examples for which the moduli space is orientable; see  Theorem \ref{cor_rs}.    Proposition \ref{cor_dmor}  completely specifies   which moduli spaces of domains are orientable. We define open Gromov-Witten invariants for strongly semi-positive manifolds satisfying the conditions of Theorem \ref{cor_rs} with no restriction on the dimension of the manifolds or the type of the constraints if there are no boundary marked points; see Theorem~\ref{thm_ogw0}.
  A WDVV-type relation for these invariants sufficient for computing them in many cases, as well as, vanishing and non-vanishing results  are obtained in~\cite{GZ2}. In the case of~$\CP^{2n-1}$, these numbers provide a lower bound for the number of real rational curves which is shown to be sharp in many cases; see \cite{Jan}.
 In the case when   boundary marked points are used, we place a requirement on the allowed boundary constraints only and define invariants for this class of constraints; see Theorem \ref{thm_ogwk}.  Example~\ref{bpts_ex} demonstrates the non-triviality of these invariants for $\CP^{2n-1}$. When both   the invariants of \cite{Cho, Sol, Wl, Wel}   and ours   are defined, the   numbers differ by a known multiple.   We    furthermore show that  the invariants depend non-trivially on the orienting choices   and   give a refined invariant which is often independent of these choices; see Section \ref{ssec_doc}. \\

Throughout this paper,     $(M,\omega)$ is a symplectic manifold, $\tau\co M\ri M$ is an anti-symplectic involution, 
$L=M^{\tau}\subset M$ is the fixed locus of $\tau$ ($L$ is a Lagrangian submanifold), and   $J$ is a   compatible almost complex
structure on $M$ such that $\tau^*J=-J$. We work only in the genus zero case, which means all maps have domains either a sphere or a disk. We fix either  
\begin{alignat}{2} \label{eq_htg}\H_2(M)&=H_2(M;\Q)&~~ \text{and} ~~\H_2(M,L)&=\text{Im}(H_2(M,L;\Z)\ri H_2(M,L;\Q))\quad \text{or}\\ 
\label{eq_hlg}\H_2(M)&=\pi_2(M)&~~ \text{and} ~~ \H_2(M,L)&=\text{ab}(\pi_2(M,L)),
\end{alignat} 
where $\text{ab}(\pi_2(M,L))$ is the abelianization of $\pi_2(M,L)$. In both cases, there is a natural doubling map 
\begin{equation}\label{eq_dmap}\o\co\H_2(M,L)\ri \H_2(M);
\end{equation}
see Section \ref{sec:constr}.
The results of this paper hold for either choice, with the defined invariants in the former case being the obvious sum of  invariants defined for the latter choice.\\
  
 We use the method of $(J,\nu)$-holomorphic maps adapted to the real and bordered settings to achieve transversality as in \cite{RT1, RT2}.
The set $\cJ_\R$ of allowable pairs $(J,\nu)$, defined in Section \ref{transv_sec}, is infinite-dimensional and contractible.  
  We denote the   moduli space of real $(J,\nu)$-holomorphic maps in the class $A\in \H_2(M)$ with $k$ real and $l$ pairs of ordered conjugate points     by $\ov{\R\mk M}_{k,l}(M, A)$ and the moduli space of bordered $(J,\nu)$-holomorphic maps, with boundary mapping to $L$, in the class $b\in \H_2(M,L)$ with~$k$ boundary and $l$    interior points     by $\ov{\mk M}_{k,l}(M,b)$;  see Section \ref{transv_sec}. A (possibly nodal) real $(J,\nu)$-holomorphic  map is called \textsf{semi-simple} if its restriction to the unstable components of its domain is a simple $J$-holomorphic map;  a related notion, called $\tau$-semi-simple, is introduced in Section \ref{transv_sec} for bordered $(J,\nu)$-holomorphic   maps.  Let 
 $$\ov{\R\mk M}_{k,l}^*(M,A)\subset \ov{\R\mk M}_{k,l}(M, A)\qquad \text{and} \qquad \ov{\mk M}_{k,l}^*(M,b)\subset \ov{\mk M}_{k,l}(M,b)$$ 
 denote the possibly non-compact subsets   of ($\tau$-)semi-simple maps. For a generic $(J,\nu)$, the moduli spaces of ($\tau$-)semi-simple maps are cut transversely;  see Section \ref{transv_sec}.
  Let
  $$\ov{\R\mk M}_{k,l}= \ov{\R\mk M}_{k,l}(\text{pt}, 0)\quad \text{and} \quad \ov{\mk M}_{k,l}=\ov{\mk M}_{k,l}(\text{pt}, 0)
  $$
  be the moduli spaces of real and bordered domains,
  respectively, 
   and 
  \begin{align*}&\ev:  \ov{\R\mk M}_{k,l}(M,A)\rightarrow L^k\times M^l  \quad\,\qquad  \mk{f}: \ov{\R\mk M}_{k,l}(M,A)\rightarrow\ov{\R\mk M}_{k,l}\\
&\ev:  \ov{\mk M}_{k,l}(M,A)\rightarrow L^k\times M^l    \qquad\qquad  \mk{f}: \ov{\mk M}_{k,l}(M,A)\rightarrow\ov{\mk M}_{k,l}
\end{align*}
be the   natural evaluation and forgetful    maps,   respectfully.  \\
 
 We   construct the new moduli space $\M_{k,l+1}(M,A)$ 
 in the case  when the domains 
 have at least one interior marked point $z_0$; the latter is signified by the notation $l\!+\!1$.
  We introduce \textsf{decorations} of $\pm$ on the interior marked points and construct the new moduli space using the moduli spaces of  \textsf{decorated}  disk maps in a class $b$ which doubles to $A$ via (\ref{eq_dmap}); see Section \ref{sec:constr}. The decorations are used to give a 1-1 correspondence between decorated marked disks and marked real spheres. The first interior marked point $z_0$ has a special role and is  always decorated with a $+$. We use this point to determine an order on the boundary strata comprised of two bubbles and define an involution $g$ on this boundary using the anti-symplectic involution $\tau$. The new moduli space $\M_{k,l+1}(M,A)$ is   obtained by identifying the boundaries of the decorated moduli spaces via~$g$.
If $k=0$,  we take the class $A\in \H_2(M)$ in the restricted set
$$
\mathcal{A}= \{
A\in \H_2(M)| \, A=\o([u\co(D^2,\prt D^2)\ri (M,L)])\Rightarrow [u_{|\prt D^2}]\neq 0 \in \pi_1(L) \}
$$
in order to 
prevent sphere-bubbling in codimension 1, on which $g$ is not defined.
For example,  all odd-degree classes in $\CP^n$ belong to $\mathcal{A}$. The restriction on the class~$A$  
 is removed in \cite{Teh} by also considering real maps with non-standard involution on the domain~$\CP^1$.

\begin{thm}\label{nb} Suppose $M$ is a symplectic manifold with an anti-symplectic involution~$\tau$, $A\in\H_2(M)$, and $k,l\in \Z^{\geq 0}$ with $k+l>0$.   
\begin{enumerate}[label=(\arabic*), leftmargin=*] 
\item \label{it_evm} The evaluation maps 
\begin{align*}
\ev_{x_i}\co \widetilde{\mathcal{M}}_{k,l+1}(M,A) & \rightarrow L, & \quad [u] & \mapsto u(x_i),\\
\widetilde{\ev}_{z_i}\co \widetilde{\mathcal{M}}_{k,l+1}(M,A) & \rightarrow M, & \quad [u]&\mapsto\begin{cases}
u(z_i),&\,\text{if decoration of }\, z_i \,\text{is}\,+,\\
\tau\circ u(z_i),&\,\text{if decoration of }\, z_i \,\text{is}\,-,\end{cases}\end{align*}
 at the boundary and interior marked points, respectfully, are
    continuous. 
\item\label{it_nb} If $k> 0$ or    $A\in\mathcal{A}$,  the  moduli space of  $\tau$-semi-simple maps $\widetilde{\mathcal{M}}_{k,l+1}^*(M,A)$ is a (possibly non-compact) manifold without   boundary for a generic $(J,\nu)\in\cJ_\R$.  It is smooth outside codimension 2.
\item \label{it_iso} There is a natural homeomorphism  
$$
\sD: \widetilde{\mathcal{M}}_{k,l+1}(M,A)\cong \overline{\mathbb{R}\mathfrak{M}}_{k,l+1}(M,A).
$$
\end{enumerate}
 \end{thm}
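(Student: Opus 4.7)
\medskip

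The plan is to treat the three statements in the order in which they build on each other. Part \ref{it_evm} is a compatibility check for the gluing relation defining $\M_{k,l+1}(M,A)$, part \ref{it_iso} is the conceptual heart identifying decorated bordered maps with real sphere maps by restriction to a half, and part \ref{it_nb} then follows by matching codimension-one strata in pairs using the involution $g$, provided the hypotheses rule out the exceptional strata where $g$ is ill-defined.

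For \ref{it_evm}, I would unwind the gluing: $\M_{k,l+1}(M,A)$ is a quotient of the disjoint union $\bigsqcup_{b,\eps}\ov{\mk M}_{k,l+1}(M,b)^{\eps}$ over classes $b$ with $\o(b)=A$ and decorations $\eps\in\{\pm\}^{l+1}$ (with $\eps_0=+$), by identifications along boundary strata generated by the involution $g$. Since $\tau$ fixes $L$ pointwise, the map $\ev_{x_i}\co [u]\mapsto u(x_i)\in L$ is invariant under $g$, because on a two-component disk-disk boundary stratum the boundary value in $L$ is preserved when one bubble is replaced by its $\tau$-image. For an interior marked point $z_i$, the formula for $\wt\ev_{z_i}$ flips $u(z_i)$ to $\tau\circ u(z_i)$ exactly when the decoration flips, and the involution $g$ either preserves both the map and the decoration at $z_i$ (if $z_i$ stays on the non-reflected bubble) or replaces $u$ by $\tau\circ u$ and simultaneously flips the decoration, so $\wt\ev_{z_i}$ is well-defined on the quotient. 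Continuity on each open piece is inherited from the evaluation on $\ov{\mk M}_{k,l+1}(M,b)^{\eps}$, and across the gluing it follows from the identification above together with continuity of $\tau$.

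For \ref{it_iso}, I would construct $\sD$ and its inverse. Given a real $(J,\nu)$-holomorphic sphere $v\co(\CP^1,c)\to(M,\tau)$ with ordered real marked points $x_1,\dots,x_k\subset\RP^1$, conjugate interior pairs $\{z_i^+,z_i^-\}$ for $i=0,\dots,l$, and an ordering preferring $z_0^+$, the standard upper hemisphere $D_+\subset\CP^1$ is a disk bounded by $\RP^1$, and $u:=v|_{D_+}\co(D^2,\prt D^2)\to(M,L)$ is a bordered $(J,\nu)$-holomorphic map in a class $b$ with $\o(b)=[v]=A$. Decorate each interior marked point by $+$ if $z_i^+\in D_+$ and by $-$ otherwise; this gives a well-defined element of $\M_{k,l+1}(M,A)$, and different choices of which hemisphere to call $D_+$ differ exactly by the involution $g$ on the relevant boundary stratum (or by a global decoration flip, which is part of the identification defining $\M$). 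Conversely, a decorated disk map $(u,\eps)$ doubles to a real sphere by setting $v|_{D_+}=u$ and $v|_{D_-}=\tau\circ u\circ c$, placing each interior marked point in the hemisphere dictated by its decoration; this descends to the quotient by $g$. Compatibility with nodal degenerations (which is needed to get a map on the Gromov compactifications) reduces to the same statement applied bubble-by-bubble, using that a real nodal sphere either has a pair of conjugate nodes or a node on $\RP^1$, corresponding respectively to an interior nodal degeneration of $u$ and to a boundary-nodal degeneration, both of which are present in $\M$.

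For \ref{it_nb}, having identified $\M_{k,l+1}(M,A)$ with $\ov{\R\mk M}_{k,l+1}(M,A)$, a codimension-one stratum arises only from a real sphere breaking along a real node, i.e., from a boundary-disk bubble on the decorated side. Each such stratum of $\ov{\mk M}_{k,l+1}(M,b)^{\eps}$ is paired by $g$ with a unique such stratum of $\ov{\mk M}_{k,l+1}(M,b')^{\eps'}$, and the gluing along $g$ turns the joint boundary into an interior codimension-one locus of $\M$; for $\tau$-simple maps, generic $(J,\nu)\in\cJ_\R$ makes these strata transverse of the expected codimension, so no actual boundary remains. The one obstruction is a boundary stratum carrying a disk bubble whose boundary class is trivial in $\pi_1(L)$: here $g$ fails to pair it with a stratum of the same type, since the would-be partner lies in a different component. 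This is precisely ruled out either by $k>0$ (a boundary marked point forbids such a bubble from appearing as a single-disk factor at the relevant stratum) or by $A\in\mathcal{A}$ (no summand of $A$ comes from a disk class with trivial boundary).

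The main obstacle I anticipate is the careful combinatorial bookkeeping in \ref{it_iso}: one must verify that the equivalence relation generated by $g$ on the disjoint union of decorated disk moduli spaces coincides exactly with the identifications $v\sim v\circ\phi$ by real automorphisms of the sphere, and that every Gromov limit of real sphere maps corresponds under $\sD$ to a limit inside $\M$, not merely to a configuration in a boundary stratum. This requires a case analysis of the real nodal configurations and a matching of the orderings and decorations induced by the distinguished point $z_0^+$.
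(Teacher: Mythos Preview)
Your treatment of \ref{it_evm} and \ref{it_iso} follows essentially the same approach as the paper: compatibility of the evaluation maps with the gluing $g$, and the doubling/halving correspondence between decorated disk maps and real sphere maps. The paper's injectivity argument for \ref{it_iso} is more explicit about multi-component domains (showing that a real automorphism of the double can be made, after applying $\mk c$ on some components, to preserve the chosen half), but you correctly flag this bookkeeping as the main obstacle.

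Your argument for \ref{it_nb}, however, has a genuine gap and a misidentification. The boundary of $\wt{\mk M}_{k,l+1}(M,b)$ has two kinds of codimension-one strata: (a) two-disk strata, on which $g$ is defined, and (b) a stratum where the boundary circle collapses and the disk becomes a sphere, on which $g$ is \emph{not} defined. The hypothesis $k>0$ or $A\in\mathcal{A}$ rules out type (b): with $k>0$ a boundary marked point prevents the boundary from collapsing, and $A\in\mathcal{A}$ forces $[\prt u]\neq 0$ in $\pi_1(L)$ for every disk summand. You describe this obstruction as ``$g$ fails to pair it with a stratum of the same type,'' which is not the mechanism; $g$ simply does not act on that stratum at all.

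More importantly, you assert that on the two-disk strata ``each such stratum \dots\ is paired by $g$ with a unique such stratum \dots\ so no actual boundary remains,'' but this requires $g$ to be a \emph{free} involution there. If $b_2=\bar b_2$, the map $g$ carries the stratum to itself and could have fixed points; at such a fixed point the quotient would have genuine boundary (or an orbifold locus). The paper closes this gap by invoking Corollary~\ref{cor_mceqv}: a fixed point of $g$ would force $[u_2]=[\tau\circ u_2\circ c_{D^2}]$, which implies the double $\hat u_2$ is multiply covered, contradicting $\tau$-simplicity. This is precisely where the $\tau$-simple hypothesis enters, and your proposal does not use it.
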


By Theorem \ref{nb}\ref{it_iso},  the open Gromov-Witten invariant we define counts real curves passing through prescribed constraints. In particular, if there is a single real curve passing through given constraints, we   count it as~$\pm 1$; see Example \ref{ex_crcex}. The reason for this is that the interior marked point $z_0$ determines the half and the decorations  on the rest of the interior marked points determine whether they belong to the same half or not and for only one choice of decorations will the map pass through the prescribed constraints.   In contrast,     the invariants  of \cite{Cho, Sol}      count such a curve with certain multiples, which relate Cho's and Solomon's invariants to Welschinger's invariants;  for Solomon's invariants, this multiple is described in \cite[Section 7]{Sol2}. On the intersection on which   the invariants of \cite{Cho, Sol} and ours  are defined, the numbers are related by these multiples as well. 
\\
 
 We next study the orientability of the new moduli space $\M_{k,l+1}^*(M,A)$ using the approach of \cite{Geo1}.  We calculate the evaluation of the first Stiefel-Whitney class of $\M_{k,l+1}^*(M,A)$ on a loop $\gamma$ in the moduli space, {\it without} imposing any conditions on the  Lagrangian $L=M^\tau$. We start with any choice of trivializations as in  \cite[Proposition~4.9]{Geo1}, which induce an orientation  of the moduli space at a point $u_0$ on~$\gamma$. We then transport these trivializations along the loop $\gamma$;  at each codimension one boundary strata $\gamma$ crosses, these trivializations define an orientation on the target and the domain of the map $g$. We call the sign of $g$ with respect to these orientations the \textsf{relative sign of  $g$} and   compute it in Section \ref{sec_rsc}. 
  In the relative spin or pin$^\pm$ case, one can talk about the \textsf{sign} of $g$; this sign was computed in \cite{FOOO2} in the relative spin case and in \cite{Sol} in the relative pin$^\pm$ case.
 We then describe the first Stiefel-Whitney class of the moduli space evaluated on the loop $\gamma$  as the sum of the relative signs at each codimension one strata we cross plus the difference in the trivializations at the beginning and at the end of the loop.  Theorem \ref{gc}  expresses this sum in terms of characteristic classes. As a consequence, we obtain in Theorem \ref{cor_rs} a criterion under which the moduli space is as close to being orientable as it can possibly be in light of Proposition \ref{cor_dmor}; this motivates the following definition.

\begin{definition} \label{def_tori} A symplectic manifold
$(M,\omega)$ with an anti-symplectic involution $\tau$ is called \hbox{\textsf{$\tau$-orientable}} if  
   $L=M^\tau$ is orientable and there is a complex bundle $E\ri M$  with an involution $\wt\tau$ lifting~$\tau$ such that $w_2 (  2E^{\wt\tau}\oplus TL)=0$ and
   $$4~|~(\mu(2E, 2E^{\wt\tau})+\mu(M,L))\cdot b \qquad\forall~b\in \H_2(M,L)~\text{s.t.} ~b=-\tau_* b.$$
    A choice of a spin structure on $2E^{\wt\tau}\oplus TL$ and  if not all Maslov indices of the pair $(2E\oplus TM, 2E^{\wt\tau}\oplus TL)$ are divisible by~4, a choice of representatives $b_i\in H_2(M,L;\Z)$ for the elements of $H_2(M,L;\Z)/\text{Im}(\id+\tau_*)$,  is called a \textsf{$\tau$-orienting structure} for $M$. 
\end{definition}
By Lemma \ref{lem_orm}, a $\tau$-orienting structure determines an orientation of the relative determinant bundle of the forgetful map 
\begin{equation}\label{eq_indbl} 
\mk f:\M_{k,l+1}^*(M,A)\ri \M_{k,l+1}\equiv\M_{k,l+1}(\text{pt}, 0)
\end{equation}
outside a certain codimension one stratum $U$.
The list of  $\tau$-orientable manifolds     includes    
$$(\CP^{2n-1},\R\mathbb{P}^{2n-1}), \quad(\CP^1\times \CP^1, \R \mathbb{P}^1\times\R \mathbb{P}^1), \quad(Q\times Q, \text{gr}(f)),$$
 where $Q$ is a symplectic manifold with $w_2(Q)=0$ and $\text{gr}(f)$ is the  graph of   a symplectomorphism $f$ on~$Q$ . Theorem~\ref{cor_rs}   also applies to the  complete intersections $X_{n;\bf{a}}\subset \CP^n$ satisfying the conditions of Corollary \ref{cor_cp}.
  The orientability question is further studied in \cite{GZ1} and less restrictive conditions ensuring orientability are obtained; see \cite[Corollary 5.9]{GZ1}. The orientations induced by a $\tau$-orienting structure, however,  appear to be more natural in the light of recursion formulas such as  in \cite{GZ2} and in higher genus as in \cite{GZ3}; see \cite[Remark 2.6]{GZ2} and \cite[Definition 1.1]{GZ3}.

\begin{remark}
Neither a spin nor a relatively spin structure is sufficient for the orientability of the determinant bundle over the glued space $\M_{k,l+1}^*(M,A)-U$. The $\tau$-orientability condition of this paper implies the existence  of a relative spin structure with the needed additional property  ensuring the orientability of this bundle.
This additional property is not used in
\cite{Cho, Sol}
where the non-orientability of the determinant bundle
is cancelled with the non-orientability coming from the  marked points over
the relevant codimension one strata.
  This method, however,   cannot be used in higher dimensions. The orientation induced by a $\tau$-orienting structure differs from the orientation induced by the associated relative spin structure when $c_1(A)$ is not divisible by $4$.
\end{remark}

Studying the sign of $g$ also provides a simpler approach to results about the orientability of the moduli space of domains
obtained in Proposition~5.7 and Corollary~6.2 in~\cite{ch}. In particular, we prove the following proposition in Section \ref{sec_rsc}.

\begin{prop}\label{cor_dmor} Let $k,l\in \Z^{\geq0}$ with $k+l>0$. The moduli space of domains
$$\M_{k,l+1}\cong\overline{\R\mf}_{k,l+1}$$ is orientable if and only if $k=0$ or $(k,l)=(1,0), (2,0)$. If the moduli space  is orientable, it has a canonical orientation.
\end{prop}

We define open Gromov-Witten invariants for compact    $\tau$-orientable manifolds which are      strongly semi-positive; see Definition~\ref{def_ssp}. We call these manifolds \textsf{admissible}.  

\begin{thm} \label{thm_ogw0}Let $(M,\omega, \tau)$ be   admissible and $A\in \mathcal{A}$. 
\begin{enumerate}[label=(\arabic*), leftmargin=*] 
\item \label{it_ori}  A choice of a  $\tau$-orientating structure for  $M $ determines an orientation on the moduli space $\M_{0,l+1}^*(M,A)$.   
\item \label{it_inv} For a generic $(J,\nu)\in\mathcal{J}_\R$ and a choice of a $\tau$-orienting structure, the images of $\M_{0,l+1}(M, A)$ under the   maps $\ev$ and $\ev\times \mk f$ define   elements 
\begin{equation*} 
\OGW_{A,0,l+1}\in H_*(M^{l+1}; \Z)\qquad \text{and}\qquad
\OGW_{A,0,l+1}\in H_*(M^{l+1}\times \M_{0,l+1}; \Z),
\end{equation*}
respectively.
These classes are independent of the choices of a generic \mbox{$(J,\nu)\in \cJ_\R$} and of a strongly semi-positive deformation of $\omega$.
\end{enumerate}
\end{thm}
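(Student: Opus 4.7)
The plan is to deduce part~\ref{it_ori} from the combination of Theorem~\ref{cor_rs}, Theorem~\ref{cor_dmor}, and Lemma~\ref{lem_orm}, and then to build the invariants of part~\ref{it_inv} by a standard pseudocycle/cobordism argument adapted to the real setting.

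\emph{Orienting the moduli space.} Since $k=0$, Theorem~\ref{cor_dmor} provides a canonical orientation on the moduli space of domains $\M_{0,l+1}$. By Lemma~\ref{lem_orm}, the $\tau$-orienting structure determines an orientation on the relative determinant bundle of the forgetful map $\mk f\co\M_{0,l+1}(M,A)^*\ri\M_{0,l+1}$ outside a codimension-one stratum $U$. Pulling back the canonical orientation along $\mk f$ and combining it with the relative orientation orients $\M_{0,l+1}(M,A)^*\setminus\mk f^{-1}(U)$. To promote this to a global orientation, I would show that $w_1$ of the moduli space vanishes on any loop crossing $U$ transversely. This is precisely the conclusion of Theorem~\ref{cor_rs} once the $\tau$-orientability hypothesis is imposed: the expression for $w_1$ provided by Theorem~\ref{gc} reduces to the characteristic-class combination appearing in Definition~\ref{def_tori}, and this vanishes by hypothesis.

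\emph{Defining the invariants.} First, by Theorem~\ref{nb}\ref{it_nb}, since $A\in\mathcal{A}$, the $\tau$-simple moduli space $\M_{0,l+1}(M,A)^*$ has no boundary. Second, strong semi-positivity ensures that all loci excluded from the simple open stratum (non-simple maps and bubbled configurations in the Gromov compactification) sit as subsets of real codimension at least two. Therefore $\ev$ and $\ev\times\mk f$ restricted to $\M_{0,l+1}(M,A)^*$ define pseudocycles of the expected dimension in $M^{l+1}$ and in $M^{l+1}\times\M_{0,l+1}$, respectively; the orientation from the previous paragraph promotes each pseudocycle to an integral homology class, which I take as the definition of the two $\OGW$ classes. (Because $\M_{0,l+1}$ itself is compact with boundary, the second pushforward is naturally a relative class in the pair appearing in the statement.)

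\emph{Independence of choices and main obstacle.} Given two generic pairs $(J_0,\nu_0),(J_1,\nu_1)\in\cJ_\R$, choose a generic path $(J_t,\nu_t)$ in $\cJ_\R$, which is contractible and hence path-connected. The parametrized moduli space $\bigcup_t \M_{0,l+1}(M,A;J_t,\nu_t)^*$ is a pseudocycle cobordism between the two fibers; the construction of part~\ref{it_ori} extends verbatim to the parametrized setting and orients this cobordism. Strong semi-positivity again bounds the dimension of the excluded strata so that no spurious boundary contribution appears, and the cobordism identifies the pushforward classes at the endpoints. An analogous argument applied to a strongly semi-positive deformation $\omega_t$ of~$\omega$, together with a $\tau$-compatible family of $(J_t,\nu_t)$, yields invariance under deformation of the symplectic form. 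The delicate point throughout is the extension of the orientation across $\mk f^{-1}(U)$ and its parametrized analogue, which rests on the relative-sign computation for the gluing involution~$g$ encapsulated in Theorems~\ref{gc} and~\ref{cor_rs}; once those are in hand, the remaining steps are standard applications of Gromov compactness and pseudocycle cobordism arguments adapted to the real structure.
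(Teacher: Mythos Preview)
Your approach is essentially the same as the paper's and is correct, but you have introduced an unnecessary complication in part~\ref{it_ori}. The stratum $U$ consists of configurations with a single boundary marked point on the second bubble; since $k=0$ there are no boundary marked points at all, so $U=\emptyset$. Consequently Lemma~\ref{lem_orm} already orients $\det D^{(TM,TL)}_{/i_{(TM,TL)}}$ over \emph{all} of $\M_{0,l+1}(M,A)^*$, and combining this with the canonical orientation of $\M_{0,l+1}$ from Theorem~\ref{cor_dmor} via
\[
\Lt \M_{0,l+1}(M,A)^*\cong \det D^{(TM,TL)}_{/i_{(TM,TL)}}\otimes \mk f^*\Lt\M_{0,l+1}
\]
orients the moduli space directly. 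Your appeal to Theorem~\ref{cor_rs} to extend across $U$ is harmless but superfluous here. (Incidentally, $U$ lives in $\M_{k,l+1}(M,A)^*$, not in the target of $\mk f$, so the expression $\mk f^{-1}(U)$ is a notational slip.)

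One further imprecision in part~\ref{it_inv}: your parenthetical ``non-simple maps and bubbled configurations'' mischaracterizes what is excluded from $\M_{0,l+1}(M,A)^*$. Simple maps from bubbled domains \emph{are} included in the $*$-moduli space; the complement consists precisely of the $\tau$-multiply-covered maps (which, for generic $(J,\nu)$, all have unstable domain). It is the image of these $\tau$-multiply-covered maps that the strongly semi-positive condition bounds in codimension. With that correction, your pseudocycle and cobordism arguments match the paper's.
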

 Given $h^M_1,..,h^M_{l+1}\in H^*(M;\Z)$ and $h^{DM}\in H^*(\M_{0,l+1}; \Z)$, the corresponding \textsf{open Gromov-Witten invariant} is the number 
 \[\OGW_{A,0,l+1}(h^M_1,..,h^M_{l+1},h^{DM})=(h^M_1,..,h^M_{l+1}, h^{DM})\cdot \OGW_{A,0,l+1}\]   if the total  degree of $h^M_1,..,h^M_{l+1}, h^{DM}$
is   equal to the dimension of the moduli space and~0 otherwise. 
 A complete recursion for   $\OGW_{A,0,l+1}$ of $\CP^{2n-1}$
 is obtained in \cite{GZ2}, where it is also shown  \cite[Corollary 1.4]{GZ2} that if all $h_i$'s are in odd complex codimensions,   the invariants are non-zero and that they vanish in all other cases. In \cite{Jan}, the vanishing is shown to be sharp in many cases i.e. there are  geometric positions of the constraints for which there are no real curves passing through them. The author is not aware of an example for which these   and Welschinger's invariants do not provide the best lower bound for the count of real curves.\\

  If $k>0$, the moduli space $\M_{k,l+1}(M,A)$ is  not orientable, unless $A$ is minimal and $(k,l)=(1,0),(2,0)$, even if $M$ is admissible. Thus, we generally need cohomology classes with   correctly twisted coefficients in order to obtain a well-defined count. In some cases, it is possible to define invariants without such twisting by  using intersection theory and showing that in a one-parameter family we do not cross ``bad" strata. This is the approach taken in \cite{Cho,Sol, Wl, Wel}, where the authors show that
  if  the dimension of the Lagrangian is 2 or 3 and   only point constraints are used, the strata contributing to the first Stiefel-Whitney class of the moduli space are never crossed; these arguments 
  do not apply to higher dimensions and it is not known whether their conclusion   holds. 
   We overcome these restrictions in many cases by using cohomology classes from the moduli space of domains with coefficients in its orientation system or equivalently their Poincare duals. These classes provide the correct twisting over the stable domains in the case of an admissible manifold as evident from Theorem~\ref{cor_rs}. The contribution to the first Stiefel-Whitney class of $\M_{k,l+1}^*(M,A)$ coming from unstable domains is recorded by  the codimension one stratum $U$ having a single boundary marked point on the second bubble (in addition to the node). This means that the only ``bad" stratum is~$U$ and   showing that it is not crossed is far less restrictive. In particular, this is the case if the minimal Maslov index is greater than the dimension of the Lagrangian,    no matter what   the dimension of the Lagrangian and   the type of  the  constraints are.\\

 If $k>0$ and  $(M, \omega,\tau)$ is admissible, we define the invariant as a signed count of maps passing through prescribed constraints. The sign of a  map~$u$ is determined by the relative orientation of (\ref{eq_indbl})     and is denoted by $\mk s(u)$; see the discussion before the proof of Theorem \ref{thm_ogwk} in Section \ref{sec_ogw}.
     Given $(J,\nu)\in \cJ_\R$ generic and   
 \begin{equation} \label{eq_cc} [A_1],\dots,[A_{l+1}]\in H_*(M;\mathbb{Z}),\quad[B_1],\dots,[B_k]\in H_*(L;\mathbb{Z}), \quad  [\Gamma]\in H_*(\M_{k,l+1}; \mathbb{Z}),
 \end{equation}
 let 
    the number 
    \begin{equation}\label{eq_ogw}\OGW_{A,k,l+1}([A_1],\dots,[A_{l+1}], [B_1],\dots, [B_k], [\Gamma])\in \Z \end{equation}
    be the signed cardinality of the intersection 
    $$(\ev\times\mk{f})(\M_{k,l+1}(M,A))\cap \prod^{l+1}_{i=1}A_i\times\prod^{k}_{j=1}B_j\times \Gamma\subset M^{l+1}\times L^k\times \M_{k,l+1}$$
     if the total codimension of $[A_1],\dots,[A_{l+1}], [B_1],\dots,[B_k],[\Gamma]$ is equal to the dimension of $\M_{k,l+1}(M,A)$ and zero otherwise. Let $|PD(B_j)|$ denote the cohomological degree of the Poincare dual of $[B_j]$.

\begin{thm}\label{thm_ogwk} Let $(M,\omega,\tau)$ be admissible, $A\in\H_2(M)$, $k>0$, $c_{\min}$ be   the minimal Chern number of $(M,\omega)$, and    $[A_i]$, $[B_j] $, and $[\Gamma] $ be as in (\ref{eq_cc}). If $|PD(B_j)|<c_{\min}$ for all $j=1,\dots,k$,  
the number (\ref{eq_ogw})
 is independent of the choice of a regular pair \hbox{$(J,\nu)\in \mathcal{J}_{\R}$}, of a strongly semi-positive deformation of $\omega$, and of the choice of representatives $A_i, B_j$, and $\Gamma$. 
\end{thm}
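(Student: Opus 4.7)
The plan is a parametrized cobordism argument adapted to the partially oriented moduli space. To establish independence of the regular pair $(J,\nu)\in\cJ_\R$, connect two regular choices $(J_0,\nu_0)$ and $(J_1,\nu_1)$ by a generic path $(J_t,\nu_t)_{t\in[0,1]}\subset \cJ_\R$ and form the parametrized moduli space, which is a smooth manifold of one dimension higher than $\M_{k,l+1}(M,A)^*$. Strong semi-positivity together with Gromov compactness in the real setting ensures that in the compactification all limit strata other than the standard disk-bubbling boundary have real codimension at least two. Consequently, the constrained intersection
\[
(\ev\times\mk f)^{-1}\Bigl(\prod_i A_i\times\prod_j B_j\times\Gamma\Bigr)
\]
is a compact one-manifold whose boundary $0$-cycle records the difference of the signed counts at $t=0$ and at $t=1$.

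The sign $\mk s(u)$ entering~(\ref{eq_ogw}) is determined by the relative orientation of the forgetful map $\mk f$. By Lemma~\ref{lem_orm}, a $\tau$-orienting structure orients the relative determinant of $\mk f$ away from the single codimension-one stratum $U$, namely the stratum in which a disk bubble carries exactly one boundary marked point in addition to the node. Combined with the canonical orientation of $M^{l+1}$, the orientation of $L^k$ coming from $\tau$-orientability, and the chosen representative of $[\Gamma]$, this produces a coherent orientation on the constrained parametrized moduli away from $U$, so the signed counts at $t=0$ and $t=1$ agree provided the one-manifold cobordism is disjoint from $U$.

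The heart of the argument is therefore to show that, under the hypothesis $|PD(B_j)|<c_{\min}$ for all $j$, the constrained parametrized moduli does not meet $U$. A configuration in $U$ is a nodal pair $(u_1,u_2)$ in which the main component $u_1$ of class $b_1$ carries all marked points except one boundary point $x_{j_0}$, while the bubble $u_2$ of class $b_2=b-b_1$ carries only $x_{j_0}$ and the node. Since $u_2$ is non-constant, $b_2$ is non-trivial in $\H_2(M,L)$; strong semi-positivity then forces $\o(b_2)$ to satisfy $c_1(\o(b_2))\ge c_{\min}$, and the doubling relation gives a corresponding lower bound on $\mu(b_2)$. A dimension count on the moduli of bubble maps of class $b_2$ with one boundary marked point plus the node, intersected with $u_2(x_{j_0})\in B_{j_0}$ and coupled via fiber product at the node to the constrained main component, shows that under $|PD(B_j)|<c_{\min}$ the total expected dimension of the $U$-contribution in the parametrized moduli is strictly negative, so this contribution is generically empty. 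This precise dimension count, matching the virtual dimension of the decorated moduli against the Chern--Maslov doubling relation, is the main technical obstacle.

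Independence of the representatives $A_i$, $B_j$, $\Gamma$ follows from the same scheme with a path of cycle representatives in place of a path of data; the dimension count in the previous paragraph is insensitive to which parameter varies. Independence under a strongly semi-positive deformation $\omega_s$ of $\omega$ is handled by choosing a compatible family $(J_s,\nu_s)\in\cJ_\R$ and applying the same parametrized cobordism. Throughout, the decisive role of the hypothesis $|PD(B_j)|<c_{\min}$ is to rule out the only potentially orientation-reversing crossing, namely~$U$.
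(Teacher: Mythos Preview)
Your proposal is correct and follows essentially the same approach as the paper: a parametrized cobordism argument in which strong semi-positivity handles the $\tau$-multiply covered locus, Lemma~\ref{lem_orm} provides coherent relative orientations away from $U$, and the hypothesis $|PD(B_j)|<c_{\min}$ is used in a dimension count to rule out crossings of~$U$. The paper makes the last step explicit by writing down the two inequalities that must hold for the $U$-stratum to appear in the cut-down parametrized moduli, adding them to obtain $|\beta_{j_0}|\ge\mu(b_2)$, and then contradicting $\mu(b_2)\ge c_{\min}>|\beta_{j_0}|$; you would benefit from spelling this out rather than leaving ``the total expected dimension \dots\ is strictly negative'' as an assertion.
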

 
 Under the assumptions of Theorem \ref{thm_ogwk}, we call the number (\ref{eq_ogw}) the \textsf{open Gromov-Witten invariant} corresponding to the constraints (\ref{eq_cc}).
All odd-dimensional projective spaces $\CP^{2n-1}$ with their standard involutions are admissible manifolds with minimal Chern number     $2n$. Thus, the open Gromov-Witten invariant is defined for every choice~of 
$$[A_i]\in H_*(\CP^{2n-1},\Z),\quad [B_j]\in H_*(\RP^{2n-1}, \Z),\quad[\Gamma]\in H_*(\M_{k,l+1},\Z).$$ 
If $k=0$, we may take $[\Gamma]$ to be the fundamental class of $\M_{0,l+1}$; in this case, for $A\in H_2(\CP^{2n-1};\Z)$ odd, $\OGW_{A,0,l+1}$ counts maps constrained only by classes in~$\CP^{2n-1}$. If we     impose boundary constraints, 
we must take a $\Z$-homology class of $\M_{k,l+1}$; see \cite{Cey} for the $\Z$-homology of $\M_{k,l+1}$. By Proposition \ref{cor_dmor}, such classes have positive codimension whenever $k+l>1$  and thus the invariant cannot be interpreted as a count of real curves as the marked domain is constrained geometrically. Example~\ref{bpts_ex} demonstrates the non-triviality of these invariants for $\CP^{2n-1}$. \\

As shown in Example \ref{ex_ntq},  changing the $\tau$-orienting structure   used to define the (local) orientation on $\M_{k,l+1}(M,A)$ sometimes results in a significant change   of the open Gromov-Witten invariants, not just up to a sign. We define a refined invariant by also fixing  the class of the fixed loci of the real maps represent in $H_1(L;\Z_2)$. The absolute value of the refined invariant is independent of the choice of a $\tau$-orienting structure if all Maslov indices of  $(2E\oplus TM, 2E^{\wt\tau}\oplus TL)$ in Definition \ref{def_tori} are divisible by~4. The sum over the classes the real loci represent gives the total invariant; this is discussed  in detail at the end of Section \ref{sec_ogw}. E. Brugall\'e and J. Solomon   communicated to us an additional refinement, related to the refinement  in \cite{IKS}, obtained by fixing a class $d\in H_2(M,L';\Z)/(\id+\text{Im}\, \tau_*)$, for a connected component $L'\subset L$, instead of the class~$A$.   All constructions in this paper respect this refinement. The absolute value of the resulting invariant is independent of the choice of a $\tau$-orienting structure. Note that one may also work on the level of $\pi_2(M,L')/(\id+\text{Im}\, \tau_*)$. The corresponding sums of the absolute values of these invariants give   upper bounds for all other versions of the invariants.  \\

In order to define the invariants for manifolds which are not strongly semi-positive, one needs to use a more sophisticated method to prove the moduli space defines a homology class. With Kuranishi structures, one can  similarly define real and bordered maps using real multi-sections and doubling the bordered map to a real one, respectively; in this way, the involution~$g$ does not have fixed points and the new Kuranishi space defines a homology class when it is orientable. Lemma \ref{lem_orm} implies that for \hbox{$\tau$-orientable} manifolds the tangent bundle of the Kuranishi space, as defined in~\cite{FOOO}, is orientable if there are no  boundary marked points, and thus the image of the moduli space defines a homology class with $\Q$-coefficients. If   there are boundary marked points, however,  we cannot   argue    that the stratum $U$ is still avoided. The approach of~\cite{IP2}, which stabilizes all domains, is expected to remove this restriction. \\
 
The paper is organized as follows.  We    describe  the basic  moduli spaces used in the paper in Section~\ref{transv_sec} and construct the new moduli space  $\M_{k,l+1}(M,A)$ in Section~\ref{sec:constr}. Theorem~\ref{nb}  is proved  in Section~\ref{sec_rmiso}. In Section \ref{sec_rsc}, we compute the relative sign of the map~$g$ and   obtain Proposition \ref{cor_dmor}. The results on the orientability of  $\M_{k,l+1}(M,A)$, including the proofs of Theorem \ref{gc} and Theorem \ref{cor_rs}, are obtained   in Section \ref{sec:or}.  Section~\ref{sec_ogw} is devoted to the definition of the open Gromov-Witten invariants  and the proofs of Theorems \ref{thm_ogw0} and   \ref{thm_ogwk}.     The dependence  of the invariants on the choice of a $\tau$-orienting structure is also discussed in Section \ref{sec_ogw}.\\

The present paper is based on a portion of the author's thesis work completed at Stanford University. The author would like to thank her advisor Eleny Ionel for  her guidance    and encouragement throughout the years. The author would also like to thank  Aleksey Zinger  for many  helpful discussions and for suggesting Example \ref{ex_ntq},
  Erwan Brugall\'e, J\'anos Koll\'ar,  and Jake Solomon for related discussions, and   the referees for their valuable comments.

\section{Preliminaries} \label{transv_sec}

We begin with a description of the real and bordered  moduli spaces used in this paper, adapting the outline in \cite[Section~1]{IP1}. As we only consider the genus zero  case, we   omit the genus from the notation. In the last part of this section, we compare two natural ways of orientating the boundary  of the bordered moduli space at a point and show that the   relation between them depends only on certain topological information.

\subsection{Moduli space of real maps}

  Let  $\widehat{\mathfrak{M}}_n$ denote the space of ordered collections of $n=k+2l\geq3$ marked points on $\CP^1=\C\cup \{\infty\}$. The standard conjugation on $\CP^1$ acts on $\widehat{\mathfrak{M}}_n$. Let $\R\widehat{\mathfrak{M}}_{k,l}$ be the set of   tuples having the first $k$ marked points fixed (real) and each pair $(z_{k+2i-1}, z_{k+2i})$, for $i=1,\dots,l$, invariant under the   conjugation. The order of the $n$-tuple induces orders of the $k$ real points, of the $l$ pairs, and within each pair of conjugate points. Let $\R\mathfrak{M}_{k,l}$ denote the space of equivalence classes of $\R\widehat{\mathfrak{M}}_{k,l}$ modulo the group of real automorphisms  $\aut_\R(\CP^1)$. Its compactification $\overline{\R\mathfrak{M}}_{k,l}$ in the Deligne-Mumford moduli space $\ov{\mk M}_{n}$ consists of equivalence classes of (possibly nodal) stable marked curves $C$, formed as a union of spheres $(\CP^1,j_0)$ attached at nodes and invariant under an involution on~$C$.
  This involution 
    either preserves  a component or interchanges two components.
  An equivalent description of  $\overline{\R\mathfrak{M}}_{k,l}$ in the case   $k>0$ is as the fixed locus of the~$c_{k,l}$ involution on the moduli space $\overline{\mathfrak{M}}_{k+2l}$ given by 
$$
c_{k,l}([j,z_1,\dots, z_{k+2l}])=[-j, z_1,\dots, z_k; (z_{k+2}, z_{k+1}),\dots, (z_{k+2l},z_{k+2l-1})].
$$
 The natural map from the real moduli space of genus $g>0$ marked curves to the fixed locus of the involution on the  complex moduli space is neither injective nor surjective due to the presence of automorphisms; see \cite[Section 4.1]{Liu}, \cite[Section 6.2, 6.3]{Sep}. In genus 0, however, stable curves do not have automorphisms and   the real moduli space is isomorphic to the fixed locus of the involution $c_{k,l}$: each curve in the fixed locus \textsf{has} a \textsf{unique} anti-holomorphic involution compatible with the marked points. The space $\overline{\R\mathfrak{M}}_{k,l}$ coincides with the fixed locus when $k>0$ and is isomorphic to the part of the fixed locus where the involution on the curve has fixed points in the case $k=0$; the part of the fixed locus of~$c_{0,l}$ not covered by $\overline{\R\mathfrak{M}}_{k,l}$ corresponds to curves with real structure without fixed points.\\

 Denote by $\overline{\R\mathcal{U}}_{k,l}$ the universal family   $\overline{\mathfrak{M}}_{k+2l+1}\rightarrow\overline{\mathfrak{M}}_{k+2l}$ restricted to the subspace $\overline{\R\mathfrak{M}}_{k,l}\subset \overline{\mathfrak{M}}_{k+2l}$. We denote by $j_\mathcal{U}$ and $c_\mathcal{U}$ the complex structure and the anti-holomorphic involution on the fibers of $\overline{\R\mathcal{U}}_{k,l}=\overline{\mathfrak{M}}_{k+2l+1}|_{\overline{\R\mathfrak{M}}_{k,l}}\ri  \overline{\R\mathfrak{M}}_{k,l}$ induced by the complex structure on     $\overline{\mathfrak{M}}_{k+2l+1}$ and its anti-holomorphic involution
$$
c_{\mathcal{U}}: [j, z_1,\dots, z_{k+2l+1}])\mapsto[-j, z_1,\dots, z_k; (z_{k+2},z_{k+1}),\dots, (z_{k+2l}, z_{k+2l-1}), z_{k+2l+1}],
$$  
respectively.\\

 A \textsf{real bubble domain $(B, c_B)$ of type $(k,l)$} is a finite union of   oriented $2$-manifolds $B_i$ joined at double points,  with   an orientation-reversing involution $c_B$, with $k$ real and $l$ pairs of conjugate marked points.  Each component $B_i$ is a parametrized sphere with a  standard complex structure $j_0$. The involution $c_B$ either restrict to the standard one on a component or interchanges two components $B_i$ and $B_j$ by the map $z\mapsto \bar{w}$, where $z, w$ are the corresponding coordinates on $B_i, B_j$.
 The components $B_i$, with their special points, are of two types:\\
\hspace*{0.3cm}(a) stable components, and\\
\hspace*{0.3cm}(b) unstable   components which are spheres with at most two special points.
\\
There must be at least one stable component. Collapsing the unstable components to points gives a connected domain $\text{st}(B)$ which is a stable real curve with $k$ real and $l$ pairs of conjugate marked points. Given a real bubble domain $B$, there is   a  unique holomorphic diffeomorphism onto the fiber $\phi_0\co \text{st}(B)\rightarrow \overline{\R\mathcal{U}}_{k,l}$. \\
 
We now define real $(J,\nu)$-holomorphic maps from real bubble domains to a symplectic manifold $(M,\omega)$ with an anti-symplectic involution $\tau$. They depend on a choice of \hbox{$\omega$-compatible} almost complex structure $J$, with $\tau^*J=-J$, and on a real perturbation~$\nu$, chosen from the space of real  sections of the bundle 
\begin{equation}\label{eq_pert}
\Hom (p_2^*(T\overline{\R\mathcal{U}}_{k,l}), p_1^*(TM))\ri M\!\times\! \overline{\R\mathcal{U}}_{k,l},
\end{equation}
 where $p_i$ are the standard projections.

\begin{definition}
A section $\nu$ of the bundle (\ref{eq_pert}) is called a \textsf{real perturbation} if       
$$J\nu=-\nu j_\mathcal{U}\quad \text{and}\quad \nu=d\tau\circ \nu\circ d c_{\mathcal{U}}.
$$
\end{definition}

\noindent
Let \textsf{$\mathcal{J}_\R$} denote the space of pairs $(J,\nu)$ of real perturbations $\nu$ and $\omega$-compatible almost complex structures $J$ on $M$ anti-commuting with $\tau$.   The space of real perturbations is contractible since every section can be symmetrized to satisfy the required conditions. The space of anti-invariant almost complex structures   is non-empty and contractible by the proof of \cite[Proposition~2.63(i)]{MS98}, since it is compatible with the involution~$\tau$. Thus, the space $\mathcal{J}_\R$ is non-empty and contractible.

\begin{definition}\label{def_rjn} Let $(B,c_B)$ be a real bubble domain. 
\begin{enumerate}[label=(\arabic*), leftmargin=*] 
\item A map $u\co B\ri M$ is \textsf{real} if $\tau\circ u\circ c_B = u$.
\item If $(J,\nu)\in \mathcal{J}_\R$, a real map $u\co B\ri M$ is
 \textsf{real $(J,\nu)$-holomorphic } if 
\[ (u,\phi):B \rightarrow M\times \overline{\R\mathcal{U}}_{k,l}\] 
where $\phi = \phi_0 \circ \text{st}$, is a smooth solution of the inhomogeneous Cauchy-Riemann equation 
\[\bp_J u = (u,\phi)^*\nu\]
on each component $B_i$ of $B$.
\end{enumerate}
\end{definition}

In particular, a real $(J,\nu)$-holomorphic map $u\co B\ri M$ is $J $-holomorphic  on each unstable component of~$B$.
A  real $(J,\nu)$-holomorphic map $(u,\phi)$ is called \textsf{stable} if each component $B_i$ of the domain is either stable or its image $u(B_i)$ is nontrivial in $\H_2(M)$.\\

For $A\in \H_2(M)$, let $\mathcal{\R H}^{J,\nu}_{k,l}(M, A)$ denote the set of stable real $(J,\nu)$-holomorphic maps from $(\C \mathbb{P}^1, j_0)$ with $k$ real and $l$ pairs of conjugate   marked points to $M$ with
$[u] = A $. Note that $\mathcal{\R H}^{J,\nu}_{k,l}(M,A)$ is invariant under the  action of $\aut_\R(\CP^1)$: if
$(u, \phi)$ is a real $(J, \nu)$-holomorphic map, then so is $(u\circ \psi, \phi \circ \psi)$ for every   $\psi\in \aut_\R(\CP^1)$.
Similarly, let $\overline{\R\mathcal{H}}^{J,\nu}_{k,l}(M,A)$ be the (larger) set of stable real $(J, \nu)$-holomorphic maps 
from a   real $(k,l)$ bubble domain. Every sequence of real $(J,\nu)$-holomorphic maps from a smooth domain has a sequence that converges modulo   reparametrizations to a stable real $(J,\nu)$-holomorphic map.  
The \textsf{moduli space of real stable maps}, denoted \[\overline{\R\mathfrak{M}}^{J,\nu}_{k,l}(M,A)\qquad \text{or}\qquad\overline{\R\mathfrak{M}}_{k,l}(M,A),\]
is the space of equivalence classes in $\overline{\mathcal{\R H}}^{J,\nu}_{k,l}(M,A)$, where two elements are equivalent if they differ by a real reparametrization.
 As in Section \ref{sec_intro}, let $\overline{\R\mathfrak{M}}_{k,l}^*(M,A)$ denote the    subset of $ \overline{\R\mathfrak{M}}^{J,\nu}_{k,l}(M,A)$ consisting of semi-simple   maps (including from bubble domains). Transversality for simple real $J$-holomorphic maps is described in Section~1 of \cite{Wl} and the method of \cite{RT1, RT2} reduces transversality at stable $(J,\nu)$-holomorphic maps to simple $J$-holomorphic maps. Thus, for  a generic choice of \mbox{$(J,\nu)\in \mathcal{J}_\R$,} the space $\overline{\R\mathfrak{M}}_{k,l}^*(M,A)$ and its strata   are manifolds of expected dimensions.  

\begin{remark} \label{rem_mcr}
  For a generic choice of a real perturbation, all $(J, \nu)$-holomorphic maps from stable domains are  cut transversely. Thus, the complement of $\overline{\R\mathfrak{M}}_{k,l}^*(M,A)$ in $ \overline{\R\mathfrak{M}}^{J,\nu}_{k,l}(M,A)$ consists of multiply covered maps from unstable domains.
  \end{remark}
  
   \subsection{Moduli space of bordered maps}\label{ss_bb}
  
  A \textsf{bordered bubble domain $B$ of type $(k,l)$} is a finite connected union of smooth oriented 2-manifolds $B_i$ joined at double points together with $k$ boundary and $l$ interior marked points, none of which is a double point. Each of the $B_i$ is either a parametrized sphere or a disk with the standard complex structure $j_0$. There must be at least one stable component and the $B_i$, with their special points, fall in    two categories:\\
\hspace*{0.3cm}(a) stable components, and\\
\hspace*{0.3cm}(b) unstable   components which are spheres with at most two special points or disks with at most two boundary  special points.\\

Every bordered $(k,l)$ bubble domain can be doubled to a real $(k,l)$ bubble domain $\hat{B}=B\cup_{\partial B} \bar{B}$, where $\bar{B}$ has the opposite complex structure and the involution $c_{\hat{B}}$ is induced by the identity map $\id\co B\rightarrow\bar{B}$. The boundary   marked points on $B$ become real marked points on $\hat{B}$, and the interior marked points $(z_{k+1},\dots, z_{k+l})$ on $B$ are doubled to the $l$ pairs of conjugate points $((z_{k+i}, c_{\hat{B}}(z_{k+i}))$, with $i=1,\dots,l$.  \\

Every map $u$ from a bordered bubble domain $(B, \partial B)$ to $(M, L)$, where $L$ is the fixed locus of the anti-symplectic involution $\tau$, can be doubled to a real map $\hat{u}\co \hat{B}\rightarrow M$ defined~as 
$$
\hat{u}_{|B}=u\qquad \text{and}\qquad \hat{u}_{|\bar{B}}=\tau\circ u\circ c_{\hat{B}}.
$$
  For $(J,\nu)\in \cJ_\R$,
  a map $u$ from a bordered bubble domain $(B,\partial B)$ to $(M,L)$  is called a \textsf{bordered $(J,\nu)$-holomorphic map}
if the doubled map
\[ (\hat{u},\phi):\hat{B} \rightarrow M\times \overline{\R\mathcal{U}}_{k,l}\] is a real $(J,\nu)$-holomorphic map. 
  A bordered $(J,\nu)$-holomorphic map $(u,\phi)$ is \textsf{stable} if each component $B_i$ of the domain is either stable or its image $u(B_i)$ is nontrivial in     $\H_2(M,L)$.\\
 
For $b \in \H_2(M,L)$, let $\mathcal{H}^{J,\nu}_{k,l}(M,b)$ denote the set of stable bordered $(J,\nu)$-holomorphic maps from $(D^2,j_0)$ with $k$ boundary and $l$ interior marked points to $(M,L)$ with
$[u] = b$. Note that $\mathcal{H}^{J,\nu}_{k,l}(M,b)$ is invariant under the group of conformal automorphisms $G$  of $(D^2,j_0)$: if
$(u, \phi)$ is a bordered $(J, \nu)$-holomorphic  map, then so is $(u \circ \psi, \phi \circ \psi)$ for every   $\psi\in G$.
Similarly, let $\overline{\mathcal{H}}^{J,\nu}_{k,l}(M,b)$ be the (larger) set of stable bordered $(J, \nu)$-holomorphic maps 
from a   bordered $(k,l)$ bubble domain. By \cite[Theorem~3.5]{Fra}, every sequence of bordered $(J,\nu)$-holomorphic maps from a smooth domain has a sequence that converges modulo reparametrization to a stable map.  
The \textsf{moduli space of stable bordered maps}, denoted \[\overline{\mathfrak{M}}^{J,\nu}_{k,l}(M,b)\qquad \text{or}\qquad\overline{\mathfrak{M}}_{k,l}(M,b)\]
is the space of equivalence classes in $\overline{\mathcal{H}}^{J,\nu}_{k,l}(M,b)$, where two elements are equivalent if they differ by a reparametrization.\\

A $J$-holomorphic map $u$ from a bordered bubble domain is called \textsf{$\tau$-simple} if its double~$\hat{u}$ is simple and \textsf{$\tau$-multiply covered} if the double $\hat{u}$ is multiply covered.
   In particular, if a bordered map is not $\tau$-multiply covered, it is $\tau$-simple (this is not the case if~$\tau$ is dropped).  Multiple covers of the disk are $\tau$-multiply covered, but so are maps 
   $$u\!:\!(D^2,\partial D^2) \rightarrow (M,L)\qquad \text{s.t.}\qquad u=\tau\circ u \circ c_{D^2},$$
   where  $ c_{D^2}$ is the standard anti-holomorphic involution on the disk $D^2$;
   see Corollary~\ref{cor_mceqv}.
   Similarly,   a (possibly nodal) bordered $(J, \nu)$-holomorphic map is called \textsf{$\tau$-semi-simple} if its double $\hat{u}$ is semi-simple.  
  Let $\overline{\mathfrak{M}}_{k,l}^*(M,b)$ denote the  subset of $\overline{\mathfrak{M}}^{J,\nu}_{k,l}(M,b)$ consisting  of $\tau$-semi-simple   maps (including from bubble domains). For  a generic choice of $(J,\nu)\in \mathcal{J}_\R$, the space $\overline{\mathfrak{M}}_{k,l}^*(M,b)$ and its strata are   manifolds of expected dimensions, since this is the case for the real space.
For a generic choice of a real perturbation, all $(J, \nu)$-holomorphic maps from stable domains are   cut transversely. Thus,  the complement of $\overline{\mathfrak{M}}_{k,l}^*(M,b)$ in $ \overline{\mathfrak{M}}^{J,\nu}_{k,l}(M,b)$ consists of $\tau$-multiply covered maps from unstable domains.\\

 \subsection{The boundary sign}
In this part, we do not assume  that the Lagrangian submanifold $L\subset M$ is the fixed point locus of an anti-symplectic involution~$\tau$. We assume $J$ is generic.\\
 
For a fibration $F\ri X\overset{\pi}{\ri} B$ there is a canonical isomorphism  
 \begin{equation}\label{eq_fib}
 \Lt X\cong \Lt F\otimes \pi^*\Lt B.
 \end{equation}
 We orient $\mf_{k,l}$ using the canonical orientation on $\mf_{1,1}=$ \{pt\} and the canonical orientations on the fibers of the forgetful maps $\mf_{k,l+1}\ri\mf_{k,l}$ and $\mf_{k+1,l}\ri\mf_{k,l}$. By~(\ref{eq_fib}), there is also an isomorphism 
 \begin{equation}\label{eq_idmf}
 \Lt\mf_{k,l}^*(M,b)\cong \det D\otimes \mathfrak{f}^*\Lt\mf_{k,l},
 \end{equation}
 when $k+l>0$, where $\det D$ is the linearization of $\bp$; see \cite[Section 3.1]{MS98}. If the domain is not stable, we induce an orientation over a point using the fibration $\mf_{k',l'}^*(M,b)\ri\mf_{k,l}^*(M,b)$, for some $k',l'$ such that $k'+l'>0$; see the proof of \cite[Corollary~1.8]{Geo1}. By \cite[Proposition~4.9]{Geo1}, an orientation on $\det D$ over a point $u_0$ is induced by  choices of trivializations of 
 $$\wt{F}=TL\oplus 3\det TL\qquad \text{and}\qquad F^1=\det TL$$
  over $u_0(\prt D^2)$ and $u_0(x_1)$ for some $x_1\in\prt D^2$, respectively. Thus, these choices induce an orientation of $\mf_{k,l}^*(M,b)$ at $u_0$. \\

 We denote by
\begin{equation}\label{eq_fpd}
\mathfrak{M}_{k_1,l_1}(M,b_1)\times_{\fp}  \mathfrak{M}_{k_2,l_2}(M,b_2)\subset \mathfrak{M}_{k_1,l_1}(M,b_1)\times \mathfrak{M}_{k_2,l_2}(M,b_2)
\end{equation}
the subspace of pairs of disk maps with the same value at the last boundary marked point of the first map  and the first boundary marked point of the second map.
 This fiber product appears as a boundary stratum of $ \overline{\mathfrak{M}}_{k,l}(M,b)$, where $b_1+b_2=b$, $l_1+l_2=l$, and $k_1+k_2-2=k$; the node corresponds to the last marked point on each   factor.
  By (\ref{eq_idmf}), there is a canonical isomorphism 
\begin{equation}\label{eq_fp}
\begin{split}
\Lt \big(\mf_{k_1,l_1}^*(M,b_1)&\times_{\fp}\mf_{k_2,l_2}^*(M,b_2)\big)\\
&  \cong \det D_1\otimes \det D_2 \otimes \Lt TL\otimes \mathfrak{f}^*\Lt \mf_{k_1,l_1}\otimes\mathfrak{f}^*\Lt\mf_{k_2,l_2},
\end{split}
\end{equation}
if the domains are stable. 
Orientations of $\det D_1$ and $\det D_2$ at a point $u_1$ of the fiber product are induced by  choices of  trivializations of $\wt F$ over  the image under $u_1$ of the boundary of the domain and of $F^1$ over a point in this image.
If   one of the domains is not stable, the left-hand side of (\ref{eq_fp}) is oriented at $u_1$ by adding an interior marked point to stabilize the domain and then forgetting it; see the proof of \cite[Corollary~1.8]{Geo1}. Thus, the above choices of trivializations determine an orientation of the fiber product at $u_1$, which we call a \textsf{fiber product orientation at $u_1$}.\\

 Let $\gamma=\gamma(t)$ be a path in $\overline{\mf}_{k,l}^*(M,b)$, transversal to the boundary,  with $\gamma(0)=u_0$ and $\gamma(1)=u_1\in\prt\overline{\mf}_{k,l}^*(M,b)$. There is a canonical isomorphism
\begin{equation}\label{eq_ddm}
\Lt\ov{\mf}_{k,l}^*(M,b)_{|\gamma}\cong \det D_{|\gamma}\otimes \mathfrak{f}^*\Lt\ov{\mf}_{k,l |\gamma},
\end{equation}
if the domain of $u_1$ is stable.
 Given an orientation at $u_0$ of  $\ov{\mf}_{k,l}^*(M,b)$, the path~$\gamma$ induces an  orientation at $u_1$ of $\prt \ov{\mf}_{k,l}^*(M,b)$  by 
$$
\Lt\ov{\mf}_{k,l}^*(M,b)_{\gamma(1)}\cong \frac{d\gamma}{dt}|_{t=1}\otimes \prt\ov{\mf}_{k,l}^*(M,b)_{\gamma(1)}.
$$ 
Since  
\begin{align*}
\Lt\ov{\mf}_{k,l}^*(M,b)_{\gamma(1)} &\cong\det D_{\gamma(1)}\otimes  \mathfrak{f}^*\Lt\ov{\mf}_{k,l|\mk{f}(\gamma(1))}\\
&\cong \det D_{\gamma(1)}\otimes \frac{d\gamma}{dt}|_{t=1}\otimes  \mathfrak{f}^*\Lt\prt\ov{\mf}_{k,l|\mk{f}(\gamma(1))},
\end{align*}
\textsf{the boundary orientation at $u_1$} is induced~by
\begin{enumerate}[label=$\bullet$, leftmargin=*]
 \item the orientation on $\det D_{\gamma(1)}$ induced from the orientation on $\det D_{\gamma(0)}$  and
 \item the orientation on the boundary of $\ov{\mf}_{k,l}$ induced from its interior,  twisted by~$(-1)^{\text{ind} D}$.
 \end{enumerate}
 \noindent
If     the domain of $u_1$ is not stable, the left-hand side of (\ref{eq_ddm}) is oriented along $\gamma$ by adding an interior marked point to stabilize all domains and then forgetting it; see the proof of \cite[Corollary~1.8]{Geo1}. Thus, the above 
isomorphisms still determine an orientation of $\prt\ov{\mf}_{k,l}^*(M,b)$ at $u_1$.\\

Given trivializations of $\wt F$ and $F^1$ at $u_0$, we use the path $\gamma$ to transport them to~$u_1$.   If $L$ is not orientable, we assume $k>0$ and use the first boundary marked point to transport the trivialization of $F^1$; if $L$ is orientable, the choice at $u_0$ induces an orientation  of $F^1$ and the assumption on $k$ is not necessary. Thus, these choices of trivializations and the path $\gamma$ induce orientations on the fiber product at $u_1$ and on the boundary of the moduli space  at $u_1$. We call the sign of the inclusion map
$$
\mathfrak{M}_{k_1,l_1}^*(M,b_1)\times_{\fp}  \mathfrak{M}_{k_2,l_2}^*(M,b_2)\hookrightarrow \partial \overline{\mathfrak{M}}_{k,l}^*(M,b), 
$$
at $u_1$
 with respect to these orientations the
 \textsf{  boundary sign at $u_1$} and denote it by~$\eps(u_1)$.
  Suppose we choose different trivializations of $\wt F$ and $F^1$ with differences expressed by $sp\in\pi_1(SO(n+3))$ and $o\in\pi_0(O(1))$. By \cite[Proposition 4.9]{Geo1}, the effect of this change on the orientation of $\det D_{|\gamma(0)}$, and hence on the induced boundary orientation at $u_1$, is  equal to $sp+(\mu(b) +1)o$, where $\mu(b)$ is the Maslov index of $(TM, TL)$ on~$b$. Transporting the new trivializations to $u_1$ induces new trivializations of $\wt F$ on the boundaries of the two bubbles, with differences expressed by $sp_1, sp_2\in \pi_1(SO(n+3))$, and a new trivialization of $F^1$ at the node, with the difference equal to $o$. The change of the orientation on $\det D_1\otimes \det D_2 \otimes F^1_{|\text{node}}$ equals 
  $$sp_1+(\mu(b_1)+1)o+sp_2+(\mu(b_2)+1)o+o=sp+(\mu(b)+1)o,$$
   since $sp=sp_1+sp_2$ and $\mu(b)=\mu(b_1)+\mu(b_2)$. Thus, this change equals the change of the boundary orientation and   the sign between the two is unchanged. Choosing a different path or initial point amounts to similarly changing the trivializations at $u_1$, which induces the same change of the boundary and fiber product orientations. Therefore, the boundary sign at $u_1$ is well-defined.
 
 \begin{lem}\label{lem_sgn} Suppose $L$ is a Lagrangian submanifold of a symplectic manifold $M$ and $u_1$ is an element of the fiber product (\ref{eq_fpd}) for some  $b_i\in \H_2(M,L)$ and \hbox{$k_i,l_i\in\Z^{\geq 0}$} for $i=1,2$.
 The boundary sign $\eps(u_1)$ defined above   depends only on the Maslov indices $\mu(b_1)$ and $\mu(b_2)$, the sign $\eps_{\text{DM}}$ of the inclusion $\mf_{k_1,l_1}\times\mf_{k_2,l_2}\hookrightarrow \prt \ov{\mf}_{k,l}$, and the dimension of $L$.
 \end{lem}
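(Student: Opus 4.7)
The plan is to derive an explicit formula for $\eps(u_1)$ in terms of the four quantities listed; since by the paragraph preceding the lemma $\eps(u_1)$ is already independent of the choices of trivializations of $\wt F$ and $F^1$ and of the path $\gm$, it suffices to compute $\eps(u_1)$ for one convenient choice and show that the result depends only on $\mu(b_1)$, $\mu(b_2)$, $\eps_{\text{DM}}$, and $\dim L$.

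First I would take $\gm$ to be a standard gluing parameter path in a neighborhood of $u_1$, so that along $\gm$ the linearization $D_{\gm(t)}$ is built from the pair $(D_1,D_2)$ and the evaluation at the node by the usual pregluing/linear gluing construction. This produces a canonical isomorphism
\[
\det D_{\gm(1)}\;\cong\; \det D_1\otimes \det D_2\otimes \bigl(\Lt TL|_{\text{node}}\bigr)^{\!*},
\]
well defined up to a sign $\sigma_0$. The key input is \cite[Proposition~4.9]{Geo1}: changing a trivialization of $\wt F$ by $sp_i\in\pi_1(\SO(n+3))$ and of $F^1$ by $o\in\pi_0(O(1))$ changes the orientation on $\det D_i$ by $sp_i+(\mu(b_i)+1)o$. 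The rank of $\wt F$ depends only on $\dim L$ and the rank of $F^1$ is $1$, so $\sigma_0$ is a universal function of $\mu(b_1)$, $\mu(b_2)$, and $\dim L$.

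Next I would trivialize $\wt F$ and $F^1$ at $u_0$, transport them along $\gm$, and restrict to each bubble and to the node. Via (\ref{eq_fp}) this equips the fiber product at $u_1$ with the orientation
\[
o_{\fp}=\det D_1\otimes \det D_2\otimes \Lt TL|_{\text{node}}\otimes \Lt\mf_{k_1,l_1}\otimes \Lt\mf_{k_2,l_2},
\]
where the DM factors carry their canonical orientations. On the other hand the boundary orientation at $u_1$ is computed from (\ref{eq_ddm}) by: (a) orienting $\det D_{\gm(1)}$ via transport of the orientation of $\det D_{\gm(0)}$; (b) orienting $\Lt\prt\ov{\mf}_{k,l}|_{\mk f(\gm(1))}$ from the interior orientation of $\ov{\mf}_{k,l}$, twisted by $(-1)^{\ind D}$. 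Feeding the gluing isomorphism above into (a), and the definition of $\eps_{\text{DM}}$ into (b), the boundary orientation becomes
\[
o_{\prt}= \eps_{\text{DM}}\cdot (-1)^{\ind D}\cdot \sigma_0\cdot o_{\fp},
\]
so that
\[
\eps(u_1)=\eps_{\text{DM}}\cdot(-1)^{\ind D}\cdot \sigma_0\bigl(\mu(b_1),\mu(b_2),\dim L\bigr).
\]
Finally, $\ind D\equiv \mu(b_1)+\mu(b_2)+\dim L\pmod 2$ by the Riemann--Roch formula for the real Cauchy--Riemann operator on a disk, so the exponent is determined by the same data. This exhibits $\eps(u_1)$ as a function of $\mu(b_1)$, $\mu(b_2)$, $\eps_{\text{DM}}$, and $\dim L$ only.

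The main obstacle is pinning down the universal sign $\sigma_0$: this is a local model computation comparing the orientation on $\det D_{\gm(1)}$ obtained by transport along $\gm$ with the one obtained by splitting at the node and inserting the $\Lt TL$ factor. I would reduce to a constant-map model on a standard nodal disk, where the comparison is read off directly from the parity formula $sp_i+(\mu(b_i)+1)o$ of \cite[Proposition~4.9]{Geo1}, with the $o$-contribution coming exactly from the trivialization of $F^1$ at the node. Once $\sigma_0$ is identified as a function of the three stated parities, the remaining bookkeeping is the well-defined framework already set up above.
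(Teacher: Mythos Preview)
Your decomposition into a Deligne--Mumford piece and a determinant-gluing piece is exactly the paper's. The issue is the step where you assert that $\sigma_0$ is a universal function of $\mu(b_1),\mu(b_2),\dim L$. The sentence ``The rank of $\wt F$ depends only on $\dim L$ and the rank of $F^1$ is $1$, so $\sigma_0$ is a universal function\dots'' does not follow: the change-of-trivialization formula from \cite[Proposition~4.9]{Geo1} shows $\eps(u_1)$ is independent of the trivializations (already established in the paragraph before the lemma), but it says nothing about why two \emph{different} bundle pairs $(u_1^*TM,u_1^*TL)$ over the nodal disk with the same Maslov indices yield the same gluing sign. That is the whole content of the lemma.

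The paper closes this gap by a naturality argument rather than a model computation: the gluing isomorphism
\[
\det D_1\otimes\det D_2\otimes\Lt TL\;\cong\;\det D_{\gamma(1-\rho)}
\]
commutes with isomorphisms of bundle pairs; the way the orienting trivializations on the nodal side are inherited from those on the smooth side commutes with such isomorphisms; and the procedure of orienting a determinant from a trivialization commutes with such isomorphisms (citing \cite[Definition~C.3.4]{MS98} and \cite[Propositions~3.1,~3.2]{Geo1}). Since bundle pairs over the nodal disk are classified up to isomorphism by the pair of Maslov indices (together with the rank, i.e.~$\dim L$), the sign of the gluing isomorphism depends only on these. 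This is precisely the ``reduction to a model'' you place at the end, but it \emph{is} the argument: once you know the sign depends only on the isomorphism class, you are done with the lemma and never need to evaluate $\sigma_0$ on any particular model. Your proposed constant-map computation would produce an explicit formula, which is more than the lemma asks for and is not what the paper does here.
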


 \begin{proof}
 With notation as before, there is a canonical (up to homotopy) isomorphism
 \begin{equation*}
 \begin{split}
 \frac{d\gamma}{dt}|_{t=1} \otimes \det D_1\otimes \det D_2 \otimes \Lt TL\otimes \mathfrak{f}^*\Lt \mf_{k_1,l_1}&\otimes\mathfrak{f}^*\Lt\mf_{k_2,l_2}\\ 
 &\cong \det D_{|\gamma(1-\rho)}\otimes \mathfrak{f}^*\Lt\ov{\mf}_{k,l |\gamma(1-\rho)},
 \end{split}
 \end{equation*}
 with $\rho\in[0,1]$ and the top exterior products on the left-hand side   taken over ~$u_1$.  This isomorphism is the tensor product of the isomorphisms
\begin{align}
\notag \frac{d\gamma}{dt}|_{t=1} \otimes\Lt \mf_{k_1,l_1}\otimes\Lt\mf_{k_2,l_2}&\cong  \Lt\ov{\mf}_{k,l |\gamma(1-\rho)}\quad
\text{and}\\ 
\label{eq_detgl} \det D_1\otimes \det D_2 \otimes \Lt TL&\cong \det D_{\gamma(1-\rho)}
\end{align}
corresponding to the inclusion $\mf_{k_1,l_1}\times\mf_{k_2,l_2}\hookrightarrow \prt \ov{\mf}_{k,l}$
and the gluing of real Cauchy-Riemann operators as in \cite[Section 3.2]{EES} and \cite[Section 4.1]{WW}, respectively.\\

Thus, we need to show that the sign of the   isomorphism (\ref{eq_detgl}) with respect to the orientations induced by
trivializations as above depends only on $\mu(b_1)$ and~$\mu(b_2)$.
This isomorphism commutes with isomorphisms (over the identity) of bundle pairs  as defined in \cite[Definition C.3.4]{MS98}. The   trivializations of $\wt F$ and $F^1$ at $u_1$, which are used  to orient $\det D_1$ and $\det D_2$, are determined by transporting the  trivializations of $\wt F$ and $F^1$ at~$u_{1-\rho}$,    used to orient $\det D_{\gamma(1-\rho)}$;
   this transport also commutes with fiberwise isomorphism of bundle pairs along the path.
Finally, the procedure of orienting the determinant bundle (on a smooth domain) using  trivializations of $\wt F$ and $F^1$ commutes with   isomorphisms of bundle pairs as well; see the proofs of   \cite[Propositions 3.1, 3.2]{Geo1}.
   Thus, the sign of the isomorphism (\ref{eq_detgl})   depends only on the isomorphism class of the bundle at $u_1$, which depends only on the Maslov indices $\mu(b_1)$ and $\mu(b_2)$. 
 \end{proof}
  
\section{Construction of $\M_{k,l+1} (M,A)$}\label{sec:constr}
 
In this section, 
 we   construct a new moduli space $\M_{k,l+1}(M,A)$ assuming 
  there is at least one interior marked point~$z_0$.
  The construction is carried by gluing together several copies of decorated versions of $\overline{\mathfrak{M}}_{k,l+1}(M,b)$. The decorations on the interior marked points  are   used to provide an order within the conjugate pairs on the double.\\

For $b\in\H_2(M,L)$, let    $\bar{b}=-\tau_*(b)$.
 Let $\o\co\H_2(M,L)\ri\H_2(M)$ be the homomorphism described as follows. With the choice~(\ref{eq_hlg}), we simply take the double of the bordered map as described in Section \ref{transv_sec}. 
With the choice~(\ref{eq_htg}), we define $\o(b)\in H_2(M;\mathbb{Q})$ for $b\in H_2(M,L;\mathbb{Q})$ to be the unique element    which maps to $b-\tau_*(b)$ under the homomorphism $$j_*\co H_2(M;\Q)\rightarrow H_2(M,L;\Q)$$ and satisfies $\tau_*(\o(b))=-\o(b)$. Over $\mathbb{Z}$, this element is not unique and the difference between every two such elements  is torsion; this is the reason we take $\Q$ instead of $\Z$ coefficients.    We observe that $\o(b)=\o(\bar{b})$.

   Let 
 $$
 \widetilde{\mathfrak{M}}_{k,l+1}(M,b)=\overline{\mathfrak{M}}_{k,l+1}(M,b)\times \mathbb{Z}_2^{l}
 $$
  be the moduli space of $(J,\nu)$-holomorphic disk maps which represent a fixed class $b\in \H_2(M,L)$ and have a \textsf{decoration} of $+$ or $-$ on the interior marked points, with the decoration of  $z_0$ being always $+$. This decoration is used to define a 1-1 correspondence between disks with \textsf{decorated} marked points and spheres with ordered pairs of conjugate points.
 Let  
 $$\mathcal{M}_{k,l+1}(M,A)=\displaystyle{\coprod_{\o(b)=A}}\widetilde{\mathfrak{M}}_{k,l+1}(M,b)$$
 be the disjoint union of decorated moduli spaces of disk maps, representing a class $b$ whose image under $\o$ is a fixed class $A\in \H_2(M)$.  \\

In  Section \ref{ss_bb}, we described a doubling construction for marked bordered maps which we now modify  for    decorated bordered maps. The construction of the doubled map $\hat{u}$, the doubled domain $\hat{B}$, and the real marked points is as before, but an  interior marked point $z_i$ with decoration $+$ is now mapped to the pair $\mk p (z_i^+)=(z_i, c_{\hat{B}}(z_i))$, while an interior marked point $z_j$ with decoration $-$ is mapped to the pair $\mk p(z_j^-)=(c_{\hat{B}}(z_j), z_j)$. 
\begin{figure}[htb]
\begin{center}
\leavevmode
\includegraphics[width=0.6\textwidth]{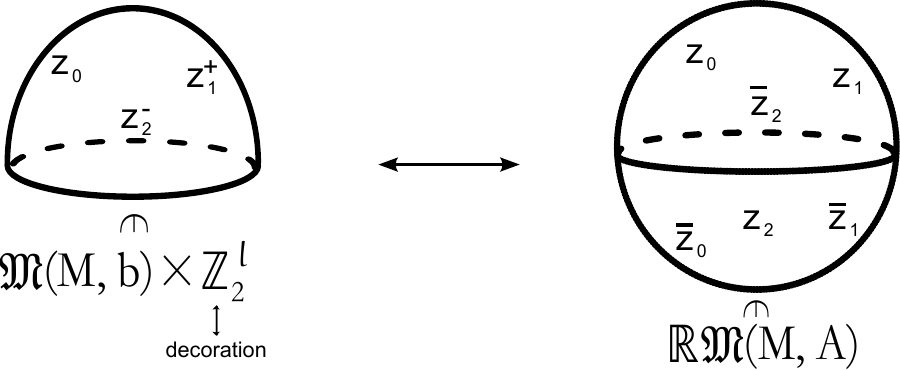}
\end{center}
 \end{figure}

For each bordered bubble domain $B$, denote by $B'$ the  bordered bubble domain obtained from~$B$ by replacing each nodal point $z_{i,j}$ on each component $B_i$ by $\bar z_{i,j}$, but identifying the same pairs of components as in~$B$;   recall that each $B_i$ is either the standard sphere or the standard disk and here $\bar z$ denotes the conjugate of $z$ with respect to the standard conjugation on the sphere or the disk.
Let  $\fc_{B}\co B\rightarrow B'$ be the   anti-holomorphic
   map
  sending $z\in B_i$ to $\bar z\in B_i'=B_i$.
     If $u\co(B,\prt B)\rightarrow (M,L)$, $x_1,\dots, x_k\in\prt B$, and $z_1,\dots, z_l\in B-\prt B$,
 the correspondence 
 $$
 \mathfrak{c}: (B, u, {\bf x},  z_1^\pm,\dots, z_l^\pm)\mapsto (B',\tau\circ u\circ \fc_{B'}, \fc_{B}({\bf x}), \fc_{B}( z_1)^\mp,\dots,  \fc_{B}(z_l)^\mp)
 $$
  is called \textsf{conjugation}. Note that it reverses the decorations.

 \begin{lem}\label{lem_dbl} 
 If $(B,u, {\bf x},  z_1^\pm,\dots, z_l^\pm)$ is a bordered $(J,\nu)$-holomorphic   map with   image $[u]=b\in \H_2(M,L)$, then 
 $(B',\tau\circ u\circ \fc_{B'}, \fc_{B}({\bf x}), \fc_{B}( z_1)^\mp,\dots,  \fc_{B}(z_l)^\mp)$ is a $(J,\nu)$-holomorphic map with image $[\tau\circ u\circ \fc_{B'}]=\bar{b}\in \H_2(M,L)$.
 \end{lem}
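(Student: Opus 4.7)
The plan is to verify three things about the map $u' := \tau \circ u \circ c_{B'} : B' \to M$: (i) its boundary maps to $L$; (ii) it satisfies the perturbed Cauchy--Riemann equation on each component, so that it is bordered $(J,\nu)$-holomorphic; and (iii) its class in $\H_2(M,L)$ equals $\bar b = -\tau_*(b)$. Stability of the resulting map will then follow automatically. The three ingredients powering the argument are: $\tau^\ast J = -J$, the anti-holomorphicity of $c_{B'}$, and the reality conditions on the perturbation $\nu = d\tau\circ\nu\circ dc_{\mathcal{U}}$ and $J\nu = -\nu j_{\mathcal{U}}$.

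For (i), observe that $\partial B'$ coincides with $\partial B$ as a subset of the underlying real manifold, and each boundary point is fixed by $c_{B'}$. Thus for $x \in \partial B'$ we have $u'(x) = \tau(u(x))$, and since $u(x) \in L = M^\tau$ we get $u'(x) \in L$. For (ii), I would first do the chain-rule computation on each smooth component: using $d\tau \circ J = -J \circ d\tau$ and $dc_{B'} \circ j_0 = -j_0 \circ dc_{B'}$, a routine calculation gives
\begin{equation*}
\bp_J(\tau\circ u\circ c_{B'}) \;=\; d\tau \circ \bp_J u \circ dc_{B'}.
\end{equation*}
Substituting the equation $\bp_J u = (u,\phi)^\ast\nu$ and using the reality $\nu = d\tau\circ \nu\circ dc_{\mathcal{U}}$ (noting that the forgetful map $\phi'$ for the conjugated decorated map is $c_{\mathcal{U}}\circ\phi\circ c_{B'}$, because swapping decorations on the interior marked points swaps the two members of each conjugate pair on the double, which is exactly the action of $c_{\mathcal{U}}$ on $\overline{\R\mathcal{U}}_{k,l}$), one verifies that the right-hand side agrees with $(u',\phi')^\ast\nu$. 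Alternatively, and more cleanly, I would note that conjugation is an isomorphism of real bubble domains $\hat{B'}\cong\hat{B}$ that swaps the two halves, and under this isomorphism $\hat{u'}$ coincides with $\hat u$; since $\hat u$ is real $(J,\nu)$-holomorphic by hypothesis (this is the definition of bordered $(J,\nu)$-holomorphicity for $u$), so is $\hat{u'}$, and hence $u'$ is bordered $(J,\nu)$-holomorphic.

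For (iii), the class of $u\circ c_{B'}$ in $\H_2(M,L)$ equals $-b$ because $c_{B'}$ is orientation-reversing on each two-dimensional component (anti-holomorphic on each $B_i$), and the class of a bordered map is read off from the fundamental class of the (oriented) domain pair. Applying $\tau_\ast$ yields $[u'] = -\tau_\ast(b) = \bar b$. This argument works equally well for both the $\mathbb{Q}$-homological and the $\pi_2$-versions of $\H_2(M,L)$ in (\ref{eq_htg}) and (\ref{eq_hlg}), since in both settings the class depends only on the underlying bordered map up to the equivalence used to define $\H_2(M,L)$. Stability of $u'$ is inherited from stability of $u$ because the components of $B'$ are in bijection with those of $B$, an unstable component of $B'$ still has the corresponding component of $B$ unstable, and the image $u'(B_i') = \tau(u(B_i))$ represents a nontrivial class in $\H_2(M,L)$ if and only if $u(B_i)$ does.

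The only genuinely delicate point is the book-keeping in the first route to (ii), where one must make sure that $\phi'$ really is $c_{\mathcal{U}}\circ \phi\circ c_{B'}$ when the decorations are reversed; this hinges on how the decorated-to-real correspondence is set up in $\mathfrak{p}$ and requires tracking the order within each conjugate pair. The doubling argument circumvents this issue entirely by replacing it with a single observation about the symmetry $\tau\circ \hat u\circ c_{\hat B} = \hat u$, so I would present that as the main line and use the direct chain rule only as a consistency check.
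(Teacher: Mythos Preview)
Your proposal is correct, and your ``second route'' (the doubling argument) is exactly the paper's approach: the paper makes the holomorphic reparametrization explicit as $\psi=(c_B\circ c_{\hat B})^{-1}$, checks that $\phi'=\phi\circ\psi$ and $\hat u'=\hat u\circ\psi$, and then pulls back the equation $\bp_J\hat u=(\hat u,\phi)^*\nu$ by $\psi$. Your identification $\phi'=c_{\mathcal U}\circ\phi\circ c_{B'}$ in the first route is in fact the same map, since $\phi$ is real ($\phi\circ c_{\hat B}=c_{\mathcal U}\circ\phi$) and $\psi=c_{\hat B}\circ c_{B'}$.

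The main difference is that you also supply a direct chain-rule verification on the bordered domain and explicitly check the boundary condition, the homology-class identity $[u']=-\tau_*b$, and stability; the paper's proof treats these as evident and only writes out the $(J,\nu)$-holomorphicity. Your first route is a legitimate alternative and, once the book-keeping for $\phi'$ is done as you describe, is equivalent in content to the doubling argument; the doubling presentation simply packages the use of the reality condition on~$\nu$ into the single observation that reparametrizing a real $(J,\nu)$-holomorphic map by a holomorphic marked-curve isomorphism yields another one.
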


 \begin{proof} Let $u'=\tau\circ u\circ \fc_{B'}$ and denote by $\fc_{\hat{B}}\co \hat B\ri \hat B'$ the extension of the involution~$\fc_B$ defined  by 
 $$c_{\hat B'}\circ \fc_{\hat B}= \fc_{\hat B}\circ c_{\hat B}\equiv \psi^{-1}.
 $$  
 In particular, $\psi^{-1}(\mk p(z_i^\pm))= \mk p (\fc_B(z_i)^\mp)$. 
 If 
 $$
 \phi=\phi_0\circ \text{st}\co \big(\hat B,{\bf x}, \mk p (z_1^\pm), \dots, \mk p(z_l^\pm)\big)\ri \mathcal{U}_{k,l}
 $$
 is the holomorphic map onto a fiber as in Definition \ref{def_rjn}, then so is 
  $$
 \phi'\equiv \phi\circ \psi \co \big(\hat B',\fc_{B}({\bf x}), \mk p (\fc_{B}(z_1)^\mp\big), \dots, \mk p( \fc_{B}(z_l)^\mp))\ri \mathcal{U}_{k,l}.
 $$
  Since
 \begin{align*}\hat{u}'&= u'=\tau\circ u \circ \fc_{B'}= \tau \circ u \circ c_{\hat B}\circ c_{\hat B}\circ \fc_{ B'}= \hat u \circ \psi &  \text{on}\,  B', \\
 \hat{u}'&=\tau\circ u'\circ c_{\hat{B}'}= u \circ \fc_{B'}\circ c_{\hat B'}= \hat u \circ \psi & \text{on}\,  \bar B',
   \end{align*}
   $\hat u' = \hat u \circ \psi$. Using $(\hat u', \phi')=(\hat u, \phi)\circ \psi$, we find that 
   $$
   \bp_J \hat u ' = \psi^*\bp_J \hat u = \psi^* (u, \phi)^*\nu= (\hat u', \phi')^*\nu.
   $$
   Thus, $u'$ is $(J,\nu)$-holomorphic. 
\end{proof}

\begin{cor}\label{cor_mceqv}
If $u\co(D^2,\partial D^2)\rightarrow (M,L)$ satisfies $u\circ \varphi=\tau\circ u\circ c_{D^2}$ for some holomorphic $\varphi\co D^2\ri D^2$, then      $u$ is $\tau$-multiply covered.
\end{cor}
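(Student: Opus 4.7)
The plan is to double the equation $u\circ\varphi=\tau\circ u\circ c_{D^2}$ from $D^2$ to the doubled sphere $\hat B=S^2$ and interpret the doubled identity as saying that $\hat u$ is invariant under a non-trivial holomorphic self-map of $S^2$, thereby exhibiting $\hat u$ as multiply covered.

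First I would extend $\varphi\colon D^2\to D^2$ to a holomorphic map $\hat\varphi\colon S^2\to S^2$ by Schwarz reflection, setting $\hat\varphi|_{\bar B}=c_{\hat B}\circ\varphi\circ c_{\hat B}$; the hypothesis forces $\varphi(\partial D^2)\subset\partial D^2$, so the two pieces match along the gluing circle. Similarly, extend $c_{D^2}$ to an anti-holomorphic involution $\hat c_{D^2}$ of $S^2$ by $c_{\hat B}$-conjugation on $\bar B$. A direct computation on each hemisphere, combined with Lemma~\ref{lem_dbl}, yields the two identities
\[\widehat{u\circ\varphi}=\hat u\circ\hat\varphi\qquad\text{and}\qquad \widehat{\tau\circ u\circ c_{D^2}}=\hat u\circ\psi,\]
where $\psi=c_{\hat B}\circ\hat c_{D^2}\colon S^2\to S^2$ is holomorphic (a composition of two anti-holomorphic involutions). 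The second identity uses the reality of $\hat u$, namely $\tau\circ\hat u=\hat u\circ c_{\hat B}$. In the standard model $S^2=\C\cup\{\infty\}$ with $c_{\hat B}(z)=1/\bar z$ and $c_{D^2}(z)=\bar z$, one finds $\psi(z)=1/z$, an involution swapping $B$ and $\bar B$. Combining the two identities with the hypothesis produces $\hat u\circ\hat\varphi=\hat u\circ\psi$, i.e.\ $\hat u\circ\chi=\hat u$ for $\chi=\hat\varphi\circ\psi^{-1}\colon S^2\to S^2$.

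The crucial observation is that $\chi$ is not the identity: if $\chi=\id$ then $\hat\varphi=\psi$, but $\psi(0)=\infty\in\bar B$ while $\hat\varphi(0)=\varphi(0)\in B$, a contradiction. Once $\chi\neq\id$, the conclusion is classical. The symplectic area identity $\omega\cdot[\hat u]=\deg(\chi)\cdot\omega\cdot[\hat u]$, combined with positivity of energy for the non-constant pseudo-holomorphic sphere $\hat u$, forces $\deg\chi=1$, so $\chi$ is a M\"obius transformation. The stabilizer of a non-constant pseudo-holomorphic sphere in $\mathrm{PSL}(2,\C)$ is finite, so $\langle\chi\rangle$ is a finite cyclic subgroup of some order $n\geq 2$, and $\hat u$ factors through the branched cover $S^2\to S^2/\langle\chi\rangle\cong S^2$ of degree $n$. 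Hence $\hat u$ is multiply covered, i.e.\ $u$ is $\tau$-multiply covered. The main technical obstacle is the bookkeeping of the three involutions $c_{D^2}$, $c_{\hat B}$, $\hat c_{D^2}$ and the explicit identification of $\psi$ as $z\mapsto 1/z$, which is what allows the comparison at $z=0$ that shows $\chi\neq\id$; the deduction of multiple covering from invariance under a non-trivial holomorphic self-map is standard.
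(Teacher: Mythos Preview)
Your argument is correct and follows the same strategy as the paper: double the relation to obtain $\hat u\circ\hat\varphi=\hat u\circ\psi$ on $S^2$, observe that $\hat\varphi\neq\psi$, and conclude that $\hat u$ is multiply covered. The only difference is in how $\hat\varphi\neq\psi$ is verified --- the paper notes that $\hat\varphi$ is orientation-preserving on the real circle while $\psi$ is orientation-reversing, whereas you evaluate at $0$ and use $\hat\varphi(0)\in B$ versus $\psi(0)=\infty\in\bar B$; your final paragraph spelling out the passage from a non-trivial holomorphic symmetry to multiple covering is more explicit than the paper's one-line conclusion, but the substance is identical.
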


\begin{proof}
By the proof of  Lemma \ref{lem_dbl}, the double of $\tau\circ u \circ c_{D^2}$ equals $\hat u\circ \psi$, where $\psi$ is orientation-reversing on the real locus; the double of $u\circ \varphi$ is $\hat u\circ \hat \varphi$, where $\hat \varphi$ is orientation preserving on the real locus. Thus, $\hat u$ is multiply covered.
\end{proof}

\begin{remark} In contrast to \cite[Section 4]{FOOO2} and \cite[Section 5]{Sol}, we do not require $\nu$
  to be equivariant with respect to $c_{D^2}$ and $\tau$, that is $\nu\neq\text{d}\tau\circ \nu\circ \text{d}c_{D^2}$ in our applications; thus, $u\neq \tau\circ u\circ c_{D^2}$ for $u\in \mk M_{k,l+1}(M,b)$. This is necessary in order to obtain transversality over the real moduli space.   Lemma~\ref{lem_dbl} asserts that the {\it doubles} of  $u$ and       $\tau\circ u\circ c_{D^2}$ are $(J,\nu)$-holomorphic for the same real pairs $(J,\nu)$.
\end{remark}
 
We define an involution
$g$ on the compactification of the strata of $\mathcal{M}_{k,l+1}(M,A)$ comprised of maps from a bubble domain having a single boundary node by
 \begin{gather*}
g: \widetilde{\mathfrak{M}}_{k_1,l_1+1}(M,b_1)\times_{fp} \widetilde{\mathfrak{M}}_{k_2,l_2}(M,b_2)\rightarrow \widetilde{\mathfrak{M}}_{k_1,l_1+1}(M,b_1)\!\times_{fp}
\widetilde{\mathfrak{M}}_{k_2,l_2}(M,\bar{b}_2)\\
[(u_1,\vec{x}_1,\vec{z}_1;\sigma_1),(u_2,\vec{x}_2,\vec{z}_2;\sigma_2)]\mapsto [(u_1,\vec{x}_1,\vec{z}_1;\sigma_1),(\tau\circ u_2\circ \fc_{B'}, \fc_{B}(\vec{x}_2), \fc_{B}(\vec{z}_2), 
\bar{\sigma}_2)],
\end{gather*} 
where $\sigma_i\in \Z_2^{l_i}$ records the decorations, $\bar{\sigma}_i$ is the reverse decoration of $\sigma_i$, and \hbox{$\fc_B\co B\ri B'$} is the    anti-holomorphic map defined above. By Lemma~\ref{lem_dbl}, the image of $g$ is indeed an element of $\mathcal{M}_{k,l+1}(M,A)$. The subscript $l_1+1$ is used to specify the bubble carrying    the preferred interior marked point $z_0$. The point $z_0$ is used to define an order on the two disks using which we define the map $g$ unambiguously.  More intuitively, the map~$g$ is given by identity on the first factor and by the conjugation~$\mk c$ on the second, with first and second   determined by $z_0$.
Let
$$
\widetilde{\mathcal{M}}_{k,l+1}(M,A)=\mathcal{M}_{k,l+1}(M,A)/_{u \sim g(u) }
$$
be   the space obtained by identifying the corners of the moduli spaces $\widetilde{\mathfrak{M}}_{k,l+1}(M,b)$ using the map $g$.
  In particular, on the compactification of the  strata with $p$ boundary nodes,  we  can order    the factors of the fiber product, with the first one containing $z_0$,      to define a map $g^p_i$ by $\mathfrak{c}$ on the $i$-th component and identity on the rest, for $i=2,
\dots, p$. If $u$ belongs to such a stratum, $u\sim g_i^p(u)$  for all $i=2,\dots, p$; this is independent of the ordering of the factors. A similar construction for $M=\CP^4, k=0, $ and $l=-1$ is described in detail in the proof of  \cite[Proposition 11]{psw}.\\

\begin{figure}[htb]
\begin{center}
\leavevmode
\includegraphics[width=0.8\textwidth]{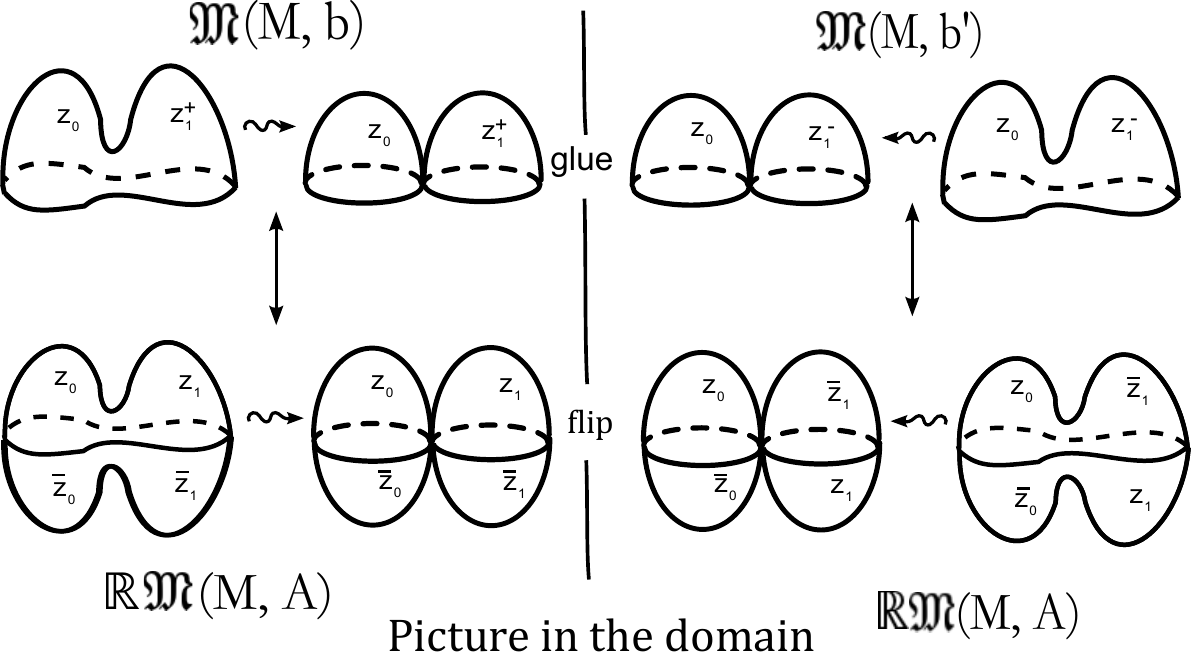}
\end{center}
 \end{figure}

  By taking the target manifold $M$ to be a point in the above construction, we obtain an analogue  of the Deligne-Mumford moduli space, which we denote by $\widetilde{\mathcal{M}}_{k,l+1}$.  There is  again a  forgetful map 
 $$\mk{f}: \widetilde{\mathcal{M}}_{k,l+1}(M,A)\rightarrow \widetilde{\mathcal{M}}_{k,l+1}$$ given by forgetting  the maps and stabilizing the domains. The moduli space $\widetilde{\mathcal{M}}_{k,l+1}$ has no boundary provided there is at least one boundary marked point. Indeed, since  all bubbles are stable, each of them must have either at least $3$ boundary marked points, on the order of which $g$   acts nontrivially, or a boundary marked point and an interior marked point, and $g$   acts nontrivially on the decoration of the interior point.
 If $k=0$, the moduli space of disk domains has a boundary component consisting of marked spheres on which $g$ is not defined;   thus, $\M_{0,l+1}$ has boundary in this case as well. 
 
 \section{Proof of Theorem \ref{nb}}\label{sec_rmiso}

\begin{proof}[\bf \emph{Proof of   \ref{it_evm}}]
    The evaluation maps at a boundary marked point $x_i$ fit together to form an evaluation map on $\M _{k,l+1}(M,A)$, since 
    $$u(x_i)= \tau\circ u\circ \fc_{B'} (\fc_B(x_i)).
    $$
    This does not hold at an interior marked point, since $\tau$ does not fix the image. We account for this by   using the decorations of the interior marked points and define a new evaluation map~by
     \[\widetilde{\ev}_{z_i}(u)=\begin{cases}
u(z_i),& \text{if the decoration of $z_i$ is $+$},\\
\tau\circ u(z_i),& \text{if the decoration of $z_i$ is $-$. }
\end{cases}\]
Since $g$ changes the decoration, the maps $\widetilde{\ev}_{z_i}$ fit together and form an evaluation map at the interior marked points  $\widetilde{\ev}_{z_i}:\M _{k,l+1}(M,A)\rightarrow M$.
\end{proof}

\begin{proof}[\bf \emph{Proof of   \ref{it_nb}}]
 The moduli space $\widetilde{\mathfrak{M}}^*_{k,l+1}(M,b)$    has two possible types of boundary   - one having two disk components on which $g$ is defined and one having a single sphere component on which $g$ is not   defined. The latter is avoided by  requiring that either there be at least one boundary marked point or that  all classes which the boundary of the disk represents in  $\pi_1(L)$ be nonzero. Thus, to prove the statement, we need to show that
the map $g$ has no fixed points on the codimension one strata. Indeed, if $[u_2]=[\tau\circ u_2\circ c_{D^2}]$, then by Corollary \ref{cor_mceqv}, $u_2$ is $\tau$-multiply covered and thus $u=(u_1,u_2)$ is not an element of $\widetilde{\mathcal{M}}_{k,l+1}^*(M,A)$. \end{proof}
 
  \begin{proof}[\bf \emph{Proof of \ref{it_iso}}] 
 By definition, a bordered map  $u$ is $(J,\nu)$-holomorphic if its double $\hat{u}$ is $(J,\nu)$-holomorphic. Along with the doubling construction before Lemma~\ref{lem_dbl}, this defines a map
  $$
  \coprod_{\o(b)=A}\overline{\mathcal{H}}_{k,l+1}(M, b)\times \Z_2^l\rightarrow \overline{\mathcal{H}}_{k,l+1}(M, A).
  $$
   Since every reparametrization  of the bordered domain defines a real reparametrization of the doubled domain, this map descends to a map 
   $$\coprod_{\o(b)=A}\widetilde{\mathfrak{M}}_{k,l+1}(M,b)\rightarrow \overline{\R\mathfrak{M}}_{k,l+1}(M, A).
   $$
 By the proof of Lemma \ref{lem_dbl}, the classes of the doubles of 
 $$[u,{\bf x},{\bf z},\sigma]\qquad \text{and} \qquad[\tau\circ u \circ \fc_{B'}, \fc_{B}({\bf x}),\fc_{B}({\bf z}),\bar{\sigma})]$$
  are equal in $\overline{\R\mathfrak{M}}_{k,l}(M, A)$, as they differ by the real reparametrization $\psi=c_{\hat B}\circ \fc_{\hat B'}$. 
 Therefore,  the last map further descends to a continuous map 
   \begin{align*} \sD: \widetilde{\mathcal{M}}_{k,l+1}(M,A)&\rightarrow \overline{\mathbb{R}\mathfrak{M}}_{k,l+1}(M,A) \\
  \big[u, B, {\bf x}, z_0^+, z_1^\pm\dots, z_l^\pm\big]&\mapsto \big[\hat{u},\hat{B}, {\bf x}, \mk p(z_0^+),\mk p(z_1^\pm)),\dots ,  \mk p (z_l^\pm))\big].
  \end{align*}
 We show that   $\sD$ is both surjective and injective.\\
\\
  {\bf Surjectivity.} Let $[\hat u,{\bf x}, (z_0, c_{\hat B}(z_0)),\dots, (z_l, c_{\hat B}(z_l))]$  be an element of the real moduli space $\ov{\R\mf}_{k,l+1}(M,A)$. Pick a representative $\hat u: \hat B\ri M$ and choose a half $B\subset \hat B$ such that $\hat B=B\coprod \bar{B}$ and the first element $z_0$ of the $0$-th conjugate pair of interior marked points belongs to $B$. Then, 
  $$
\sD( [\hat u_{|B}, {\bf x}, r(z_0),\dots, r(z_l)])=[\hat u, {\bf x}, (z_0, c_{\hat B}(z_0)),\dots, (z_l, c_{\hat B}(z_l))],
$$
where
  $r(z_j)$ denotes the correspondence
  \[r(z_j)=\begin{cases}(z_j,+),&\text{if $z_j\in B\subset \hat{B}$},\\ (c_{\hat{B}}(z_j),-),&\text{if $z_j\in \hat{B}-B$}.\end{cases}
 \]
 \\
 {\bf Injectivity.}   Let 
 $$[u_i, {\bf x}_i, z_{i;0}^+, z_{i;1}^\pm,\dots z_{i;l}^\pm ] \quad \text{for} ~i=1,2$$ be two elements of $\M_{k,l+1}(M,A)$ with representatives $u_i\co (B_i,\prt B_i)\ri(M,L)$ and doubles 
 $$(\hat{u}_i,{\bf x}_i, \mk p (z_{i;0}^+),\mk p( z_{i;1}^\pm),\dots \mk p(z_{i;l}^\pm) ).
 $$
  Suppose there is a real holomorphic diffeomorphism $h\co \hat{B}_1\ri\hat{B}_2$ such that $h({\bf x_1})={\bf x_2}$, $h(\mk p(z_{1;j}^\pm))=\mk p( z_{2;j}^\pm)$, and $\hat{u}_1=\hat{u}_2\circ h$. Suppose the boundary of $B_i$ consists of $p$ circles joined at nodes. Then, $B_i=(B_{i;1},\dots, B_{i;p})$, where each $B_{i;m}$ is a bubble domain with a single disk component,    $z_{i;0}\in B_{i;1}$, and $h(\prt B_{1;m})=\prt B_{2;m}$. The real locus of $\hat B_i$ is oriented by the induced orientation of $\prt B_i\subset B_i\subset \hat B_i$. Thus, $h$ maps $B_{1;1}$ to $B_{2;1}$ and  is orientation-preserving on $\prt B_{1;1}$. If $h$ is orientation-reversing on $\prt B_{1;m}$, we replace $u_2$ with  a different representative of $[u_2]$, namely 
  $$u'_2=(u_{2;1},\dots, u_{2;m-1}, \mk c (u_{2;m}),u_{2;m+1},\dots, u_{2;p}),
  $$
   with the corresponding changes of the marked points. As in the proof of Lemma~\ref{lem_dbl}, 
   $$\hat u_2=\hat u_2'\circ (\id,\dots,\id, \fc_{\hat{B}_{2;m}}\circ c_{\hat B_{2;m}},\id,\dots,\id)$$
   and thus $\hat u_1= \hat u'_2\circ h'$,
   where
   $$h'=(\id,\dots,\id, \fc_{\hat{B}_{2;m}}\circ c_{\hat B_{2;m}},\id,\dots,\id)\circ h 
   $$ is a real holomorphic diffeomorphism with  $h'(\prt B_{1;m})\ri\prt B_{2;m}$  orientation-preserving. Thus, we may assume that $h$ is orientation-preserving on the boundary, mapping \hbox{$B_1\subset \hat B_1$} to $B_2\subset \hat B_2$. Therefore, $h_{|B_1}$ relates $u_1$ and $u_2$ and thus 
    $$[u_1, {\bf x}_1, z_{1;0}^+, z_{1;1}^\pm,\dots z_{1;l}^\pm ]  =[u_2, {\bf x}_2, z_{2;0}^+, z_{2;1}^\pm,\dots z_{2;l}^\pm ] .$$
    This concludes the proof.
  \end{proof}
  
\begin{cor} \label{wsph} The map $\widetilde{\mathcal{M}}_{k,l+1}\rightarrow \overline{\mathbb{R}\mathfrak{M}}_{k,l+1}$ given by  
$$
 \big[C, {\bf x}, z_0^+, z_1^\pm\dots, z_l^\pm\big]\mapsto \big[\hat{C}, {\bf x}, \mk p(z_0^+),\mk p(z_1^\pm)),\dots ,  \mk p (z_l^\pm))\big]
 $$
 is  a homeomorphism.  
 \end{cor}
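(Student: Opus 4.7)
The plan is to derive this corollary as the special case of Theorem~\ref{nb}\ref{it_iso} in which the target symplectic manifold is a point. With $M=\text{pt}$ (and hence $L=\text{pt}$ with trivial involution), every pseudo-holomorphic map is constant and stability of the map becomes stability of its domain. Consequently $\H_2(M,L)=0$, and the only class contributing to the disjoint union defining $\mathcal{M}_{k,l+1}(M,0)$ is $b=0$; the decorated disk moduli space $\widetilde{\mathfrak{M}}_{k,l+1}(\text{pt},0)$ reduces to $\overline{\mathfrak{M}}_{k,l+1}\times\Z_2^l$. Similarly, $\overline{\R\mathfrak{M}}_{k,l+1}(\text{pt},0)=\overline{\R\mathfrak{M}}_{k,l+1}$. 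Hence $\widetilde{\mathcal{M}}_{k,l+1}(\text{pt},0)=\widetilde{\mathcal{M}}_{k,l+1}$ by the very definition in Section~\ref{sec:constr}, and under these identifications the isomorphism $\sD$ of Theorem~\ref{nb}\ref{it_iso} becomes precisely the map in the statement of the corollary.

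The key observation enabling this reduction is that both the doubling construction preceding Lemma~\ref{lem_dbl} and the conjugation $\mathfrak{c}$ used to glue decorated disks are purely combinatorial/topological operations on domains; the presence of the map $u$ plays no role beyond providing the compatibility checks which are automatic when $u$ is constant. In particular, the definition of $\sD$ simplifies to the correspondence of decorated marked bordered curves with real marked curves stated in the corollary, so no additional content is needed.

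For the verification, I would simply invoke Theorem~\ref{nb}\ref{it_iso} directly once the above identifications are made. Alternatively, one can repeat the two short arguments from the proof of \ref{it_iso}: \emph{surjectivity} by choosing a preferred half $B\subset\hat{B}$ containing the first point of the $0$-th conjugate pair and decorating each interior marked point according to whether it lies in $B$ or in $\bar{B}$; \emph{injectivity} by noting that a real biholomorphism $h\co\hat{C}_1\to\hat{C}_2$ intertwining the marked data either already sends the chosen half of $\hat{C}_1$ to the chosen half of $\hat{C}_2$ or can be composed component-by-component with the anti-holomorphic involutions $c_{B}$ on bubbles where it reverses orientation of the real locus, yielding a biholomorphism of the preferred halves. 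Since no map enters, both arguments are in fact easier than in Theorem~\ref{nb}. There is no genuine obstacle here; the only point requiring any care is checking that the equivalence relation defining $\widetilde{\mathcal{M}}_{k,l+1}$ matches the change-of-half ambiguity in the doubling correspondence, which is exactly what the modification of the representative in the injectivity argument addresses.
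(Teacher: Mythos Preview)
Your proposal is correct and is exactly the approach the paper intends: the corollary is the specialization of Theorem~\ref{nb}\ref{it_iso} to $M=\text{pt}$, using the definitions $\widetilde{\mathcal{M}}_{k,l+1}=\widetilde{\mathcal{M}}_{k,l+1}(\text{pt},0)$ from Section~\ref{sec:constr} and $\overline{\R\mathfrak{M}}_{k,l+1}=\overline{\R\mathfrak{M}}_{k,l+1}(\text{pt},0)$ from Section~\ref{transv_sec}. The paper offers no separate proof precisely because nothing beyond this identification is needed.
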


\begin{cor} There is a commutative diagram 
\[\begin{CD}
\M_{k,l+1}(M,A)
@>\ev_{x_i}\times \widetilde{\ev}_{z_j}\times \tau\circ \widetilde{\ev}_{z_j}>> L\times M\times M \\
@V\sD V\cong V @VV \id V\\
\overline{\R\mf}_{k,l+1}(M,A) @> \ev_{x_i}\times \ev_{z_j}\times \ev_{\bar{z}_j}>> L\times M\times M.
\end{CD}\] \end{cor}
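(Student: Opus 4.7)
The plan is to check commutativity of the diagram marked-point by marked-point, using the explicit formulas for $\sD$, $\mk p$, and the doubled map $\hat u$ established in Section~\ref{sec_rmiso}. Since $\sD$ is already known to be a well-defined isomorphism by Theorem~\ref{nb}\ref{it_iso}, the content is purely the compatibility of the evaluation maps. I would verify separately that the first factor $\ev_{x_i}$, the second factor $\widetilde{\ev}_{z_j}$, and the third factor $\tau\circ \widetilde{\ev}_{z_j}$ agree with their counterparts after applying $\sD$, for both possible decorations $\pm$.

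For a boundary marked point $x_i \in \prt B$, we have $\hat u(x_i)=u(x_i)\in L$ by construction of the doubled map, so $\ev_{x_i}\circ \sD = \ev_{x_i}$. For an interior marked point $z_j$ with decoration $+$, the pair is $\mk p(z_j^+)=(z_j,c_{\hat B}(z_j))$, and the double satisfies $\hat u(z_j)=u(z_j)$ on $B$ and $\hat u(c_{\hat B}(z_j))=\tau\circ u(z_j)$ on $\bar B$. Thus
\[
\ev_{z_j}\circ \sD([u,\dots]) = u(z_j) = \widetilde{\ev}_{z_j}([u,\dots]),
\qquad
\ev_{\bar z_j}\circ \sD([u,\dots]) = \tau\circ u(z_j) = \tau\circ \widetilde{\ev}_{z_j}([u,\dots]).
\]
For a $-$ decoration, the pair becomes $\mk p(z_j^-)=(c_{\hat B}(z_j),z_j)$, so $\ev_{z_j}\circ\sD$ reads off $\hat u(c_{\hat B}(z_j))=\tau\circ u(z_j)=\widetilde{\ev}_{z_j}([u,\dots])$, while $\ev_{\bar z_j}\circ\sD$ reads off $\hat u(z_j)=u(z_j) = \tau\circ (\tau\circ u(z_j))=\tau\circ\widetilde{\ev}_{z_j}([u,\dots])$, using $\tau^2=\id$. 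In every case the required equality holds.

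These two cases together cover all decoration possibilities, and since $\sD$ is well-defined on equivalence classes (independent of the choice of half $B\subset \hat B$, by the argument in the proof of Theorem~\ref{nb}\ref{it_iso}), the verification descends to the quotients. Moreover, the doubling and the definition of $\widetilde{\ev}_{z_j}$ are arranged precisely to make the involution~$g$ respect the evaluations, as already noted in the proof of Theorem~\ref{nb}\ref{it_evm}; the present corollary is the formal statement of that compatibility. There is no substantive obstacle—the only mild point is keeping track of which element of the conjugate pair $\mk p(z_j^\pm)$ is the ``first'' one, which is exactly the bookkeeping the decorations were introduced to handle. Thus the diagram commutes, completing the proof.
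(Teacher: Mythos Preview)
Your verification is correct. The paper states this corollary with no proof at all, treating it as an immediate consequence of the definitions of $\sD$, $\mk p$, $\hat u$, and $\widetilde{\ev}_{z_j}$ established earlier in Sections~\ref{sec:constr} and~\ref{sec_rmiso}; your case-by-case check (boundary point, interior point with $+$ decoration, interior point with $-$ decoration) is exactly the routine unpacking of those definitions that the paper leaves to the reader.
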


\section{The relative sign of conjugation}\label{sec_rsc}

In order to study the orientability of the new moduli space $\M_{k,l+1}(M,A)$, we need   to know the (relative) sign of the isomorphism $g$ defined on the boundary of the moduli spaces $\widetilde{\mf}_{k,l+1}(M,b)$. Here we define and compute the \textsf{relative sign} of $g$. We use the canonical isomorphism 
\begin{equation}
    \label{eq_fpidm}
\Lt(\mf_{k_1,l_1+1}(M,b_1)\times_{fp}\mf_{k_2,l_2}(M,b_2))\cong\det(D)\otimes\Lt(\mf_{k_1,l_1+1}\times\mf_{k_2,l_2})
\end{equation}
 to separate the contributions from the determinant bundle of the linearization of $\bp$ and from the domain and compute the sign of each part in Lemmas \ref{lem_det} and   \ref{lem_dm}. We obtain the relative sign of $g$ 
 in Corollary \ref{cor_ms}.
   Lemma~\ref{lem_det} and Corollary \ref{cor_ms} are parallel to \cite[Theorem 1.3]{FOOO2} and \cite[Propositions 2.12, 5.1]{Sol} but we do not assume the existence of a relatively spin or pin structure on $L$. \\

 Given the bubble domain $B=D_1^2\coprod D_2^2/_{ \{1\}\sim \{-1\}}$ with a single boundary node,   a bundle pair $(E,F)\ri(B,\prt B)$, and a real Cauchy-Riemann operator $D^{(E,F)}$, there is a canonical isomorphism
\begin{equation}\label{eq_bub}
\det(D^{(E,F)})\cong \det(D_1^{(E,F)})\otimes \det(D_2^{(E,F)}) \otimes (\Lt F)^{\spcheck}_{|\text{node}} ,
\end{equation}
where $D_i^{(E,F)}$ is the operator on the $i$-th bubble component; see \cite[Section 2]{Zin}.  
Suppose $(E',F')\ri (B,\prt B)$ is another bundle pair such that
$$
(E',F')_{|D^2_1}=(E,F)_{|D^2_1}\qquad \text{and} \qquad (E',F')_{|D^2_2}\overset{\wt c}{\ri}(E,F)_{|D^2_2},
$$
where $\wt c$ is an anti-complex linear isomorphism covering $c_{D^2_2}$. For example,   given a map $u=(u_1,u_2)\co (B,\prt B)\ri (M,L)$, we can take
$$
(E,F)=(u_1,u_2)^*(TM,TL)\qquad\text{and}\qquad (E',F')=(u_1, \tau\circ u_2\circ c_{D^2_2})^*(TM,TL);
$$
the map $\wt c = u_2^*\text{d} \tau$ in this case.
The bundle isomorphism, to which we refer as \textsf{conjugation},
 \begin{gather*}\label{eq_cmap}
\begin{CD}
(E,F)    @>\id\coprod \wt c>>  (E',F')\\
@VVV        @VVV\\
(B,\prt B)     @>\id\coprod c_{D^2_2}>>  (B,\prt B)
\end{CD}
\end{gather*}
induces an isomorphism 
\begin{equation}\label{eq_det}
i_{(E,F)}:\det{D^{(E,F)}}\cong \det D^{( E', F')},
\end{equation}
where $D^{( E', F')}$ is the real Cauchy-Riemann operator on $(E',F')$ induced by~$D^{( E, F)}$. \\

 Given a complex bundle $(W, J_E)\ri (M, J)$ with a conjugation $\wt\tau$ covering $\tau$, the lines $\det \bar{\partial}^{(W,W^{\wt\tau})}$ form a line bundle over $\widetilde{\mf}_{k,l+1}^*(M,b)$; see \cite[Section~3.2]{EES} or \cite[Appendix~D.4]{Hua}. Using the map $i_{(W,W^{\wt\tau})}$ we then obtain a line bundle
 \begin{equation}\label{eq_glbdl2}
 \det\bp^{(W,W^{\wt\tau})}_{/i_{(W,W^{\wt\tau})}}\ri \M_{k,l+1}^*(M,A)
 \end{equation}
over the glued moduli space.\\

 By \cite[Proposition  4.9]{Geo1},  given a bundle pair $(E,F)\rightarrow (D^2, \partial D^2)$ and a real Cauchy-Riemann operator $D^{(E,F)}$, an orientation of $\det D^{(E,F)}$  is determined by a choice of trivializations of $F\oplus 3\det F$ over   $\partial D^2$ and of $\det F$ over  a point   $x_1\in \partial D^2$. Thus, a choice    of trivializations of $F\oplus 3\det F$ over   $\partial B$ and of $\det F$ over  the node    induces orientations on $\det(D_1^{(E,F)}), \det(D_2^{(E,F)})$, and $F^{\spcheck}_{|\text{node}}$  and hence on $\det(D^{(E,F)})$.
The push-forwards of these choices of trivializations by $\id\coprod c_{D^2_2}$ similarly induce an orientation on $\det D^{ (E', F')}$. \\

The \textsf{relative sign of $i_{(E,F)}$} is the sign of the isomorphism (\ref{eq_det}) when the two sides are oriented  using   trivializations of $F\oplus 3\det F$ over   $\partial B$ and of $\det F$ over  the node and their push-forwards by $\id\coprod c_{D^2_2}$.  By Lemma~\ref{lem_det}, this sign is independent of the choices of   trivializations and in fact depends only on the Maslov index $\mu(E_{| D^2_2},F_{|\prt D^2_2})$ of $(E_{| D^2_2},F_{|\prt D^2_2})$.
  Denote   by $\wt{w}_1(F_{|\prt D^2})$ the first Stiefel-Whitney class of $F$ evaluated on~$\prt D^2$ and considered as an integer, i.e. either 0 or 1.
 
\begin{lem} \label{lem_det}
The relative sign of $i_{(E,F)}$ equals $(-1)^\eps$, where
\[\eps = \frac{\mu(E_{| D^2_2},F_{|\prt D^2_2})+\wt{w}_1(F_{|\prt D^2_2})}{2}.\]
\end{lem}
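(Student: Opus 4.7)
The plan is to reduce the computation of the relative sign to explicit model line bundle pairs, exploiting the fact that $\det\bp$ and its orientation from \cite[Proposition~4.9]{Geo1} behave well under direct sums. The starting observation is that every bundle pair over $(D^2,\prt D^2)$ is isomorphic to a direct sum of line bundle pairs, each classified by its Maslov index $\mu\in\Z$; and that both sides of the asserted formula are additive modulo $2$ under direct sum, since $\mu$ is additive and $\wt w_1$ is additive modulo $2$. Hence it suffices to establish the formula on a single line bundle pair.

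First I would check that the relative sign depends only on the topological type of $(E_{|D^2_2},F_{|\prt D^2_2})$, i.e.\ is independent of the choices of trivializations of $F\oplus 3\det F$ on $\prt B$ and of $\det F$ at the node. By \cite[Proposition~4.9]{Geo1}, changing the base trivializations by $(sp,o)\in\pi_1(SO(n+3))\times\pi_0(O(1))$ shifts the orientation of $\det D^{(E,F)}$ by $sp+(\mu+1)o\pmod 2$. The push-forward of the trivializations through $\id\coprod c_{D^2_2}$ shifts the orientation of $\det D^{(E',F')}$ by the image of this shift, and the key point is that this image agrees with the original shift modulo~$2$. This uses that $c_{D^2_2}$ reverses the orientation of $\prt D^2_2$, so $c_*$ acts as the identity on $\pi_1(SO(n+3))\cong\Z/2$, that $\mu(E',F')\equiv\mu(E,F)\pmod 2$, and that the node is fixed by $c_{D^2_2}$. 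Additivity under direct sums then follows from the compatibility of (\ref{eq_bub}) with splittings of bundle pairs and from the corresponding compatibility of the orientation construction in \cite[Proposition~4.9]{Geo1}.

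For the base case of a line bundle pair $(L,\lambda)\to(D^2,\prt D^2)$ of Maslov index $\mu$, I would take a standard model, e.g.\ $L=D^2\times\C$ with $\lambda_z=z^{\mu/2}\R$ for $z\in\prt D^2$ (interpreted as a Mobius twist when $\mu$ is odd), together with the standard $\bp$-operator. Explicit bases of $\ker\bp^{(L,\lambda)}$ and $\coker\bp^{(L,\lambda)}$ can be written in terms of monomials, and the anti-complex-linear conjugation $\wt c\co f\mapsto\overline{f\circ c_{D^2}}$ acts on these bases by a signed permutation that can be read off from the exponents. Comparing the resulting orientation of $\det D^{(L,\lambda)}$ with the one prescribed by the pushed-forward trivializations of $\lambda\oplus 3\lambda$ on $\prt D^2$ and of $\lambda$ at a distinguished boundary point gives the sign $(-1)^{(\mu+\wt w_1)/2}$ in each case: a factor of $(-1)^{\mu/2}$ comes from the action of $\wt c$ on the Cauchy--Riemann side, and the residual factor of $(-1)^{\wt w_1/2}$ comes from the trivialization of $\det F$ at the distinguished point in the Mobius case, where the point and its image under $c_{D^2}$ lie in opposite sheets of the orientation double cover of $\lambda$.

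The main technical obstacle is the careful bookkeeping in the base-case computation: one must track the interaction of the auxiliary $3\det F$ summand and of the trivialization of $\det F$ at the distinguished boundary point with the involution $\wt c$, and then reconcile the resulting sign with the index-theoretic contribution from the action of $\wt c$ on $\ker\bp$ and $\coker\bp$. The formula $\eps=(\mu+\wt w_1)/2$ emerges as the sum of the $\mu/2$ term from the Cauchy--Riemann side and the $\wt w_1/2$ correction from the orientation trivializations, and additivity under direct sums of line pairs then propagates the identity to all bundle pairs.
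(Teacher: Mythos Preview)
Your reduction strategy has a genuine gap: the formula $\eps=\frac{\mu+\wt w_1}{2}$ is \emph{not} additive modulo~$2$ under direct sums. Take two line bundle pairs each with $\mu=1$, $\wt w_1=1$; then $\eps_1=\eps_2=1$, so $\eps_1+\eps_2\equiv 0$, while the direct sum has $\mu=2$ and $\wt w_1=0$, giving $\eps=1$. The failure traces back to the fact that $\wt w_1$ is treated as an integer in $\{0,1\}$, not as an element of $\Z/2$: the ``carry'' when two odd summands combine is lost on the right-hand side. Correspondingly, the orientation convention of \cite[Proposition~4.9]{Geo1} does not split over direct sums either, since it uses trivializations of $F\oplus 3\det F$ and of $\det F$, and for $F=F_1\oplus F_2$ one has $\det F=\det F_1\otimes\det F_2$, which is not $\det F_1\oplus\det F_2$. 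So neither side of the asserted identity factors through a splitting into line bundle pairs in the way you need. Relatedly, your proposed decomposition $\eps=\mu/2+\wt w_1/2$ in the base case is not meaningful when $\mu$ is odd, as neither summand is an integer.

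The paper's proof avoids all of this by never splitting $(E,F)$. Instead it works directly with the stabilization isomorphism
\[
\det D_2^{(E,F)}\otimes\det\bp^{(4E^1,4F^1)}\cong\det\bp^{(\wt E,\wt F)}\otimes\det\bp^{(E^1,F^1)},
\]
which is exactly how the orientation in \cite[Proposition~4.9]{Geo1} is built, and computes the sign of conjugation on each of the three auxiliary factors via a pinching construction (disk $\leadsto$ disk~$\cup$~sphere). The $(\wt E,\wt F)$ and $(4E^1,4F^1)$ factors contribute trivially, and the entire sign comes from the rank-one $(E^1,F^1)$ factor, where the pinching pushes $\mu-\wt w_1$ units of Maslov index onto the sphere and the remaining $\wt w_1$ stays on the disk; the contributions recombine to $(-1)^{(\mu+\wt w_1)/2}$. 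If you want to salvage a line-bundle approach, you would need to redo the computation for $(E^1,F^1)=(\det_\C E,\det F)$ only, which is essentially what the paper does.
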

\begin{proof}
Since the map $i_{(E,F)}$ is identity on the first bubble, the map factors through the isomorphism (\ref{eq_bub}) with the only nontrivial factor being the middle one. Thus, the relative sign of $i_{(E,F)}$ is the sign of the conjugation on the second bubble.
Let
$$
 (E^1,F^1)=({\det}_{\mathbb{C}}E_{| D^2_2},  \det F_{|\prt D^2_2})\quad\text{and}\quad(\wt{E},\wt{F})=(E_{| D^2_2} \oplus 3E^1, F_{|\prt D^2_2}\oplus 3 F^1).
$$
The isomorphism $\wt c$ induces anti-complex linear isomorphisms
$$
\wt c:E^1\ri E'^1, \quad \wt c: 4E^1\ri 4E'^1, \quad \wt c:\wt E\ri \wt E'.
$$
We induce an orientation on $\det(D_2^{(E,F)})$ by orienting $\det(\bp^{(\wt{E},\wt{F})})$, $\det(\bp^{(E^1,F^1)})$, and $\det(\bp^{(4E^1,4F^1)})$ using the isomorphism 
\begin{equation}\label{eq_t11}
\det (D_2^{(E,F)})\otimes \det(\bp^{(4E^1,4F^1)})\cong \det(\bp^{(\wt{E},\wt{F})})\otimes\det(\bp^{(E^1,F^1)});
\end{equation}
see the proof of \cite[Theorem 1.1]{Geo1}.
  We determine the sign of the isomorphism on $\det (D_2^{(E,F)})$ by determining it on  each of the remaining factors in~(\ref{eq_t11}). \\
 
 We orient $\det(\bp^{(\wt{E},\wt{F})})$ and $\det(\bp^{(\wt{E}',\wt{F}')})$ by trivializing 
 $$
 (\wt{E}_{|U},\wt{F})\overset{\phi}{\cong} (\C^{n+3}\times U,\R^{n+3}\times \prt D^2_2)\quad\text{and}\quad  (\wt{E}_{|U}',\wt{F}')\overset{\psi}{\cong} (\C^{n+3}\times U,\R^{n+3}\times \prt D^2_2)
 $$
  in a neighbourhood $U$ of $\prt D^2_2$,  with $\psi\circ \wt c\circ\phi^{-1}=(c_{\C^{n+3}}, c_{D^2_2})$, via the given trivialization of~$\wt{F}$  and pinching the disk $D^2_2$ to obtain a sphere attached to a disk at a point~$p$. 
  We can pinch along a circle which is invariant under $c_{D^2_2}$ so that $c_{D^2_2}$ induces the standard anti-holomorphic involutions on the pinched off disk and sphere. 
  Thus, the conjugation map factors through the isomorphism 
  $$
  \det(\bp^{(\wt{E},\wt{F})})\cong \det (\bp_{D^2})\otimes \det{\bp_{S^2}}\otimes (\Lt_\R \wt E)_{|p}^{\spcheck}.
  $$ 
  The indices over the disks are isomorphic to  $\ind(\bp^{(\mathbb{C}^{n+3},\mathbb{R}^{n+3})})\cong \mathbb{R}^{n+3}$ by evaluation at the node and $c_{\C^{n+3}}$ acts as identity on it. The orientation on the determinant line on the sphere is induced from the canonical complex orientations on the kernel and cokernel. 
  The complex dimension of the index on the sphere is $c_1(\wt{E})\cdot S^2+n+3$.
     Since $\wt c$ is anti-complex linear, the sign of the conjugation map on the sphere component is $(-1)^{c_1(\wt {E})\cdot S^2+n+3}$. Since    
     $$
     c_1(\wt{E})\cdot S^2=\frac{\mu(\wt{E},\wt{F})}{2}=2\mu(E_{|D^2_2},F_{|\prt D^2_2}),$$ 
 there is no contribution to the sign   coming from the Chern class. 
 Finally, $\wt c$ acts on the canonical orientation of the  incident condition $\wt{E}_p^{\spcheck}$ with a sign   $(-1)^{n+3}$, since the complex dimension of $\wt E$ is $n+3$. Combining the three contributions gives zero and thus the conjugation map is orientation-preserving on the $\det(\bp^{(\wt{E},\wt{F})})$ factor. Similarly, the map is orientation-preserving on the $\det(\bp^{(4E^1,4F^1)})$ factor, where we use the canonical spin structure on $4F^1$ to orient. \\
 
  Similarly, we orient $\det(\bp^{(E^1,F^1)})$ by  choosing an isomorphism 
  $$
 ({E^1}_{|U},{F^1})\overset{\phi}{\cong} (\C\times U, e^{\frac{i\theta \wt{w}_1(F^1)}{2}}\R\times \prt D^2_2) 
 $$
  in a neighbourhood $U$ of $\prt D^2_2$,  with $\psi\circ \wt c\circ\phi^{-1}=(c_{\C}, c_{D^2_2})$ using the given trivialization of~$F^1_{|\text{node}}$  and pinching the disk $D^2_2$ to obtain a sphere attached to a disk at a point~$p$. The Maslov index on the pinched off disk is then equal to $\wt{w}_1(F^1)$ and the Chern class on the sphere is equal to $\tfrac{1}{2}(\mu(E^1,F^1)-\wt{w}_1(F^1))$; see the proof of \cite[Proposition 3.2]{Geo1}.
     The determinant line of the operator on the disk  is isomorphic to $ \det(F^1)_{|\text{node}}^{\wt{w}_1(F^1)+1}$. This isomorphism depends on the orientation of $\partial D^2$ if and only if   $\wt w_1(F^1) =1$. We use this orientation  to determine the direction along which we transport $F^1_{|x_2}$ to $F^1_{|\text{node}}$;   reversing the orientation of~$\partial D^2$  reverses the orientation of the determinant line. Since  $c_{D^2_2}$ reverses the orientation of~$\partial D^2_2$ and $\wt c$  preserves the orientation of $\det(F^1)_{|\text{node}}$, the  contribution to the sign  is $(-1)^{\wt{w}_1(F^1)}$. As above,  the contribution to the sign of $i_{(E,F)}$ from the incident condition is $(-1)$ and from  the sphere  is  $(-1)$ to the power of the index, which is 
  $$c_1(\det E^1)
\cdot(S^2)+1=\frac{\mu(E^1,F^1)-\wt{w}_1(F^1)}{2}+1.$$
 Thus, the overall sign of the conjugation map is 
 $$(-1)^{\wt{w}_1(F^1)+1+\frac{\mu(E^1,F1)-\wt{w}_1(F^1)}{2}+1}=(-1)^{\frac{\mu(E^1,F^1)+\wt{w}_1(F^1)}{2}},$$
 as claimed.
\end{proof}

\begin{remark} \label{rem_indor}For the applications in the next section, it is useful to compute the relative sign of $i_{(E,F)}$ given trivializations of $F\oplus 3\det{F}$ over $\prt B$ and a trivialization of $\det F$ over some point $x_1\in\prt B$ different from the node. We can still orient the two sides of~(\ref{eq_det}): the left side is oriented by transporting the trivialization of $\det F_{|x_1}$ to $\det F_{|\text{node}}$ using the positive direction of $\prt B$ and the right side is oriented by pushing forward the given trivializations and transporting $\det F'_{|\id\coprod c_{D_2^2}(x_1)}$ to $\det F'_{|\text{node}}$ using the positive direction of~$\prt B$.     This does not change  the sign of $i_{(E,F)}$ if  $F$ is orientable, but  there is an additional contribution, described in the following corollary, if $F$ is not orientable.
 \end{remark}

 \begin{cor}\label{cor_det} If the two sides of (\ref{eq_det}) are oriented as in Remark \ref{rem_indor} using trivializations of $F\oplus 3\det{F}$ over $\prt B$ and   $\det F_{|x_1}$, $x_1\neq$ node, then the sign of $i_{(E,F)}$ is $(-1)^\eps$, where
 \[
 \eps  = \frac{\mu((E_{| D^2_2},F_{|\prt D^2_2}))+\wt{w}_1(F_{|\prt D^2_2})}{2} + \delta_{x_1}\wt w_1(F_{|\prt D^2_1})\wt w_1(F_{|\prt D^2_2}),
 \] with $\delta_{x_1}=1$ if $x_1\in \prt D^2_2$ and zero otherwise.
 \end{cor}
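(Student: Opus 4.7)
The plan is to reduce the corollary to Lemma~\ref{lem_det} by computing how moving the trivialization base point from the node to $x_1 \in \partial B$ changes the induced orientations on each side of~(\ref{eq_det}). By the convention of \cite[Proposition~4.9]{Geo1}, a trivialization of $\det F$ at a boundary point and its positive-direction transport along $\partial B$ to any other base point induce the same orientation on $\det D^{(E,F)}$. Let $\sigma_L \in \det F_{|\text{node}}$ denote the positive transport of $\tau_{x_1}$ along the component of $\partial B$ containing $x_1$, and let $\sigma_R \in \det F'_{|\text{node}}$ denote the positive transport of the pushforward $\wt c(\tau_{x_1})$ along the same component of $\partial B$ in $F'$. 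Thus the LHS and RHS of~(\ref{eq_det}) are oriented, in the sense of the corollary, by $\sigma_L$ and $\sigma_R$ at the node, respectively.

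If $x_1 \in \partial D^2_1$, the pushforward acts as the identity on the first disk, so $\sigma_R = \wt c(\sigma_L)$ and the setup coincides with that of Lemma~\ref{lem_det}, giving $\eps = \eps_0$ in accordance with $\delta_{x_1}=0$. If $x_1 \in \partial D^2_2$, the key observation is that $c_{D^2_2}$ reverses the orientation of $\partial D^2_2$, so positive transport in $F'$ from $c_{D^2_2}(x_1)$ to the node corresponds, under the bundle map $\wt c$, to negative transport in $F$ from $x_1$ to the node. These two transports in $F$ differ by a full loop around $\partial D^2_2$, whence
\[
\sigma_R = (-1)^{\wt w_1(F_{|\partial D^2_2})}\, \wt c(\sigma_L)
\]
at the node.

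Applying Lemma~\ref{lem_det} with $\sigma_L$ on the LHS and $\wt c(\sigma_L)$ on the RHS yields the base sign $(-1)^{\eps_0}$. Replacing $\wt c(\sigma_L)$ by $\sigma_R$ on the RHS flips the trivialization of $\det F'$ at the node precisely when $\wt w_1(F_{|\partial D^2_2})=1$; by \cite[Proposition~4.9]{Geo1} applied through the decomposition~(\ref{eq_bub}), such a flip multiplies the orientation of $\det D^{(E',F')}$ by $(-1)^{\mu(b_1) + \mu(\bar b_2) + 1} = (-1)^{\mu(b)+1}$, since the Maslov index is preserved under conjugating the complex structure and pulling back by an orientation-reversing map. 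Using the standard congruence $\mu(b_i) \equiv \wt w_1(F_{|\partial D^2_i}) \pmod 2$ together with $x^2 \equiv x \pmod 2$, a short computation rewrites $(\mu(b)+1)\,\wt w_1(F_{|\partial D^2_2})$ modulo $2$ as $\wt w_1(F_{|\partial D^2_1})\,\wt w_1(F_{|\partial D^2_2})$, which is the claimed correction term. The main subtlety is the careful bookkeeping of transport directions on the two sides under the bundle map $\wt c$; once this relation between the connections on $F$ and $F'$ is in place, the mod-$2$ arithmetic is routine.
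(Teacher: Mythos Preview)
Your proof is correct and follows essentially the same route as the paper: reduce to Lemma~\ref{lem_det} by transporting the trivialization of $\det F_{|x_1}$ to the node, observe that the pushforward under $c_{D^2_2}$ reverses the transport direction on $\partial D^2_2$, and compute the effect of the resulting flip of the node trivialization on the orientation of $\det D^{(E',F')}$. The only minor difference is in this last step: you invoke the aggregate change-of-trivialization formula $(\mu(b)+1)o$ (established in the discussion preceding Lemma~\ref{lem_sgn}) and then simplify $(\mu(b)+1)\wt w_1(F_{|\partial D^2_2})$ mod~$2$, whereas the paper tracks the contributions from $\det D_1^{(E',F')}$, $\det D_2^{(E',F')}$, and the node factor separately via the explicit description $\det(\bar\partial_i^{(E'^1,F'^1)})\cong(\det F'_{|\text{node}})^{\wt w_1+1}$ from the proof of Lemma~\ref{lem_det}; the two computations are equivalent.
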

 
 \begin{proof} We orient the left side of (\ref{eq_det}) by transporting the   trivialization of $\det F_{|x_1}$ to $\det F_{|\text{node}}$. If we orient the right side with these choices of trivializations over $\prt B$ and the node, then the sign is given by Lemma \ref{lem_det}. However, the right hand side is oriented by transporting $\det( F')_{|\id\coprod c_{D^2_2}(x_1)}$ to the node using the positive orientation of $\prt B$.
   We consider two cases:
\begin{itemize}
\item $x_1$ is on the first bubble, 
\item $x_1$ is on the second bubble.
\end{itemize}
\noindent
In the first case, the map $i_{(E,F)}$ is identity on $\prt D^2_1$, and this change does not affect  the trivialization of  $\det F'_{|\text{node}}$. Hence, the right side is oriented   as in Lemma \ref{lem_det}   and  there is no additional contribution to the  sign of $i_{(E,F)}$.

In the second case, the trivialization of $\det  F'_{|\id\coprod c_{D^2_2}(x_1)}$ is transported to the node using the positive direction of $\prt D^2_2$. The difference with the   push-forward trivialization on the node induced by  transporting $\det F_{|x_1}$ is measured by $\wt w_1(F_{|\partial D^2_2})$, since the latter is equivalent to transporting $\det  F'_{|\id\coprod c_{D^2_2}(x_1)}$ using the negative direction of $\prt D^2_2$.  The two trivializations are the same if $\wt w_1(F_{|\partial D^2_2})=0$, in which case there is no additional contribution to the sign of $i_{(E,F)}$. If $\wt w_1(F_{|\partial D^2_2})=1$, the induced trivialization on the node changes.   
This, furthermore,   changes the orientation of $\det D_1^{(E',F')}$ if \hbox{$\wt w_1(F_{|\partial D^2_1})=0$}, since in this case the determinant line of $\bp_1^{(E'^1,F'^1)}$ is isomorphic to a single copy of $F'^1_{|\text{node}}=\det F'_{|\text{node}}$; there is no change in the orientation of $\bp_1^{(E'^1,F'^1)}$  if $\wt w_1(F_{|\partial D^2_1})=1$.
Since we are considering the case when $\wt w_1(F_{|\partial D^2_2})=1$, there is no change in the orientation of $\bp_2^{(E'^1,F'^1)}$ and hence of $\det D_2^{(E',F')}$.  
Thus, the additional contribution in this case comes from the change of the orientation of the node and the change of the orientation of $\det D_1^{(E',F')}$, which can be expressed as 
$$(-1)^{1+(\wt{w}_1(F_{|\prt D^2_1})+1)}=(-1)^{\wt{w}_1(F_{|\prt D^2_1})}.
$$
Thus, the correction to the sign in Lemma \ref{lem_det} is 
$ \delta_{x_1} \wt{w}_1(F_{|\partial D^2_1})\wt{w}_1(F_{|\partial D^2_2}).$
\end{proof}

We now turn to the sign of $g$ when the target manifold is a point, which describes the moduli space of domains.
 The moduli space $\overline{\mf}_{k,l}$ is orientable. As in the proof of \cite[Corollary 1.8]{Geo1},  we orient it by assigning $+$ to the point $\overline{\mf}_{1,1}$ and use the forgetful maps $\mf_{k,l}\ri \mf_{1,1}$ to induce orientation on $\mf_{k,l}$ using the canonical orientation on the fiber, which is an open subset of $ (\prt D^2)^{k-1}\times (D^2)^{l-1}$. 
 When we work with the decorated moduli space $\wt{\mf}_{k,l}$, we slightly change these choices of orientations: if an interior marked point has a decoration $+$, we induce the orientation on the fiber using the canonical orientation of $D^2$; if the decoration is $-$, we take the opposite orientation of $D^2$. Thus, the point $\mf_{1,1^+}$ is assigned the sign $+$ and  the point $\mf_{1,1^-}$ is assigned the sign~$-.$ The map $g$ is defined on the boundary strata 
 $$\wt{\mf}_{k_1,l_1+1}\times \wt{\mf}_{k_2,l_2}\subset \wt{\mk M}_{k,l+1},$$ where the last boundary marked point on the first bubble is identified with the first boundary marked point on the second bubble. 
 
\begin{lem}\label{lem_dm}
The sign of $g$ on $\wt{\mf}_{k_1,l_1+1}\times \wt{\mf}_{k_2,l_2}$ equals $ (-1)^{k_2}.$
\end{lem}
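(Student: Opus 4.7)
The map $g$ on the boundary stratum $\wt{\mf}_{k_1, l_1+1} \times \wt{\mf}_{k_2, l_2}$ is the identity on the first factor and the conjugation $\mk c$ on the second factor, so its sign reduces to the sign of the induced involution $\mk c : \wt{\mf}_{k_2, l_2}^{\sigma} \to \wt{\mf}_{k_2, l_2}^{\bar\sigma}$ coming from $z \mapsto \bar z$ on the disk. The plan is to parametrize the source by a $\text{PSL}(2, \R)$-slice preserved by this standard conjugation, and then read off the sign from the Jacobian on the residual coordinates together with the decoration reversal.

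Assuming $l_2 \geq 1$, I would use $\text{PSL}(2, \R)$ to place the first boundary marked point (the node) at $1 \in \partial D^2$ and the first interior marked point at $0 \in D^2$, both fixed by $z \mapsto \bar z$. The residual coordinates are then $(x_2, \ldots, x_{k_2}) \in (\partial D^2 \setminus \{1\})^{k_2 - 1}$ and $(z_2, \ldots, z_{l_2}) \in (D^2 \setminus \{0\})^{l_2 - 1}$. Using the paper's convention (starting from $\wt{\mf}_{1,1^{\sigma_1}} = \sigma_1$ and inductively applying forgetful maps with $\sigma_i$-twisted $D^2$ fibers and positively oriented $\partial D^2$ fibers), the slice orientation of $\wt{\mf}_{k_2, l_2}^{\sigma}$ is $\sigma_1 \cdots \sigma_{l_2}$ times the product of the standard orientations on the free coordinates.

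The conjugation flips each decoration $\sigma_i$ and acts by $z \mapsto \bar z$ on every free coordinate, reversing the orientation on each factor. Comparing $\mk c^*$ of the target orientation with the source orientation yields three contributions: $(-1)^{l_2}$ from the decoration ratio $\prod \bar\sigma_i / \prod \sigma_i$, $(-1)^{k_2 - 1}$ from the reversals on the boundary factors, and $(-1)^{l_2 - 1}$ from the reversals on the interior factors. Their product is $(-1)^{k_2 + 2 l_2 - 2} = (-1)^{k_2}$. For the case $l_2 = 0$ (which forces $k_2 \geq 3$ by stability), I would first add an auxiliary $+$-decorated interior marked point, apply the above computation on $\wt{\mf}_{k_2, 1^+}$ to obtain sign $(-1)^{k_2}$, and then descend via the $D^2$-fibered forgetful map $\wt{\mf}_{k_2, 1^+} \to \wt{\mf}_{k_2, 0}$, on whose fiber $\mk c$ acts with sign $+1$ (the $z \mapsto \bar z$ reversal cancels with the decoration flip), to conclude.

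The step I expect to require the most care is verifying that the $\text{PSL}(2, \R)$-slice yields the correct orientation for $\wt{\mf}_{k_2, l_2}^{\sigma}$, i.e., that no additional sign enters from the quotient. This reduces to checking that the action $A \mapsto \bar A$ on $\text{PSU}(1, 1) \cong \text{PSL}(2, \R)$ is orientation-preserving, which I would confirm by computing the induced involution on the Lie algebra at the identity: it fixes the tangent direction given by the real part of $\alpha$ and negates the two tangent directions given by the imaginary parts of $\alpha$ and $\beta$, with overall determinant $+1$.
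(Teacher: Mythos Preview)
Your proposal is correct and follows essentially the same approach as the paper: both reduce to the conjugation on the second factor and read off the sign via the forgetful map to $\wt{\mf}_{1,1}$ (your $\PSL(2,\R)$-slice is precisely the fiber of this map, so the quotient-compatibility check in your last paragraph is unnecessary, as the orientation is \emph{defined} through the forgetful fibers rather than through a quotient). The only cosmetic difference is bookkeeping: the paper combines each interior point's decoration flip with its $z\mapsto\bar z$ reversal into a single $+1$ contribution, whereas you tally the $l_2$ decoration flips and the $l_2-1$ interior reversals separately before they cancel.
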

\begin{proof}
 The map $g$ is defined as identity on the first factor and as conjugation, reversing the decorations, on the second.  Thus, the contribution to the sign comes from the conjugation on the second factor. The conjugation is the identity on $\mf_{1,1}$;   since $\mf_{1,1^+}$ and $\mf_{1,1^-}$ are oriented differently, the conjugation is orientation-reversing as a map $\mf_{1,1^+}\ri\mf_{1,1^-}$. Thus,  $g$ is orientation-reversing if $(k_2,l_2)=(1,1)$. If $l_2>0$, the conjugation on $\wt{\mf}_{k_2,l_2}$ factors through the forgetful map $\wt{\mf}_{k_2,l_2}\ri\wt{\mf}_{1,1^\pm}$; it remains   to compute the contribution from the fiber, which is an open subset of \hbox{$(\prt D^2)^{k_2-1}\times (D^2)^{l_2-1}$.} Each boundary marked point contributes $(-1)$ to the sign, whereas the map on the interior marked points is orientation-preserving, since we take the opposite orientation when the decoration is $-$. Thus, the   contribution from the base and the fiber is $1+k_2-1$. If $l_2=0$, the orientation on $\wt{\mk{M}}_{k_2,0}=\ov{\mk{M}}_{k_2,0}$ is induced by the two fibrations $\ov{\mk M}_{k_2,1}\ri\ov{\mk M}_{k_2,0}$ and $\ov{\mk M}_{k_2,1}\ri \ov{\mk M}_{1,1}$. The sign of $g$ on $\ov{\mk M}_{1,1}$ is +;   the contribution from the fibers is $(-1)$ for the first fibration and $(-1)^{k_2-1}$ for the second. Thus, the overall sign is again   $(-1)^{k_2}$.
 \end{proof}

\begin{proof}[{\bf\emph{Proof of Proposition \ref{cor_dmor}}}]
The spaces 
$\M_{1,1}$ and $\M_{2,1}$ are a point  and a circle, respectively, and the latter is canonically oriented by the orientation of $\wt{\mk M}_{2,1}$. The map $g$ on $\wt{\mf}_{1,l_1+1}\times \wt{\mf}_{1,l_2}\subset \wt{\mf}_{0,l+1}$ is orientation-reversing by Lemma \ref{lem_dm}.   Thus, $\M_{0,l+1}$ is orientable and canonically oriented by the canonical orientation of $\wt{\mk M}_{0,l+1}$. \\

We show that all other cases are non-orientable by writing explicit loops on which $w_1(\M_{k,l+1})$ is nonzero. We either have at least 1 interior and 3 boundary marked points or at least 2 interior and 1 boundary marked points. In the former case, there is a loop in the moduli space crossing the following boundary strata: the first stratum has   only the first 3 boundary marked points $(x_1,x_2,x_3)$ on a separate bubble, the next has  only $(x_2,x_1)$ on a separate bubble,   the next only $(x_3,x_1)$, and the last only $(x_3,x_2)$. Crossing these boundaries closes a loop. The sign of $g$ on the first boundary is orientation-preserving, whereas it is orientation-reversing on the other 3. Thus, $w_1(\M_{k,l+1})$ does not vanish on this loop. If there are at least 2 interior and 1 boundary marked points, we take a loop crossing two boundary strata: one having the interior marked point~$z_1$ and a boundary marked point on a separate bubble and one having only $z_1$ on a separate bubble. The map $g$ is orientation-preserving on the former boundary and orientation-reversing on the latter. Thus, $w_1(\M_{k,l+1})$ does not vanish on this     loop. By this argument, the first Stiefel-Whitney class $w_1(\M_{k,l+1})$ is supported on the divisors $D^{\text{odd}}$ having an odd number of boundary marked points on the second bubble (in addition to the node).
\end{proof}

\begin{remark} The real moduli space $\ov{\R\mk M}_{k,0}$ is not orientable if $k>4$ as shown in \cite{Cey} and \cite{Eti}.  
\end{remark}
 
We   now compute the relative sign of the involution  
$$
g:  \wt{\mf}_{k_1,l_1+1}(M,b_1)\times_{fp}\wt{\mf}_{k_2,l_2}(M,b_2)\ri \wt{\mf}_{k_1,l_1+1}(M,b_1)\times_{fp}\wt{\mf}_{k_2,l_2}(M,\bar{b}_2).
$$
We induce an orientation at a point 
 $$
 u \in \wt{\mf}_{k_1,l_1+1}(M,b_1)\times_{fp}\wt{\mf}_{k_2,l_2}(M,b_2)
 $$ 
      via choices of trivializations as in Remark \ref{rem_indor} with $F=TL$. The chosen trivializations induce an orientation on $\det D_u^{(TM,TL)}$ and this together with the canonical orientation of the moduli space of domains induces an orientation at $u$ via (\ref{eq_fpidm}).  If the second bubble  is not stable, we stabilize locally by adding an interior marked point. The differential of $g$ factors through the isomorphism (\ref{eq_fpidm}), inducing the map (\ref{eq_det}) on $\det D_u^{(TM,TL)}$. The above choices of trivializations similarly induce an orientation at the point $g(u)$. We  call the   sign of $g$ with respect to these orientations the \textsf{relative sign of~$g$}. 
  Thus, the relative sign of~$g$ at $u$ is the sum of the contributions from the determinant bundles,  as given by Corollary \ref{cor_det} with $(E,F)=(TM, TL)$, and the moduli space of domains, as given by  Lemma \ref{lem_dm}. If the second bubble  is not stable, the  interior marked point added to stabilize the domain is not decorated and thus it contributes to the sign on the moduli space of domains. This addition is canceled by the additional contribution coming from the fiber of the map forgetting the additional marked point.  Thus, we obtain the following corollaries.

\begin{cor}\label{cor_ms}
The relative sign of $g$ defined above equals $(-1)^\eps$, where
\begin{equation}\label{eq_ms}
\eps = \frac{\mu(TM_{|D^2_2},TL_{|\prt D^2_2})+\wt{w}_1(TL_{|\partial D^2_2})}{2}+\delta_{x_1} \wt{w}_1(TL_{|\partial D^2_1})\wt{w}_1(TL_{|\partial D^2_2}) + k_2.
\end{equation}
\end{cor}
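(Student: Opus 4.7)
The plan is to reduce this directly to the two sign computations already carried out, namely Corollary \ref{cor_det} for the analytic factor and Lemma \ref{lem_dm} for the combinatorial factor, by exploiting the naturality of the canonical isomorphism (\ref{eq_fpidm}).

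First I would assume the second bubble is stable. The differential of $g$ at $u$ is block-diagonal with respect to the splitting (\ref{eq_fpidm}): the map $g$ acts as the identity on the first disk factor and as conjugation $\mk c$ on the second, and this conjugation commutes with the decomposition of the tangent space into the determinant of the linearized $\bp$-operator and the tangent space to the product of moduli spaces of domains. Hence the relative sign of $g$ is the product of the signs on these two factors. On the determinant factor, the isomorphism induced by $g$ is precisely the map $i_{(E,F)}$ of Remark \ref{rem_indor} with $(E,F)=(u_1,u_2)^*(TM,TL)$ and $\wt c = u_2^*\D\tau$; the trivializations used to orient both sides are transported from the fixed trivializations of $\wt F$ over $\prt B$ and of $F^1$ at $x_1$, exactly matching the setup of Corollary \ref{cor_det}. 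Applying that corollary gives the first two summands of~$\eps$. On the product of moduli spaces of domains, $g$ is identity on the first factor and conjugation-with-decoration-reversal on the second; Lemma \ref{lem_dm} then contributes the summand $k_2$.

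Second I would handle the case when the second bubble component has an unstable domain (at most two boundary special points and the node). Following the convention used throughout Section~\ref{transv_sec} and the construction in Section~\ref{sec:constr}, I locally stabilize by adding one auxiliary interior marked point on the second bubble and use the fibration forgetting that point to induce the orientation. The auxiliary point is not decorated in the sense of contributing to the decoration set $\sigma_2$, so Lemma \ref{lem_dm} applied to the stabilized domain picks up a factor of $(-1)$ from the added unsigned marked point while the fiber of the forgetful map, oriented as an open subset of $D^2$, contributes another $(-1)$ under conjugation (the canonical orientation on $D^2$ flips under $c_{D^2}$ in the absence of a decoration to absorb it). These two extra factors cancel, so the formula (\ref{eq_ms}) is insensitive to the stabilization choice.

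The main thing to verify, and which I expect to be the only subtlety, is that the orienting trivializations of $\wt F$ on $\prt B$ and of $F^1$ at $x_1$ that are used for the fiber product orientation on both sides of $g$ are compatibly transported: the push-forward trivializations on the conjugated bubble are obtained by applying $\D\tau$ composed with $c_{D_2^2}$, which is exactly the input to Corollary \ref{cor_det}, so the contribution $\delta_{x_1}\tilde w_1(TL_{|\prt D_1^2})\tilde w_1(TL_{|\prt D_2^2})$ captured there is the only interaction term between the two bubbles; no further interaction arises from the domain factor because Lemma \ref{lem_dm} was proved with the same boundary/interior decoration conventions. Putting the two contributions together yields $(-1)^\eps$ with $\eps$ as in (\ref{eq_ms}).
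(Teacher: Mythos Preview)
Your proposal is correct and follows essentially the same approach as the paper: the paper's argument (given in the paragraph immediately preceding the corollary) likewise factors the relative sign through the isomorphism (\ref{eq_fpidm}), applies Corollary~\ref{cor_det} with $(E,F)=(TM,TL)$ to the determinant factor and Lemma~\ref{lem_dm} to the domain factor, and handles the unstable second bubble by adding an undecorated auxiliary interior marked point whose contribution on the domain side cancels against the contribution from the fiber of the forgetful map. Your write-up is somewhat more explicit about why the two extra $(-1)$'s in the unstable case arise, but the logic is identical.
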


 \section{Orientability of $\widetilde{\mathcal{M}}_{k,l+1}^*(M,A)$}\label{sec:or}

In this section, we compute the first Stiefel-Whitney class of the new moduli space $\M_{k,l+1}^*(M,A)$. 
If this class is a pull-back of a class $\kappa$ on the product $L\times \M_{k,l+1}$ and $M$ is strongly semi-positive, the image of $\M_{k,l+1}(M, A)$ under $\ev\times \mk f$ is a pseudocycle with coefficients in the local system twisted by $\kappa$. In Theorem \ref{gc}, we express $w_1(\widetilde{\mathcal{M}}_{k,l+1}^*(M,A))$ in terms of classes from $L$, $\M_{k,l+1}$, and Poincare duals of boundary divisors in $\M_{k,l+1}^*(M,A)$. The presence of the latter shows that in general the target does not carry the correct local system needed to accommodate the pseudocycle. In Theorem~\ref{cor_rs}, 
we give sufficient conditions for the class to vanish if $k=0$ and to be as close as possible to a pull-back if $k>0$.  In the latter case, one can still  define invariant counts  using intersection theory arguments, as explained in the next section.\\

   Denote by  
\begin{enumerate}[label=$\bullet$, leftmargin=*]
\item $D_i$   the codimension one strata of $\widetilde{\mathcal{M}}_{k,l+1}^*(M,A)$  
with Maslov index on the second bubble satisfying  $\mu(TM_{|D^2_2}, TL_{|\prt D^2_2})\equiv i$ mod $4$, 
\item  $O_{x_1}$   the codimension one strata of $\widetilde{\mathcal{M}}_{k,l+1}^*(M,A)$ having the marked point $x_1$ on the second bubble and odd Maslov indices on both bubbles,
\item $U$   the codimension one strata of $\widetilde{\mathcal{M}}_{k,l+1}^*(M,A)$ having a single boundary marked point on the second bubble (in addition to the node).
\end{enumerate}
Each of these strata has no boundary in $\widetilde{\mathcal{M}}_{k,l+1}^*(M,A)$ and thus defines a class in $H_{\text{top}-1}^{\text{BM}}(\widetilde{\mathcal{M}}_{k,l+1}^*(M,A);\Z_2)$, where $H^{\text{BM}}_*$ denotes Borel-Moore homology; see \cite[Chapter IX]{Iver}. Denote by $D_i^{\spcheck}, O_{x_1}^{\spcheck},$ and $U^{\spcheck}$ their Poincare dual classes in $H^1(\widetilde{\mathcal{M}}_{k,l+1}^*(M,A);\Z_2)$.

\begin{thm}\label{gc}  Suppose $M$ is a symplectic manifold with an anti-symplectic involution~$\tau$ and $k,l\in \Z^{\geq 0}$ with $k+l>0$; if $L=M^\tau$ is not orientable, assume $k>0$.   Let  $\gamma$ be a loop in $\M_{k,l+1}^*(M,A)$ and    $\widetilde{T}_\gamma$ be the   class in $H_2(L;\mathbb{Z}_2)$ traced in $L$ by the boundary  of the domains  along  $\gamma$.    The first Stiefel-Whitney class of the moduli space $\M_{k,l+1}^*(M,A)$ evaluated on    $\gamma$ equals 
 \begin{equation}\label{eq_fswc} \begin{split}w_1(\widetilde{\mathcal{M}}_{k,l+1}^*(M,A))\cdot \gamma = &\, w_2(L)\cdot \widetilde{T}_{\gamma}+ (c_1(M)\cdot A+1)\cdot(w_1(L)\cdot \ev_{x_1}(\gamma))\\
&+\left[\mathfrak{f}^*(w_1(\widetilde{\mathcal{M}}_{k,l+1}))+D^{\spcheck}_1+D_2^{\spcheck} +  O_{x_1}^{\spcheck}+U^{\spcheck}\right]\cdot\gamma.
\end{split}
\end{equation}
 In particular, if $L$ is orientable, then  
$$ w_1(\widetilde{\mathcal{M}}^*_{k,l+1}(M,A))\cdot \gamma=w_2(L)\cdot \widetilde{T}_{\gamma}+  \left[\mathfrak{f}^*(w_1(\widetilde{\mathcal{M}}_{k,l+1}))+ D_2^{\spcheck}  +U^{\spcheck}\right]\cdot\gamma.
$$
  \end{thm}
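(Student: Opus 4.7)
The strategy follows the informal outline in Section~\ref{sec_intro}: pick a loop $\gamma$ in $\M_{k,l+1}(M,A)^*$, transport a coherent orientation along it, and collect the local obstructions to closing up. At the basepoint $u_0\in\gamma$, I would choose trivializations of $\wt F=TL\oplus 3\det TL$ over $u_0(\prt B)$ and of $F^1=\det TL$ at $u_0(x_1)$; by \cite[Proposition~4.9]{Geo1} these orient $\det D_{u_0}$, and the canonical isomorphism~(\ref{eq_idmf}) combined with the canonical orientation on $\M_{k,l+1}$ orients $\M_{k,l+1}(M,A)^*$ at $u_0$. Transporting these trivializations along $\gamma$, the orientation varies continuously inside a single piece $\wt{\mf}_{k,l+1}(M,b)$, while at each codimension-one crossing the gluing involution~$g$ compares the orientation approached from one side with the orientation induced on the other, with discrepancy equal to the relative sign of~$g$ computed in Corollary~\ref{cor_ms}. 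Thus $w_1(\M_{k,l+1}(M,A)^*)\cdot\gamma$ equals the total relative sign of~$g$ accumulated along~$\gamma$ plus the holonomy of the trivializations upon returning to~$u_0$.

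For the holonomy, the discrepancy between the final and original trivializations is an element $(sp,o)\in \pi_1(SO(n+3))\times \pi_0(O(1))$ whose effect on the orientation of $\det D$ is $sp+(\mu(b)+1)o$ by \cite[Proposition~4.9]{Geo1}. Since the doubled class $A$ is fixed along~$\gamma$, the Maslov index $\mu(b)=c_1(M)\cdot A$ is constant. A direct computation gives $sp=w_2(\wt F)\cdot \wt T_\gamma$; expanding $w(3\det TL)=(1+w_1(L))^3$ and using the Whitney sum formula yields $w_2(\wt F)\equiv w_2(L)\pmod 2$, and similarly $o=w_1(L)\cdot \ev_{x_1}(\gamma)$. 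This accounts for the first two terms on the right-hand side of~(\ref{eq_fswc}).

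The remaining terms arise from decomposing Corollary~\ref{cor_ms} into its three summands. The first summand $\tfrac12(\mu(b_2)+\wt w_1(TL_{|\prt D^2_2}))$ is odd in $\Z_2$ precisely when the Maslov index on the second bubble is $\equiv 1$ or $2\pmod 4$, contributing $(D_1^\vee+D_2^\vee)\cdot\gamma$. The second summand $\delta_{x_1}\wt w_1\cdot\wt w_1$ picks out exactly the stratum $O_{x_1}$, contributing $O_{x_1}^\vee\cdot\gamma$. For the third summand, $k_2\pmod 2$ counts crossings of strata where the second bubble carries an odd number of boundary marked points; when $(k_2,l_2)\neq (1,0)$ the second bubble is stable as a domain and such a stratum maps under $\mk f$ onto a divisor $D^{\text{odd}}$ of $\M_{k,l+1}$, so by the proof of Theorem~\ref{cor_dmor} its contribution is absorbed into $\mk f^*(w_1(\M_{k,l+1}))\cdot\gamma$; the only $k_2$-odd configuration with unstable second bubble is $(k_2,l_2)=(1,0)$, which is exactly~$U$, contributing $U^\vee\cdot\gamma$.

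The main obstacle is handling unstable bubbles cleanly: the isomorphism~(\ref{eq_idmf}) is valid only for stable domains, so at crossings where the second bubble has fewer than three special points one must first stabilize by adding an auxiliary interior marked point and then forget it, as in the proof of Corollary~1.8 of~\cite{Geo1}. The relative sign of~$g$ in Corollary~\ref{cor_ms} is unaffected by this device, and the dichotomy ``$k_2$-odd stratum is $D^{\text{odd}}$ or is $U$'' then matches the right-hand side of~(\ref{eq_fswc}) summand by summand. In the special case where $L$ is orientable, $\wt w_1$ and $w_1(L)$ vanish, so $D_1=O_{x_1}=\emptyset$ and the $\ev_{x_1}$ term drops, recovering the simplified formula.
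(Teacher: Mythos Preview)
Your overall strategy matches the paper's proof exactly, but there is a sign error in the boundary-crossing contribution that propagates into a misidentification of the stratum~$U$. When two pieces $\wt{\mf}_{k,l+1}(M,b)$ and $\wt{\mf}_{k,l+1}(M,b')$ are glued along their common boundary via~$g$, the resulting orientation is continuous across the seam precisely when $g$ is orientation-\emph{reversing} on the boundary (so that the outward normals match up). Hence the contribution to $w_1\cdot\gamma$ at each crossing is $\eps_g+1$, not $\eps_g$; this is exactly the paper's parenthetical ``plus the number of the crossed divisors (if $g$ is orientation-reversing on all boundary divisors, there is no contribution).'' The extra $+1$ is naturally absorbed into the $k_2$-summand of Corollary~\ref{cor_ms}, so your treatment of the first two summands (giving $D_1^\vee+D_2^\vee$ and $O_{x_1}^\vee$) is unaffected, but the third summand should read $k_2+1$, which is odd when $k_2$ is \emph{even}.

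Consequently your identification of the unstable strata is inverted. In the fiber-product notation $\wt{\mf}_{k_1,l_1+1}\times_{\fp}\wt{\mf}_{k_2,l_2}$, the integer $k_2$ counts boundary special points on the second bubble \emph{including} the node, so the unstable second-bubble cases are $(k_2,l_2)=(1,0)$ and $(2,0)$; your claim that only $(1,0)$ is unstable is false. The stratum $U$, defined as having one boundary marked point \emph{in addition to} the node, is $(k_2,l_2)=(2,0)$, with $k_2+1\equiv 1$ contributing $U^\vee\cdot\gamma$; the case $(k_2,l_2)=(1,0)$ has $k_2+1\equiv 0$ and is disregarded, as the paper notes. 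Likewise, $w_1(\M_{k,l+1})$ is supported on divisors $D^{\text{odd}}$ where the second bubble carries an odd number of \emph{non-node} marked points, i.e.\ $k_2$ even, matching the corrected parity. With these two fixes your argument goes through and coincides with the paper's.
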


 \begin{proof} 
 Let $k>0$. We may assume that the loop $\gamma$ consists of   several paths $\alpha_1, ..\, , \alpha_r$ each in some moduli space $\widetilde{\mf}_{k,l+1}(M, b_i)$, with end points at  boundary divisors, glued together by the map $g$. Choose a point  $u$ in the interior of the path $\alpha_1$; this   separates the path $\alpha_1$ into two paths $\beta_1$ and $\beta_2$. Choose trivializations of $TL\oplus 3\det TL $ and $\det TL$ over the images $u(\partial D^2)$ and $u(x_1)$, respectively. The path $\beta_1$ transports these trivializations to its other end, and hence to the beginning of $\alpha_2$, which   transports them to $\alpha_3$ and so on until we close the loop.  The first Stiefel-Whitney class evaluated on the loop $\gamma$ is then given by the sum mod 2 of
 
\begin{enumerate}[label=$\bullet$, leftmargin=*]
 \item the relative signs of the map $g$ at  the boundary divisors we cross plus the number of the crossed divisors (if $g$ is orientation-reversing on all boundary divisors, there is no contribution), 
 \item   the difference between the initial trivialization of $TL\oplus3\det TL$ at $u_{|\prt D^2}$ and  the one induced following the loop,
 \item  the difference between the initial trivialization of $\det TL_{|u(x_1)}$  and  the one induced following the loop if $\wt w_1(TL_{|u(\partial D^2)}) =0$.
 \end{enumerate} 
 \noindent
 By Lemma \ref{lem_sgn}, the first set of signs describes the change of orientation across codimension one boundary.
 The last two sets of signs describe the change of the orientation of the moduli space at~$u$ under changes of the given trivializations; see \cite[Proposition~4.9]{Geo1}. \\
 
 Since $TL\oplus 3 \det TL$ is an orientable bundle, the change in the trivialization is measured by 
 $$w_2(TL\oplus 3\det TL)\cdot\wt{T}_{\gamma} =w_2(TL)\cdot\wt{T}_{\gamma}.$$
This gives  the first summand in (\ref{eq_fswc}).  The change in the trivialization of $\det TL_{|u(x_1)}$ is measured by $w_1(\det TL)\cdot \ev_{x_1}(\gamma)$; by \cite[Proposition~4.9]{Geo1}, its contribution to the change of the orientation of the moduli space is
 $$
 (w_1(\det TL_{|u(\prt D^2)})+1)(w_1(\det TL)\cdot \ev_{x_1}(\gamma)).
 $$
Since $w_1(TL)\cdot (u(\prt D^2)=\mu(b)$ mod 2 and $\mu(b) = c_1(M)\cdot A$, this gives the second summand in~(\ref{eq_fswc}).\\

Finally, the relative sign of $g$ at the stratum $\wt{\mf}_{k_1,l_1+1}(M,b_1)\times_{fp}\wt{\mf}_{k_2,l_2}(M,b_2, )$ is given by Corollary~\ref{cor_ms}.
The contribution from the marked points is the same as their contribution in the moduli space of domains if   the  domain is     stable. The unstable domains have a second bubble with either no marked points or a single boundary marked point. The former contributes one for the node and one for the boundary component and   can be disregarded.  The contribution from the latter is non-trivial and is described by $U^{\spcheck}\cdot\gamma$. This gives the expression $[\mathfrak{f}^*(w_1(\widetilde{\mathcal{M}}_{k,l+1}))+U^{\spcheck}]\cdot\gamma$ in~(\ref{eq_fswc}).
The fraction   in~(\ref{eq_ms}) is nonzero mod $2$ if $\mu(TM_{|D^2_2},TL_{|\prt D^2_2})$ is   1 or 2 mod 4. This  is described by $D_1^{\spcheck}\cdot\gamma$ and $D_2^{\spcheck}\cdot\gamma$. The expression  $\displaystyle \delta_{x_1} \wt{w}_1(TL_{|\partial D^2_1})\wt{w}_1(TL_{|\partial D^2_2})$ contributes   when the boundary marked point $x_1$ is on the second bubble and the Maslov indices of both bubbles are odd, which  is described by $
O_{x_1}^{\spcheck}\cdot\gamma$. This gives~(\ref{eq_fswc}).\\

 If $L$ is orientable, we do not need a marked point to transport the chosen trivialization  of $\det TL$ over a point on $u(\prt D^2)$, as any such choice determines an orientation on $L$ and  a continuous choice of trivializations of $\det TL$. Thus, the second term, measuring the difference in this trivialization, vanishes. Moreover, the orientability of $L$ implies that all Maslov indices are even and hence there is no contribution from $D_1$ or~$O_{x_1}$, as~well. \end{proof}

\begin{remark} If the loop $\gamma$ is inside one of the moduli spaces $\widetilde{\mf}_{k,l+1}(M,b)$,       the formula of Theorem \ref{gc} reduces to the formula for  $\widetilde{\mf}_{k,l+1}(M,b)$ given in \cite[Theorem 1.1]{Geo1}. Results on the orientability of the precompactified real moduli space are also obtained   in~\cite{Rem}. Other expressions for the first Stiefel-Whitney class of $\ov{\R{\mk M}}_{k,l}(M, A)$, when   $\dim M \leq 3$ and the number of marked points equals the dimension of the moduli space, are given in \cite[Proposition 4.5]{Wel} and \cite{Pi, Pi2}. 
\end{remark}

Similarly, if $(E, J_E)\ri (M, J)$ is a complex bundle  with a conjugation~$\wt\tau$ lifting~$\tau$, $F=E^{\wt\tau}$, we compute the first Stiefel-Whitney class of the line bundle $\det \bp^{(E,F)}_{/i_{(E,F)}}$; see (\ref{eq_glbdl2}) for the definition.
Let
\begin{enumerate}[label=$\bullet$, leftmargin=*]
\item $d_i^{(E,F)}$   the codimension one strata of $\widetilde{\mathcal{M}}_{k,l+1}^*(M,A)$  
with Maslov index on the second bubble satisfying  $\mu(E_{|D^2_2}, F_{|\prt D^2_2})\equiv i$ mod $4$, 
\item  $o^{(E,F)}_{x_1}$   the codimension one strata of $\widetilde{\mathcal{M}}_{k,l+1}^*(M,A)$ having the marked point $x_1$ on the second bubble and odd Maslov indices $\mu(E_{|D^2_i}, F_{|\prt D^2_i})$ on both bubbles.
\end{enumerate}

\begin{prop}\label{gcdet}
Suppose $M, \tau$, and  $L$ are as in Theorem~\ref{gc}, $k,l\in \Z^{\geq 0}$ with $k+l>0$,  and   $(E, J_E)\ri (M, J)$ is a complex bundle  with a conjugation~$\wt\tau$ lifting~$\tau$, \hbox{$F=E^{\wt\tau}$};
if $F$ is not orientable, assume $k>0$.   Let  $\gamma$ be a loop in $\M_{k,l+1}^*(M,A)$ and    $\widetilde{T}_\gamma$ be the   class in $H_2(L;\mathbb{Z}_2)$ traced in $L$ by the boundary  of the domains  along ~$\gamma$.    The first Stiefel-Whitney class of the bundle $\det \bp^{(E,F)}_{/i_{(E,F)}}$ as in (\ref{eq_glbdl2}) evaluated on $\gamma$ equals
 \begin{equation*}\begin{split} w_1(\det \bp^{(E,F)}_{/i_{(E,F)}})\cdot \gamma =  w_2( F)\cdot \widetilde{T}_{\gamma}+ (c_1(E)\cdot A+1)\cdot(w_1(F)\cdot \ev_{x_1}(\gamma))&\\
+\left[d^{\spcheck(E,F)} _1+d_2^{\spcheck(E,F)} +  o^{\spcheck(E,F)}_{x_1}\right]\cdot\gamma&.
\end{split}
\end{equation*}
  In particular, if $F$ is orientable, then  
$$ w_1(\det \bp^{(E,F)}_{/i_{(E,F)}})\cdot \gamma=w_2(F)\cdot \widetilde{T}_{\gamma}+   d_2^{\spcheck(E,F)}   \cdot\gamma.$$
\end{prop}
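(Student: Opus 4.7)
The plan is to adapt the proof of Theorem \ref{gcef} to the line bundle $\det \bp^{(E,F)}_{/i_{(E,F)}}$, with one crucial simplification: since we are computing the first Stiefel-Whitney class of a line bundle (rather than of the total moduli space), we only need to track the sign of the gluing isomorphism $i_{(E,F)}$ on the determinant lines, which is supplied by Corollary \ref{cor_det}. The $(-1)^{k_2}$ contribution from Lemma \ref{lem_dm}, which produced the $\mathfrak{f}^*w_1(\widetilde{\mathcal{M}}_{k,l+1})$ and $U^{\spcheck}$ terms in Theorem \ref{gcef}, is absent from Corollary \ref{cor_det} and therefore does not appear in the final formula.

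First I would decompose a loop $\gamma$ in $\M_{k,l+1}(M,A)^*$ into paths $\alpha_1,\dots,\alpha_r$ each lying inside a single decorated moduli space $\wt{\mf}_{k,l+1}(M,b_i)$, glued at codimension one strata by $i_{(E,F)}$. At a basepoint $u$ in $\alpha_1$, choose trivializations of $F \oplus 3\det F$ over $u(\prt D^2)$ and of $\det F$ over $u(x_1)$; by \cite[Proposition 4.9]{Geo1} these determine an orientation of $\det D_u^{(E,F)}$. Transporting these choices along $\gamma$, the total sign picked up, which is $w_1(\det \bp^{(E,F)}_{/i_{(E,F)}})\cdot \gamma$ mod $2$, splits into three contributions: (a) the relative sign of $i_{(E,F)}$ at each crossed divisor, given by Corollary \ref{cor_det}; (b) the monodromy of the chosen trivialization of $F\oplus 3\det F$ along $\widetilde T_\gamma$, measured by $w_2(F\oplus 3\det F)\cdot \widetilde T_\gamma = w_2(F)\cdot \widetilde T_\gamma$; and (c) the monodromy of $\det F$ over $\ev_{x_1}(\gamma)$, weighted by its effect on the determinant. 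The weight in (c) is determined by \cite[Proposition 4.9]{Geo1}: one change of the trivialization of $\det F$ at $x_1$ alters the orientation of $\det D^{(E,F)}_u$ by $\mu(b)+1 = c_1(E)\cdot A + 1$, producing the term $(c_1(E)\cdot A + 1)\cdot (w_1(F)\cdot \ev_{x_1}(\gamma))$.

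Reading off the exponent in Corollary \ref{cor_det}, the half-integer $\tfrac{\mu(E_{|D^2_2},F_{|\prt D^2_2}) + \wt w_1(F_{|\prt D^2_2})}{2}$ is odd exactly when $\mu(E_{|D^2_2},F_{|\prt D^2_2}) \equiv 1$ or $2 \pmod 4$, giving the Poincar\'e dual contributions $d_1^{\spcheck(E,F)}$ and $d_2^{\spcheck(E,F)}$, while the $\delta_{x_1}\wt w_1(F_{|\prt D^2_1})\wt w_1(F_{|\prt D^2_2})$ term contributes $o_{x_1}^{\spcheck(E,F)}$. Adding (a)--(c) yields the claimed formula. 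When $F$ is orientable, $w_1(F) = 0$ kills the third summand and forces all Maslov indices to be even, removing $d_1^{\spcheck(E,F)}$ and $o_{x_1}^{\spcheck(E,F)}$ and leaving only $w_2(F)\cdot \widetilde T_\gamma + d_2^{\spcheck(E,F)}\cdot \gamma$, exactly as in the analogous specialization of Theorem \ref{gc}.

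The point requiring the most care is the contribution at divisors where the second bubble is unstable, carrying only the node and at most one boundary marked point. Here Corollary \ref{cor_det}, being formulated for an arbitrary bundle pair over a bubble domain with a single boundary node, applies directly to $\det D^{(E,F)}$ without any domain stabilization, because the determinant line bundle is insensitive to the number of marked points. This is precisely where the argument genuinely diverges from the proof of Theorem \ref{gcef}: the $(-1)^{k_2}$ factor that produced $U^{\spcheck}$ there simply does not appear in Corollary \ref{cor_det}, so these strata contribute only through their Maslov index, already captured by $d_i^{\spcheck(E,F)}$ and $o_{x_1}^{\spcheck(E,F)}$. I expect this bookkeeping of the unstable strata to be the main obstacle, but once one verifies that Corollary \ref{cor_det} is applied unchanged in that regime, the rest of the proof is a direct transcription of the proof of Theorem \ref{gcef}.
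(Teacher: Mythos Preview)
Your proposal is correct and follows essentially the same approach as the paper, which simply notes that the fiber of (\ref{eq_glbdl}) is oriented by trivializations of $F\oplus 3\det F$ and $\det F$ and then refers back to the proof of Theorem~\ref{gc} with $(TM,TL)$ replaced by $(E,F)$. Your more detailed bookkeeping, in particular the explicit observation that the $(-1)^{k_2}$ term from Lemma~\ref{lem_dm} is absent because only the determinant line (not the domain moduli) is being tracked, correctly accounts for why $\mathfrak{f}^*w_1(\widetilde{\mathcal{M}}_{k,l+1})$ and $U^{\spcheck}$ do not appear here.
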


\begin{proof}
The fiber of $\det \bp^{(E,F)}_{/i_{(E,F)}}$ at a point is oriented by trivializations of $ F\oplus 3\det F$ and $\det F$ over the image of the boundary of the disk and a point on this image, respectively. As in the proof of Theorem \ref{gc}, the contribution to the first Stiefel-Whitney class is given by the relative signs of the gluing map given in Corollary \ref{cor_det} and the first and second Stifel-Whitney classes of $F$, giving the expressions in the statement.
\end{proof}

If the pair $(E,F)$ is as in Proposition \ref{gcdet}, the isomorphism
$$
\det \bp^{(E\oplus TM, F\oplus TL)}_{/i_{(E\oplus TM, F\oplus TL)}}\cong \det \bp^{(E,F)}_{/i_{(E,F)}}\otimes \det \bp^{(TM,TL)}_{/i_{(TM,TL)}}
$$
induces the equality
\begin{equation}\label{eq_fsw2}
w_1(\det \bp^{(E\oplus TM, F\oplus TL)}_{/i_{(E\oplus TM, F\oplus TL)}})= w_1(\det \bp^{(E,F)}_{/i_{(E,F)}})+ w_1(\det \bp^{(TM,TL)}_{/i_{(TM,TL)}}).
\end{equation}

\begin{cor}\label{prop_cp}
Suppose $M$ is a symplectic manifold with an anti-symplectic involution~$\tau$ and $k,l\in \Z^{\geq 0}$ with $k+l>0$; if $L=M^\tau$ is not orientable, assume $k>0$. If     $E\ri M$ is a complex bundle   with a conjugation $\wt\tau$ lifting~$\tau$, $F=E^{\wt\tau}$, and
$$w_1(F)=w_1(L),~~ w_2 (F\oplus TL)=0,~~ 4\,|\,(c_1(E)+c_1(M))\cdot A\quad \forall A\in \H_2(M),$$
then 
$$w_1(\widetilde{\mathcal{M}}_{k,l+1}^*(M,A))=w_1(\det \bp^{(E,F)}_{/i_{(E,F)}})+\mathfrak{f}^*(w_1(\widetilde{\mathcal{M}}_{k,l+1}))+ U^{\spcheck} .$$
In particular, if $k=0$, then 
$$w_1(\widetilde{\mathcal{M}}_{0,l+1}^*(M,A))=
 w_1(\det \bp^{(E,F)}_{/i_{(E,F)}}). $$ 
\end{cor}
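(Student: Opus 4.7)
The plan is to apply Theorem \ref{gcef} to the bundle pair $(E,F)$ supplied by the hypotheses, observe that the hypotheses force all non-trivial contributions on the right-hand side to vanish except the $\mathfrak{f}^*(w_1(\widetilde{\mathcal{M}}_{k,l+1}))+U^{\spcheck}$ piece, and then subtract the contribution of the bundle determinant via (\ref{eq_fsw}).

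First, I would verify the orientability hypothesis of Theorem \ref{gcef}: since $w_1(F)=w_1(E^{\wt\tau})=w_1(L)$, we have $w_1(F\oplus TL)=w_1(F)+w_1(L)=0$, so $F\oplus TL$ is orientable and the ``if'' clause in Theorem \ref{gcef} imposes no extra condition beyond the one already present in the statement. The second assumption $w_2(F\oplus TL)=0$ kills the first summand $w_2(F\oplus TL)\cdot \widetilde{T}_\gamma$, and the vanishing of $w_1(F\oplus TL)$ kills the second summand $(c_1(E\oplus TM)\cdot A+1)\cdot(w_1(F\oplus TL)\cdot \ev_{x_1}(\gamma))$.

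Next, I would show that the divisibility hypothesis $4\mid (c_1(E)+c_1(M))\cdot A$ forces the strata $D_1^{(E,F)}$, $D_2^{(E,F)}$, and $O_{x_1}^{(E,F)}$ to be empty (and hence their Poincar\'e duals vanish). Indeed, if a map has a second disk bubble $u_2\co(D^2,\prt D^2)\to(M,L)$ representing a class $b_2\in \H_2(M,L)$, then the Maslov index of $(TM\oplus E, TL\oplus F)$ on that bubble equals $c_1(TM\oplus E)\cdot \o(b_2)=(c_1(M)+c_1(E))\cdot \o(b_2)$, which is $\equiv 0 \pmod 4$ by assumption. Thus no codimension one stratum has Maslov index $1$ or $2$ mod $4$ on the second bubble, and in particular both bubbles always have even Maslov index, so $O_{x_1}^{(E,F)}$ is also empty. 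What remains of the formula in Theorem \ref{gcef} is $w_1(\widetilde{\mathcal{M}}^{(E,F)}_{k,l+1}(M,A)^*)\cdot\gamma=[\mathfrak{f}^*(w_1(\widetilde{\mathcal{M}}_{k,l+1}))+U^{\spcheck}]\cdot\gamma$.

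Finally, I would combine this with (\ref{eq_fsw}), which asserts $w_1(\widetilde{\mathcal{M}}^{(E,F)}_{k,l+1}(M,A)^*)=w_1(\widetilde{\mathcal{M}}_{k,l+1}(M,A)^*)+w_1(\det \bp^{(E,F)}_{/i_{(E,F)}})$; transposing (mod $2$) yields the first displayed formula. For the $k=0$ specialization, I would note two separate simplifications: the stratum $U$ is defined by the presence of a boundary marked point on the second bubble, which is impossible when $k=0$, so $U^{\spcheck}=0$; and by Theorem \ref{cor_dmor} the moduli space $\widetilde{\mathcal{M}}_{0,l+1}$ is orientable, so $\mathfrak{f}^*(w_1(\widetilde{\mathcal{M}}_{0,l+1}))=0$. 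The main content of the argument is therefore just bookkeeping on top of Theorem \ref{gcef}; no essentially new computation should be required.
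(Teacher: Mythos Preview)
Your proposal is correct and follows essentially the same route as the paper: apply Theorem \ref{gcef} so that the hypotheses kill every term except $\mathfrak{f}^*(w_1(\widetilde{\mathcal{M}}_{k,l+1}))+U^{\spcheck}$, then rearrange (\ref{eq_fsw}). The paper's proof is terser (it just says ``by our assumptions and Theorem \ref{gcef}''), whereas you have spelled out explicitly why each term vanishes, including the identification $\mu((TM\oplus E)_{|D^2_2},(TL\oplus F)_{|\prt D^2_2})=(c_1(M)+c_1(E))\cdot\o(b_2)$ that feeds the divisibility hypothesis into the vanishing of $D_1^{(E,F)}$, $D_2^{(E,F)}$, and $O_{x_1}^{(E,F)}$.
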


\begin{proof}
By our assumption, Proposition \ref{gcdet}, and (\ref{eq_fsw2}),
$$
0= w_1(\det \bp^{(E,F)}_{/i_{(E,F)}})+ w_1(\det \bp^{(TM,TL)}_{/i_{(TM,TL)}}).
$$
By Proposition \ref{gcdet} and Theorem \ref{gc},
$$w_1(\widetilde{\mathcal{M}}_{k,l+1}^*(M,A))=w_1(\det \bp^{(TM,TL)}_{/i_{(TM,TL)}})+\mathfrak{f}^*(w_1(\widetilde{\mathcal{M}}_{k,l+1}))+ U^{\spcheck},$$
giving the result.
\end{proof}

\begin{thm}\label{cor_rs} Suppose $M$ is a symplectic manifold with an anti-symplectic involution~$\tau$ and $k,l\in \Z^{\geq 0}$ with $k+l>0$. If    $L=M^\tau$ is orientable and there is a complex bundle $E\ri M$,   with a conjugation $\wt\tau$ lifting~$\tau$, such that 
$$w_2 (2E^{\wt\tau}\oplus TL)=0, \quad4\,|\,(\mu(2E,2E^{\wt\tau})+\mu(M,L))\cdot b\quad\forall b\in \H_2(M,L)~\text{with}~b=\bar b, $$
then 
$$w_1(\widetilde{\mathcal{M}}_{k,l+1}^*(M,A))= \mathfrak{f}^*(w_1(\widetilde{\mathcal{M}}_{k,l+1}))+ U^{\spcheck}.
$$
In particular,
$\widetilde{\mathcal{M}}_{0,l+1}^*(M,A)
$ is orientable.
\end{thm}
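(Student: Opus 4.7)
The plan is to compare $\M_{k,l+1}(M,A)^*$ with its twisted analogue $\M^{(2E, 2E^{\wt\tau})}_{k,l+1}(M,A)^*$ via the bundle relation~(\ref{eq_fsw}), after applying Theorem~\ref{gcef} to the twisted space. Since $2E^{\wt\tau} = E^{\wt\tau}\oplus E^{\wt\tau}$ is automatically orientable and $L$ is orientable by assumption, the sum $2E^{\wt\tau}\oplus TL$ is orientable with vanishing $w_1$; combined with $w_2(2E^{\wt\tau}\oplus TL)=0$, Theorem~\ref{gcef} applied to $(E,F)=(2E, 2E^{\wt\tau})$ collapses most of its right-hand side. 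In particular, the $w_2$-term disappears, the $w_1$-pairing term is zero, and the divisors $D_1^{(2E,2E^{\wt\tau})}$ and $O_{x_1}^{(2E,2E^{\wt\tau})}$ are empty because all Maslov indices of the pair $(2E\oplus TM, 2E^{\wt\tau}\oplus TL)$ are even.

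The key step will be to show that the remaining divisor $D_2^{(2E,2E^{\wt\tau})}$ is trivial as a cohomology class. For a loop $\gamma$ in $\M^{(2E,2E^{\wt\tau})}_{k,l+1}(M,A)^*$ crossing codimension-one strata at points with second-bubble classes $b_{2,1},\ldots,b_{2,N}$ and crossing signs $\eps_i\in\{\pm 1\}$, each crossing transports the loop between components $\wt{\mathfrak{M}}(M,b)$ and $\wt{\mathfrak{M}}(M,b_1+\overline{b_{2,i}})$ via $g$, changing the overall class by $\eps_i(\overline{b_{2,i}}-b_{2,i})$. The loop closes precisely when $c:=\sum_i\eps_i b_{2,i}$ satisfies $\tau_*c=-c$, i.e., $c=\overline{c}$. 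The Maslov hypothesis applied to this self-conjugate class then gives $\sum_i\eps_i\mu(2E\oplus TM,2E^{\wt\tau}\oplus TL)\cdot b_{2,i}\equiv 0\pmod 4$; since each individual Maslov index is $0$ or $2$ mod~$4$ by orientability, this forces the number of indices with $\mu\equiv 2\pmod 4$ to be even, so $D_2^{(2E,2E^{\wt\tau})}\cdot\gamma\equiv 0\pmod 2$. Consequently, $w_1(\M^{(2E,2E^{\wt\tau})}_{k,l+1}(M,A)^*)=\mk f^*(w_1(\M_{k,l+1}))+U^{\spcheck}$.

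By~(\ref{eq_fsw}) this implies
$$
w_1(\M_{k,l+1}(M,A)^*) = \mk f^*(w_1(\M_{k,l+1})) + U^{\spcheck} + w_1\bigl(\det\bp^{(2E,2E^{\wt\tau})}_{/i_{(2E,2E^{\wt\tau})}}\bigr),
$$
so the final step will be to verify that the last summand vanishes. Proposition~\ref{gcdet} reduces this to $w_2(2E^{\wt\tau})\cdot\wt T_\gamma+d_2^{(2E,2E^{\wt\tau})}\cdot\gamma\equiv 0\pmod 2$ for every loop $\gamma$; this is the content of Lemma~\ref{lem_orm}, where the spin structure on $2E^{\wt\tau}\oplus TL$ provided by the $\tau$-orienting structure trivializes the $w_2$-contribution away from $U$, and the same loop-closure argument handles the $d_2^{(2E,2E^{\wt\tau})}$ divisor. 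The main obstacle is precisely this final vanishing, which requires the full strength of the $\tau$-orienting data rather than only the Maslov congruence. For the ``in particular'' claim, when $k=0$ the moduli space $\M_{0,l+1}$ is orientable by Theorem~\ref{cor_dmor} and the divisor $U$ is empty (since $U$ requires a boundary marked point on the second bubble), so $w_1(\M_{0,l+1}(M,A)^*)=0$ and $\M_{0,l+1}(M,A)^*$ is orientable.
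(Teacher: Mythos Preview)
Your overall strategy matches the paper's: apply Theorem~\ref{gcef} to the pair $(2E, 2E^{\wt\tau})$, use the loop-closure argument to kill $D_2^{\spcheck(2E,2E^{\wt\tau})}$, and then pass back to the untwisted moduli space via~(\ref{eq_fsw}). The loop argument you give for $D_2^{\spcheck(2E,2E^{\wt\tau})}\cdot\gamma=0$ is exactly the one in the paper.

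The gap is in your final step, the vanishing of $w_1\bigl(\det\bp^{(2E,2E^{\wt\tau})}_{/i_{(2E,2E^{\wt\tau})}}\bigr)$. Invoking Proposition~\ref{gcdet} leaves you with $w_2(2E^{\wt\tau})\cdot\wt T_\gamma + d_2^{\spcheck(2E,2E^{\wt\tau})}\cdot\gamma$, and neither term is controlled by the hypotheses in the way you claim. The spin structure is on $2E^{\wt\tau}\oplus TL$, not on $2E^{\wt\tau}$, so it does not force $w_2(2E^{\wt\tau})=0$; in fact $w_2(2E^{\wt\tau}) = w_1(E^{\wt\tau})^2$, which need not vanish. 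Likewise, the Maslov divisibility hypothesis concerns $\mu(2E\oplus TM, 2E^{\wt\tau}\oplus TL)$, not $\mu(2E,2E^{\wt\tau})$ alone, so the loop-closure argument does not transfer to $d_2^{\spcheck(2E,2E^{\wt\tau})}$. Lemma~\ref{lem_orm} does not rescue this either: that lemma orients $\det D^{(TM,TL)}$ by \emph{using} the canonical orientation of $\det\bp^{(2E,2E^{\wt\tau})}$ as an input, so citing it here is circular.

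The paper closes this gap with a one-line observation that bypasses Proposition~\ref{gcdet} entirely:
$$
\det\bp^{(2E,2E^{\wt\tau})}_{/i_{(2E,2E^{\wt\tau})}} \;\cong\; \det\bp^{(E,E^{\wt\tau})}_{/i_{(E,E^{\wt\tau})}} \otimes \det\bp^{(E,E^{\wt\tau})}_{/i_{(E,E^{\wt\tau})}}
$$
is the tensor square of a real line bundle, hence canonically oriented, so its $w_1$ vanishes automatically. This is precisely the reason the bundle in the hypothesis is doubled to $2E$ rather than taken as $E$.
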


\begin{proof} 
  If all Maslov indices 
  $$\mu(b)=(\mu(2E,2E^{\wt\tau})+\mu(M,L))\cdot b$$
    are divisible by  $4$,  
  \begin{equation}\label{eq_some} w_1(\widetilde{\mathcal{M}}_{k,l+1}^*(M,A))=w_1(\det \bp^{(2E,2E^{\wt\tau})}_{/i_{(2E,2E^{\wt\tau})}})+\mathfrak{f}^*(w_1(\widetilde{\mathcal{M}}_{k,l+1}))+ U^{\spcheck}
  \end{equation}
  by Corollary \ref{prop_cp} .
If not all Maslov indices are divisible by $4$, but this holds    for $b$ such that $b= \bar{b}\in \H_2(M,L)$, we  show that the contribution from $d_2^{\spcheck(2E\oplus TM, 2E^{\wt\tau}\oplus TL)}\cdot \gamma$ vanishes for all loops $\gamma$ in the moduli space $\M^{*}_{k,l+1}(M,A)$ and (\ref{eq_some}) still holds. The overall contribution from the Maslov indices to the sum expressing $w_1(\det \bp^{(2E\oplus TM, 2E^{\wt\tau}\oplus TL)}_{/i_{(2E\oplus TM, 2E^{\wt\tau}\oplus TL)}})\cdot \gamma$ is     $\sum_{i=1}^s \frac{\mu(b_{i,2})}{2}$ mod $2$; see the proof of Theorem \ref{gc}.   Since $\gamma$ is a loop, the differences  $(b_{i,2}-\bar{b}_{i,2})$ must sum to zero.   This implies 
$$\sum_{i=1}^s b_{i,2}=\sum^s_{i=1}\bar b_{i,2}=\sum_{i=1}^s -\tau_* b_{i,2}=-\tau_* \sum_{i=1}^s b_{i,2}=\ov{\sum_{i=1}^s b_{i,2}}.$$
Therefore, $\mu(\sum_{i=1}^s b_{i,2})\equiv 0$ mod 4, and thus $d_2^{\spcheck(2E\oplus TM, 2E^{\wt\tau}\oplus TL)}\cdot \gamma=0$.
Thus, by Proposition~\ref{gcdet} and (\ref{eq_fsw2}),
$$0=w_1(\det \bp^{(2E,2E^{\wt\tau})}_{/i_{(2E,2E^{\wt\tau})}})+ w_1(\det \bp^{(TM,TL)}_{/i_{(TM,TL)}})$$
 and    as in the proof of Corollary \ref{prop_cp} we obtain (\ref{eq_some})  in this case as well. 
 The result follows by noting that 
 $$\det \bp^{(2E,2E^{\wt\tau})}_{/i_{(2E,2E^{\wt\tau})}}\cong \det \bp^{(E,E^{\wt\tau})}_{/i_{(E,E^{\wt\tau})}}\otimes\det \bp^{(E,E^{\wt\tau})}_{/i_{(E,E^{\wt\tau})}}$$
  is canonically oriented and thus its first Stiefel-Whitney class vanishes. 
 \end{proof}
\begin{remark}
If $L$ is spin and all Maslov indexes are divisible by 4, we can take for $(E,F)$ the 0-dimensional   bundle pair.
\end{remark}

\begin{example}\label{ex_cp} The conditions  of Theorem \ref{cor_rs} are satisfied for the fixed locus of every anti-symplectic involution on    $\CP^{4n-1}$  or on a Calabi-Yau manifold,  provided it is spin. In particular, they are satisfied for $\R \mathbb{P}^{4n-1}$. Theorem \ref{cor_rs}  also applies to $(Q\times Q, \omega\oplus -\omega)$ and $L$ being the graph of a symplectomorphism $f$ on a symplectic manifold  $(Q,\omega)$ with $w_2(Q)=0$; in this case $L$ is the fixed locus of the anti-symplectic involution $(x, y)\mapsto(f^{-1}(y),f(x))$. The conditions are also satisfied for $(\CP^{4n+1},\R\mathbb{P}^{4n+1})$ with $E$ being the hyperplane bundle over $\CP^{4n+1}$.  They also hold for $\CP^1\times \CP^1$ with the standard involution, since  
$$b=\bar b\quad\Rightarrow \quad b=2b'\in H_2(\CP^1\times \CP^1,\R\mathbb{P}^1\times \R\mathbb{P}^1)$$ and thus the Maslov index of $b$ is 0 (mod 4).  This theorem also applies for the complete intersections $X_{n;{\bf a}}$ satisfying the conditions of Corollary~\ref{cor_cp}.
\end{example}

Let $\mathcal{O}(a)$ be the $a$-fold product of the hyperplane line bundle over $\CP^n$ and $\mathcal{O}^\R(a)$ the $a$-fold product of the hyperplane line bundle over $\RP^n$. Define  
    $$V_{n;\bf{a}}=\mathcal{O}(a_1)\oplus\dots\oplus\mathcal{O}(a_m)\quad\text{and}\quad V^\R_{n;\bf{a}}=\mathcal{O}^\R(a_1)\oplus\dots\oplus\mathcal{O}^\R(a_m).$$   
If $s\co\CP^n\ri V_{n;\bf{a}}$ is a  generic real section, $s^{-1}(0)$ is a complete intersection $X_{n;\bf{a}}$;  it is  Calabi-Yau if $n+1=\sum a_i$ and  Fano if $n+1>\sum a_i$. The section $s$ induces a section $\wt s$ of the bundle $$\ind \bp^{(V_{n;\bf{a}} ,V^\R_{n;\bf{a}})}_{/i_{(V_{n;\bf{a}},V^\R_{n;\bf{a}})}}\ri\M_{k,l+1}^*(\CP^n, A)$$ and $\wt s^{-1}(0)=\M_{k,l+1}^*(X_{n;\bf{a}}, A)$. The result of Corollary \ref{cor_cp} implies that under its conditions, $$w_1(\M_{k,l+1}^*(X_{n;\bf{a}}, A))=\mk{f}^*w_1(\M_{k,l+1})+U^{\spcheck}.$$ Moreover, if the open Gromov-Witten invariants are defined for such $X_{n;\bf{a}}$, they can be computed by an Euler class integration as in the classical case; see Proposition \ref{prop_ci}.

    \begin{cor}\label{cor_cp} If
    $n+1\equiv\sum a_i $ mod $2$ and $\sum a_i(a_i-1)\equiv0$ mod $4$, then
$$w_1(\widetilde{\mathcal{M}}_{k,l+1}^*(\CP^n,A))= w_1(\det \bp^{(V_{n;\bf{a}} ,V^\R_{n;\bf{a}})}_{/i_{(V_{n;\bf{a}},V^\R_{n;\bf{a}})}})+\mathfrak{f}^*w_1(\widetilde{\mathcal{M}}_{k,l+1})+U^{\spcheck}. $$
 In particular,
  $$w_1(\widetilde{\mathcal{M}}_{0,l+1}^*(\CP^n,A))=
             w_1(\det \bp^{(V_{n;\bf{a}} ,V^\R_{n;\bf{a}})}_{/i_{(V_{n;\bf{a}},V^\R_{n;\bf{a}})}}).$$ 
\end{cor}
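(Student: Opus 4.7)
The plan is to apply Theorem~\ref{gcef} with $M=\CP^n$, $L=\RP^n$, and bundle pair $(E,F)=(V_{n;\bf a},V^\R_{n;\bf a})$, and then combine with~\eqref{eq_fsw}, which in $\Z_2$-coefficients reads
\[
w_1(\widetilde{\mathcal{M}}_{k,l+1}(\CP^n,A)^*) \;=\; w_1\bigl(\widetilde{\mathcal{M}}^{(V_{n;\bf a},V^\R_{n;\bf a})}_{k,l+1}(\CP^n,A)^*\bigr) \;+\; w_1\bigl(\det\bp^{(V_{n;\bf a},V^\R_{n;\bf a})}_{/i_{(V_{n;\bf a},V^\R_{n;\bf a})}}\bigr).
\]
Hence the task reduces to proving $w_1(\widetilde{\mathcal{M}}^{(V,V^\R)}_{k,l+1}(\CP^n,A)^*)=\mk f^*w_1(\widetilde{\mathcal{M}}_{k,l+1})+U^{\spcheck}$, i.e.\ that every other term in the formula of Theorem~\ref{gcef} vanishes.

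Using $w(\mathcal O^\R(a_i))=1+a_i h$ and $w(T\RP^n)=(1+h)^{n+1}$ in $H^*(\RP^n;\Z_2)$, where $h$ generates $H^1$, the first hypothesis $n+1\equiv\sum a_i\pmod{2}$ immediately yields $w_1(V^\R_{n;\bf a}\oplus T\RP^n)=0$. Consequently $F\oplus TL$ is orientable; the term $(c_1(V_{n;\bf a}\oplus T\CP^n)\cdot A+1)w_1(F\oplus TL)\,\ev_{x_1}(\gamma)$ in Theorem~\ref{gcef} drops, every Maslov index $\mu(V_{n;\bf a}\oplus T\CP^n,V^\R_{n;\bf a}\oplus T\RP^n)\cdot b$ is even, and the divisors $D_1^{\spcheck(V,V^\R)}$ and $O_{x_1}^{\spcheck(V,V^\R)}$ are empty.

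It remains to show $w_2(V^\R_{n;\bf a}\oplus T\RP^n)\cdot\widetilde T_\gamma+D_2^{\spcheck(V,V^\R)}\cdot\gamma\equiv 0\pmod{2}$ for every loop $\gamma$, in the spirit of the loop argument in Theorem~\ref{cor_rs}. A Whitney-sum computation gives $w_2(V^\R_{n;\bf a}\oplus T\RP^n)=\binom{m'+n+1}{2}h^2$ where $m'=\#\{i:a_i\text{ odd}\}$; and since $a_i^2\equiv(a_i\bmod 2)\pmod{4}$, the second hypothesis $\sum a_i(a_i-1)\equiv 0\pmod{4}$ gives $m'\equiv\sum a_i\pmod{4}$. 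Setting $K=m'+n+1\equiv s+n+1\pmod{4}$ with $s=\sum a_i$, the first hypothesis forces $K$ even; if $K\equiv 0\pmod{4}$ then $\binom{K}{2}\equiv 0\pmod{2}$ and every Maslov index $(s+n+1)d_{i,2}\equiv 0\pmod{4}$, so both terms vanish separately; if $K\equiv 2\pmod{4}$ then $\binom{K}{2}\equiv 1\pmod{2}$ while $D_2^{\spcheck(V,V^\R)}\cdot\gamma\equiv\sum_i d_{i,2}\pmod{2}$, where $d_{i,2}$ is the doubled degree of the second-bubble class at the $i$-th codimension-one crossing, so the desired vanishing reduces to the geometric identity
\[
h^2\cdot\widetilde T_\gamma\;\equiv\;\sum_i d_{i,2}\pmod{2}.
\]

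The main obstacle is verifying this last identity. It should follow from the fact that the boundary of a disk of doubled degree $d$ in $\RP^n$ represents $d\cdot[\RP^1]\in H_1(\RP^n;\Z_2)$, combined with a description of how the 2-cycle $\widetilde T_\gamma$ is assembled from these boundary circles as $\gamma$ traverses the codimension-one strata, then paired against a representative $\RP^{n-2}$ of the Poincar\'e dual of $h^2$. The ``in particular'' assertion for $k=0$ is immediate: $\widetilde{\mathcal M}_{0,l+1}$ is orientable by Theorem~\ref{cor_dmor}, so $\mk f^*w_1(\widetilde{\mathcal M}_{0,l+1})=0$, and the stratum $U$ is vacuous in the absence of boundary marked points.
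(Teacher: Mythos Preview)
Your overall strategy is sound and you correctly reduce the problem via~\eqref{eq_fsw} and Theorem~\ref{gcef}. The computations of $w_1$, $w_2$, and the Maslov indices are accurate, and your case split on $K\bmod 4$ is the right one. However, there is a genuine gap in the case $K\equiv 2\pmod 4$: you reduce everything to the identity $h^2\cdot\widetilde T_\gamma\equiv\sum_i d_{i,2}\pmod 2$ and then do not prove it. The sketch you offer (tracking boundary circles through codimension-one crossings and intersecting with $\RP^{n-2}$) is not a proof; making it precise would require a careful analysis of how $\widetilde T_\gamma$ is built from the arcs of boundary circles and the bubbled-off boundaries, and this is not immediate from anything established in the paper.

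The paper sidesteps this difficulty entirely. Rather than working with $(V_{n;\bf a},V^\R_{n;\bf a})$ directly and arguing cancellation, it applies Corollary~\ref{prop_cp} with a bundle chosen so that \emph{both} $w_2(F\oplus TL)$ and the Maslov divisibility condition hold on the nose: when $\sum a_i+n+1\equiv 0\pmod 4$ it takes $(E,F)=(V_{n;\bf a},V^\R_{n;\bf a})$, and when $\sum a_i+n+1\equiv 2\pmod 4$ it takes $(E,F)=(V_{n;\bf a}\oplus 2\mathcal O(1),\,V^\R_{n;\bf a}\oplus 2\mathcal O^\R(1))$. The extra $2\mathcal O(1)$ shifts both $K$ and the Chern/Maslov number by $2$, pushing them to $0\pmod 4$, so $w_2=0$ and $D_2^{\spcheck}$ is empty; Corollary~\ref{prop_cp} then applies directly. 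Finally, since $\det\bar\partial^{(2\mathcal O(1),2\mathcal O^\R(1))}$ is a square and hence canonically oriented, the $w_1$ of the determinant bundle for $V_{n;\bf a}\oplus 2\mathcal O(1)$ agrees with that for $V_{n;\bf a}$, giving the stated formula. This trick---stabilizing the bundle to kill the obstruction rather than proving a cancellation---is exactly the maneuver used in the proof of Theorem~\ref{cor_rs}, and it closes your gap without any geometric identity for $\widetilde T_\gamma$.
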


 \begin{proof} 
The conditions $n+1\equiv\sum a_i$ mod $2$ and $\sum a_i( a_i-1)\equiv 0$ mod $4$ ensure that we can apply Corollary~\ref{prop_cp} with 
$$(E, F)=\begin{cases} (V_{n;\bf{a}} ,V^\R_{n;\bf{a}}),&
\text{if}~ \sum a_i+n+1\equiv0~ \text{mod}~ 4\\ 
(V_{n;\bf{a}}\oplus 2\mathcal{O}(1) ,V^\R_{n;\bf{a}}\oplus 2\mathcal{O}^\R(1)),& \text{if}~  \sum a_i+n+1\equiv2~ \text{mod}~ 4.
\end{cases}
$$
 In the latter case, we also note that $$w_1(\det \bp^{(V_{n;\bf{a}}\oplus 2\mathcal{O}(1) ,V^\R_{n;\bf{a}}\oplus 2\mathcal{O}^\R(1))}_{/i_{(V_{n;\bf{a}}\oplus 2\mathcal{O}(1) ,V^\R_{n;\bf{a}}\oplus 2\mathcal{O}^\R(1)})} )=w_1(\det \bp^{(V_{n;\bf{a}}  ,V^\R_{n;\bf{a}})}_{/i_{(V_{n;\bf{a}}  ,V^\R_{n;\bf{a}} )}}),$$ similarly to  the proof of Theorem \ref{cor_rs}. 
\end{proof}

\section{Open Gromov-Witten disk invariants}\label{sec_ogw}

In the first part of this section, we define   open Gromov-Witten invariants for  strongly semi-positive $\tau$-orientable manifolds. They provide a favourable environment in which  we can define a Gromov-Witten type pseudocycle using the more classical approach of $(J,\nu)$-holomorphic maps.
  As mentioned in Section~\ref{sec_intro}, one can go beyond the strongly semi-positive case by using  more sophisticated methods to achieve transversality such as Kuranishi structures \cite{FOOO, FO, LT}, polyfolds \cite{HWZ}, or systematic stabilizations~\cite{IP2}.  
In the second part, we describe the dependence of the invariants on the orienting choices and     define a refined invariant in the case when the fixed loci of the real maps represent more than one homology class in $H_1(L;\Z_2)$.

\subsection{Definition of invariants }

\begin{definition}\label{def_ssp}
A symplectic manifold $(M,\omega)$ is called 
\begin{enumerate}[label=(\arabic*), leftmargin=*] 
  \item \textsf{semi-positive} if $c_1(A)\geq 0 $ for all classes $A\in\pi_2(M)$ such that  $\omega (A)>0$ and $2c_1(A)\geq 6 -\dim(M)$,
  \item \textsf{strongly semi-positive} if $c_1(A)> 0 $ for all classes $A\in\pi_2(M)$ such that $\omega (A)>0$ and $2c_1(A)\geq 4 -\dim(M)$.
\end{enumerate}  
\end{definition}

In particular, all the relevant first Chern classes are strictly greater than zero in the strongly semi-positive case.  One defines   the classical Gromov-Witten invariants in the semi-positive case  by considering $(J,\nu)$-holomorphic maps and reducing the multiply-covered maps: the images of the latter  under the evaluation maps are contained in the image of the  reduced space, which has codimension at least two due to the semi-positive condition. This is sufficient for the image of the moduli space to define a pseudocycle. Similarly, under the strongly semi-positive condition, the image of the $\tau$-multiply covered maps is contained in a subspace of  codimension at least two, and thus  the image of $\widetilde{\mathcal{M}}_{k,l+1}(M,A)$ defines a pseudocycle.

\begin{definition}\label{def_adm} Let $(M,\omega)$ be a compact symplectic manifold with an anti-symplectic involution $\tau$ with non-empty fixed locus $M^\tau=L$. We say $(M,\omega,\tau)$ is \textsf{admissible} if
 it is 
 $\tau$-orientable and $M$ is strongly semi-positive.
   \end{definition}
 
The strongly semi-positive condition ensures that the image of the 
multiply-\!\!\! covered maps is of sufficiently large codimension. The $\tau$-orientable   condition implies that the first Stiefel-Whitney class of the moduli space of $\tau$-semi-simple maps is equal to $\mk f^*w_1(\M_{k,l+1})+~U^{\spcheck}$; see Theorem \ref{cor_rs}.  In particular, if there are no boundary marked points,   the moduli space $\M_{0,l+1}^*(M,A)$ is orientable.

\begin{lem} \label{lem_orm} If $(M,\omega, \tau)$ is $\tau$-orientable, a choice of a $\tau$-orienting structure determines   an orientation of the relative determinant bundle of (\ref{eq_indbl}), denoted by $\det D^{(TM,TL)}_{/i_{(TM,TL)}}$, over every point in   $\M_{k,l+1}^*(M,A)-U$; this orientation  varies continuously with the point.
\end{lem}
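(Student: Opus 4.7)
The plan is to orient $\det D^{(TM,TL)}_{/i_{(TM,TL)}}$ pointwise on $\M_{k,l+1}(M,A)^*-U$ using the $\tau$-orienting structure, and then verify continuity—both on the interior of each piece $\wt{\mk M}_{k,l+1}(M,b)^*$ and across every codimension one stratum not contained in $U$.

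\emph{Pointwise orientation.} At a point $u \in \M_{k,l+1}(M,A)^*-U$ lying over some component $\wt{\mk M}_{k,l+1}(M,b)^*$, the direct sum decomposition of real Cauchy--Riemann operators
\[
D^{(2E\oplus TM,\, 2E^{\wt\tau}\oplus TL)}_u \;=\; D^{(2E,\, 2E^{\wt\tau})}_u \,\oplus\, D^{(TM, TL)}_u
\]
induces a canonical isomorphism
\[
\det D^{(2E\oplus TM,\, 2E^{\wt\tau}\oplus TL)}_u \;\cong\; \det D^{(2E,\, 2E^{\wt\tau})}_u \,\otimes\, \det D^{(TM, TL)}_u.
\]
The first factor on the right is canonically $\bigl(\det D^{(E,\, E^{\wt\tau})}_u\bigr)^{\otimes 2}$ and hence canonically oriented. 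By the bundle-pair version of \cite[Proposition~4.9]{Geo1}, the left-hand side is oriented by trivializations of $(2E^{\wt\tau}\oplus TL)\oplus 3\det(2E^{\wt\tau}\oplus TL)$ over $u(\prt D^2)$ and of $\det(2E^{\wt\tau}\oplus TL)$ at $u(x_1)$. The chosen spin structure on $2E^{\wt\tau}\oplus TL$ supplies the first (up to canonical homotopy) and, together with the orientation of $L$, the second. Transporting the resulting orientation through the displayed factorization orients $\det D^{(TM,TL)}_u$.

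\emph{Independence and continuity.} By the change-of-trivialization formula in \cite[Proposition~4.9]{Geo1}, altering the $\det(2E^{\wt\tau}\oplus TL)$-trivialization at $u(x_1)$ changes the induced orientation by $(-1)^{\mu(b)+1}$, where $\mu(b) := \bigl(\mu(2E,2E^{\wt\tau}) + \mu(M,L)\bigr)\cdot b$. When $4 \mid \mu(b)$ this alteration is trivial; otherwise the prescribed choice of representative $b_i$ for the coset of $b$ in $H_2(M,L;\Z)/\text{Im}(\id + \tau_*)$ resolves the ambiguity uniformly. Continuity on each component $\wt{\mk M}_{k,l+1}(M,b)^*$ is automatic, as the spin-induced trivializations and the operator decomposition vary continuously with $u$.

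\emph{Gluing across codimension one strata and the main obstacle.} From~(\ref{eq_idmf}) and Theorem~\ref{cor_rs},
\[
w_1\!\bigl(\det D^{(TM, TL)}_{/i_{(TM,TL)}}\bigr) \;=\; w_1\!\bigl(\M_{k,l+1}(M,A)^*\bigr) + \mk f^* w_1\!\bigl(\M_{k,l+1}\bigr) \;=\; U^{\spcheck}
\]
in $\Z_2$-cohomology, so the relative determinant is globally orientable on $\M_{k,l+1}(M,A)^*-U$. The main obstacle is verifying that the pointwise construction above \emph{is} this global orientation, i.e. that the local choice glues consistently across each codimension one stratum outside $U$. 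For such a stratum, what is needed is the sign identity
\[
\epsilon\bigl(g_{(2E\oplus TM,\, 2E^{\wt\tau}\oplus TL)}\bigr) \;\equiv\; \epsilon\bigl(g_{(2E,\, 2E^{\wt\tau})}\bigr) + \epsilon\bigl(g_{(TM,\, TL)}\bigr) \pmod 2,
\]
with the three relative signs given by Corollary~\ref{cor_msef}, by the triviality of the canonical square, and by Corollary~\ref{cor_ms} respectively. This reduces to the same bookkeeping as in the proof of Theorem~\ref{cor_rs}: the residual Maslov-mod-$4$ contributions across each identification $b_2 \leftrightarrow \bar b_2$ sum to zero around every loop, precisely by the divisibility hypothesis on classes with $b = -\tau_*b$ combined with the choice of representatives $b_i$, which is exactly what the definition of a $\tau$-orienting structure was engineered to supply.
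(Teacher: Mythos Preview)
Your overall approach—orient $\det D^{(2E\oplus TM,\,2E^{\wt\tau}\oplus TL)}$ from the spin structure, use the canonical orientation on the square $\det D^{(2E,\,2E^{\wt\tau})}\cong(\det D^{(E,\,E^{\wt\tau})})^{\otimes 2}$, and divide—is exactly the paper's. But there is a genuine gap in how you use the representatives~$b_i$.

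The second paragraph misplaces their role. Since $L$ is orientable (this is part of $\tau$-orientability), so is $2E^{\wt\tau}\oplus TL$; hence the trivialization of its determinant at $u(x_1)$ is already fixed by the orientation of $L$, and there is no pointwise ambiguity for the $b_i$ to resolve. (Your computation is also off: $\mu(b)$ is even here, so $(-1)^{\mu(b)+1}=-1$ regardless of divisibility by~$4$.)

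The real issue is in the gluing paragraph: you conflate ``the bundle is orientable on $\M_{k,l+1}(M,A)^*-U$'' (which is what Theorem~\ref{cor_rs} gives) with ``the specific orientation I built on each piece $\wt{\mk M}_{k,l+1}(M,b)^*$ glues continuously''. By Lemma~\ref{lem_det}, the relative sign of $i_{(2E\oplus TM,\,2E^{\wt\tau}\oplus TL)}$ at a stratum with second-bubble class $b_2$ is $(-1)^{\mu(b_2)/2}$, which equals $-1$ whenever $\mu(b_2)\equiv 2\pmod 4$; at such strata your spin-induced orientation does \emph{not} glue. What the $b_i$ actually supply is a \emph{twist}: over the piece indexed by $b$, multiply the spin-induced orientation by $(-1)^{\mu(b')/2}$, where $b'$ is determined by writing $b=b_i-b'+\bar b'$ for the unique chosen representative $b_i$ of the coset of $b$. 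This is well defined because any two such $b'$ differ by a class fixed by $-\tau_*$, whose Maslov index is divisible by $4$ by hypothesis; and across a gluing $b_1+b_2\leadsto b_1+\bar b_2$ the twist changes by exactly $(-1)^{\mu(b_2)/2}$, cancelling the sign of $i$. Without this explicit twist, your argument is incomplete when not all Maslov indices of $(2E\oplus TM,\,2E^{\wt\tau}\oplus TL)$ are divisible by~$4$.
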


\begin{proof} Let $(E,\wt\tau)$ be as in Definition \ref{def_tori}.
 A choice of  a spin structure on the bundle $2E^{\wt\tau}\oplus TL$ induces an orientation on  the   determinant bundle 
 $$
 \det \bp^{(2E\oplus TM, 2E^{\wt \tau}\oplus TL)}\ri \wt{\mk{M}}^{*}_{k,l+1}(M,b)\qquad \forall b\in \H_2(M,L);
 $$ 
 see \cite[Proposition 8.1.4]{FOOO}. If all Maslov indices are divisible by 4, this orientation is continuous across all codimension-one strata except $U$; see Corollary \ref{cor_det}.
  If not all Maslov indices are divisible by 4, 
  we twist the orientation of
  $\det \bp^{(2E\oplus TM, 2E^{\wt \tau}\oplus TL)}$ over $\wt{\mk{M}}^{*}_{k,l+1}(M,b)$
  using the representatives $b_i$ in Definition \ref{def_tori} as follows.
  There is a unique $b_i$ such that $b=b_i-b'+\bar b'$ for some $b'\in H_2(M,L;\Z)$. We twist the orientation by $(-1)^{\frac{\mu(b')}{2}}$; this is well-defined since the Maslov indices of two such $b'$ differ by a multiple of 4 under our assumptions. 
   This makes the orientation of $\det \bp^{(2E\oplus TM, 2E^{\wt \tau}\oplus TL)}_{/i_{(2E\oplus TM, 2E^{\wt \tau}\oplus TL)}}$ continuous over $\M^{*}_{k,l+1}(M,A)-U$ in this case as well. Together     
   with the canonical orientation of
  $$\det \bp_{/i_{(2 {E},2 {E}^{\wt\tau})}}^{(2 {E},2 {E}^{\wt\tau})}\cong
  \det \bp_{/i_{( {E}, {E}^{\wt\tau})}}^{( {E}, {E}^{\wt\tau})}\otimes
  \det \bp_{/i_{( {E}, {E}^{\wt\tau})}}^{( {E}, {E}^{\wt\tau})},
  $$ this induces an orientation on $\det D^{(TM,TL)}_{/i_{(TM,TL)}}\cong \det \bp^{(TM,TL)}_{/i_{(TM,TL)}}$ over $\M_{k,l+1}^*(M,A)-U$ via the isomorphism
  $$
  \det \bp^{(2E\oplus TM, 2E^{\wt \tau}\oplus TL)}_{/i_{(2E\oplus TM, 2E^{\wt \tau}\oplus TL)}}\cong \det \bp_{/i_{(2 {E},2 {E}^{\wt\tau})}}^{(2 {E},2 {E}^{\wt\tau})}\otimes\det \bp^{(TM,TL)}_{/i_{(TM,TL)}}.
  $$ 
  This establishes the proof.
\end{proof}

\begin{proof}[{\bf \emph{Proof of Theorem \ref{thm_ogw0}}}] \ref{it_ori}
  By Lemma \ref{lem_orm},  a choice of a $\tau$-orienting structure induces an orientation on $\det D^{(TM,TL)}_{/i_{(TM,TL)}}\ri \M_{0,l+1}^*(M,A)$. 
  By Proposition~\ref{cor_dmor}, the moduli space of domains $\M_{0,l+1}$ is orientable  and has a canonical orientation. Together they induce an orientation on the moduli space via 
    \begin{equation}\label{eq_ciso}
    \Lt \M_{0,l+1}^*(M,A)\cong \det D^{(TM,TL)}_{/i_{(TM,TL)}}\otimes \mk f^* \Lt \M_{0,l+1}.
    \end{equation}
\ref{it_inv} The regularity of the pair $(J,\nu)$ implies that we have transversality for all  maps with stable domains and none of them is $\tau$-multiply covered. We have 
$$\M_{0,l+1}(M,A)-\M_{0,l+1}^*(M,A)= \{\text{$\tau$-multiply covered maps}\}$$ 
and 
$$\ov{\M_{0,l+1}^*(M,A)}\subset \M_{0,l+1}(M,A).$$
 Thus,  
 $$\ov{\M_{0,l+1}^*(M,A)}-\M_{0,l+1}^*(M,A)\subset\{\text{$\tau$-multiply covered maps}\}.$$
 The image of the $\tau$-multiply covered maps under $\ev$ or $\ev\times \mk{f}$ is contained in the image of the reduced maps and has codimension at least two by the strongly semi-positive assumption.  
 By Theorem \ref{nb}\ref{it_nb}, $\M_{0,l+1}^*(M,A)$ has no boundary for $A\in\mathcal{A}$. Thus, by   \cite[Lemma 3.5]{Zin06}, the image of $\M_{0,l+1}(M,A)$ under $\ev$ and $\ev\times \mk{f}$ defines a homology class.  

 Two regular pairs $(J_1,\nu_1)$ and $(J_2,\nu_2)$   in $\mathcal{J}_\R$  can be connected by a path. For a generic such path, the image of the boundary of  the set of $\tau$-semi-simple maps in the universal moduli space over this path is again contained in a space of codimension at least two; this follows  by the strongly semi-positive condition which ensures that all $J_t$-holomorphic maps along this path have strictly positive Chern/Maslov class; see \cite[Lemma 6.4.4]{MS}. Thus, by   \cite[Lemma 3.6]{Zin06},  the two homology classes defined by $(J_1,\nu_1)$ and $(J_2,\nu_2)$ are  the same. A similar cobordism argument holds for a strongly semi-positive deformation of~$\omega$.
\end{proof}

\begin{example}\label{ex_crcex}
Let $(M,L)=(\CP^{2n-1},\R \mathbb{P}^{2n-1})$.   All odd-degree classes   $A$ belong to~$\mathcal{A}$. 
The moduli space $\M_{0,l+1}$ is orientable, and hence we can use the dual of its fundamental class as the constraint $h^{DM}$. 
Since  $\M_{0,l+1}(\CP^{2n-1},A)$ is isomorphic to the moduli space of real sphere maps,   the number $\OGW_{A,0,l+1}(\mathbf{h})$, defined immediately after Theorem~\ref{thm_ogw0},  can be interpreted in this case as the number of real spheres in the class~$A$ passing through the Poincare duals of $h^M_1,..,h^M_{l+1}$. In particular, if we take $A$ to be the class of a line, $l=1$, and $\mathbf{h}=(\text{pt}^{\spcheck}, H^{\spcheck}, [\M_{0,2}]^{\spcheck})$, where $H$ is the homology class of  a hyperplane, then $\OGW_{A,0,2}(\mathbf{h})$ counts the number of real spheres   passing through a complex point and intersecting a hyperplane. The real condition forces the curve to also pass through the complex conjugate of the point, and hence there is only one such curve. Thus, the number 
$$|\OGW^{\CP^{2n-1}}_{\text{line},0,2}(\text{pt}^{\spcheck}, H^{\spcheck}, [\M_{0,2}]^{\spcheck})|=1.$$ Note that $z_0$ must go to $\text{pt}$ and {\it not}    its conjugate.  A complete recursion determining the rest of the invariants is obtained in \cite{GZ2}.
\end{example}

   If $k>0$, we define the invariant as a signed count of maps passing through prescribed constraints still under the assumption that  $(M, \omega,\tau)$ is admissible. 
   In order to define the sign at such maps, we orient  $\det D$ over $\M_{k,l+1}^*(M,A)-U$ as in Lemma \ref{lem_orm}. We define a local orientation at a smooth point $C$ in the moduli space of domains using the canonical orientation of the component $\wt{\mk{M}}_{k,l+1}$ the curve C belongs to. We use this local orientation and the orientation on the determinant bundle to induce local orientation on the smooth points in the moduli space of maps which lie over $C$ via the isomorphism~(\ref{eq_ciso}). 
 Let $A_i, B_j $, and $\Gamma$, for $i=1,\dots, l+1, j=1,\dots, k,$ be oriented manifolds representing   homology classes in $H_*(M;\Z), H_*(L;\Z)$, and $H_*(\M_{k,l+1};\Z)$, respectively, and let $\alpha_i, \beta_j$, and $\gamma$ be the Poincare duals of these classes. Let $|\alpha|$ denotes the degree of a cohomology class $\alpha$ and suppose 
 \begin{equation}\label{eq_dimc}
 \sum_i|\alpha_i|+\sum_j|\beta_j|+|\gamma| =\dim \M_{k,l+1}(M,A).
 \end{equation}
  Under generic conditions,   the image $(\ev\times\mk{f})(\M_{k,l+1}(M,A))$ intersects the image of the product $ \prod_iA_i\times\prod_jB_j\times \Gamma$ at finitely many points whose preimages in $\M_{k,l+1}(M,A)$ are maps with   smooth domains.  For such a preimage point $u\in  \M_{k,l+1}(M,A)$, denote by $\mk s(u) $ the sign   of the isomorphism at $u$
 $$
 T(\M_{k,l+1}(M,A))\times\prod_iT(A_i)\times\prod_jT(B_j)\times T(\Gamma)\cong T(M^{l+1}\times L^k\times \M_{k,l+1}),
 $$ 
where the two sides are oriented using the local orientations described above.
 This sign is independent of the choice of local orientation on the moduli space of domains.

 \begin{proof}[{\bf \emph{Proof of Theorem \ref{thm_ogwk}}}]
  Let $(J_1,\nu_1)$ and $(J_2,\nu_2)$ be two regular pairs in $\mathcal{J}_\R$ and $(J_t,\nu_t)$ be a generic path between them. As in the proof of Theorem \ref{thm_ogw0}, the image of the $\tau$-multiply covered maps is of   codimension at least 2 and in a generic path they are  avoided. Since $k>0$, there is no codimension-one sphere bubbling. 
  Thus, along the path $\{J_t\}$ the intersection points will form a one-dimensional bordism, crossing a finite number of codimension-one strata  of the moduli space.  We define the local orientations on the precompactified universal moduli space as we did for a fixed $J$ and orient the corresponding pieces of the bordism   by the surjection  
 \begin{align*}
 T(\M_{A,k,l+1}(M,A; \{J_t\}))\times\prod_iT(A_i)&\times\prod_jT(B_j)\times T(\Gamma)\\&\ri T(M^{l+1}\times L^k\times \M_{k,l+1});
 \end{align*}
 the  restriction of this surjection  to $J_0$ gives   the sign $-\mk s(u;J_0)$ and  the restriction to   $J_1$ gives $\mk s(u;J_1)$. We first show that if we do not cross the stratum $U$, the described orientation is continuous across the codimension-one strata and thus the count at $J_0$ equals the count at $J_1$;  the orientation is clearly continuous if we do not cross codimension-one strata. If we cross a codimension-one stratum where  the bubbled domain is stable, the changes of the local orientations on the moduli spaces of maps and domains are the same; see the proof of Theorem~\ref {gc}.   There is no change in the local orientation on the moduli space of maps if the domain of the second bubble is unstable, but does not have any boundary marked points, which means the orientation is continuous across such a stratum as well. \\

If there is a single boundary marked point on the second bubble,  the local orientation on the moduli space of maps changes, whereas on the moduli space of domains it does not; this is the stratum $U$ and the orientation is not continuous across it. 
 We show that under the assumptions   of the theorem  $U$ is never crossed. Suppose that in a one parameter family the cut-down moduli space bubbles into classes $b_1$ and $b_2$ with  a single boundary marked point on the second bubble with image on $B_{j_0}$. 
 By a dimension count, this happens  only~if 
 \begin{align*}
 \mu(b_1)+n-3+k+2(l+1)&\geq \sum_i |\alpha_i|+\sum_{j\neq j_0}|\beta_j|+|\gamma|\quad\text{and}\\ \mu(b_2)+n-3+2&\geq |\beta_{j_0}|.
 \end{align*}
   Adding $|\beta_{j_0}|$ to the first inequality and using (\ref{eq_dimc}), we get  
 \begin{gather*}
 \mu(b_1)+n-3+k+2(l+1)+|\beta_{j_0}|\geq \mu(b)+n-3+k+2(l+1). 
 \end{gather*}
Since $\mu(b)=\mu(b_1)+\mu(b_2)$, this is equivalent to $|\beta_{j_0}|\geq \mu(b_2).$ 
  However,  $$\mu(b_2)\geq c_{\min}>|\beta_{j_0}|,$$ by assumption.  Thus, $U$ is never crossed.
  A similar cobordism argument holds for a strongly semi-positive deformation of $\omega$ and for a change of the representatives of the homology classes.
  \end{proof}

If the moduli space of domains $\M_{k,l+1}$ is orientable, we can use  its fundamental class as a (trivial) constraint and   obtain a count of the maps passing through constraints only from $M$ and $L$, i.e.~a primary Gromov-Witten invariant. For example, 
$$|\OGW^{\CP^{2n-1}}_{\text{line},2,1}(\text{pt}^{\spcheck}, \text{pt}^{\spcheck}, H^{\spcheck}, [\M_{2,1}]^{\spcheck})|=1,$$
where pt$^{\spcheck}$ is the Poincare dual of a point in $\RP^{2n-1}$; see Example \ref{ex_crcex}. When $\M_{k,l+1}$ is not orientable, however, $H_{\text{top}}(\M_{k,l+1};\Z)=0$, and we have to use different constraints in order to have an invariant count. This can be interpreted as a restriction on the domains of the maps.  The next example demonstrates the non-triviality of the invariants in this case.

\begin{example}\label{bpts_ex} Let $\Gamma$ be the stratum of $\M_{1,l+1}$, $l\!>\!1$, composed of nodal domains  having a disk with one boundary marked point attached to a sphere with $l\!+\!1$ marked points, all decorated with a $+$. This is a closed oriented codimension 2 stratum of $\M_{1,l+1}$ and thus represents a $\Z$-homology class. Let $\text{cp}^{\spcheck}$ be the dual of a point and~$H^{\spcheck}$ the dual of a hyperplane in $\CP^{2n-1}$ and $\text{rp}^{\spcheck}$ the dual of a point in $\RP^{2n-1}$. Let $\beta_1,\dots, \beta_{l}\in H^*(\CP^{2n-1})$ be such that $\sum_{i=1}^l|\beta_i|= 4nd\!-\!4\!+\!2l$. Then, 
$$
|\OGW^{\CP^{2n-1}}_{2d, 1, l+1}(\text{rp}^{\spcheck}, \beta_1,\dots,\beta_l , H^{\spcheck}, [\Gamma]^{\spcheck})| = \text{GW}^{\CP^{2n-1}}_{d, l+2}(\beta_1,\dots, \beta_l, H^{\spcheck}, \text{cp}^{\spcheck}),
$$
where the number on the right-hand side of the equality is the classical Gromov-Witten invariant of $\CP^{2n-1}$ in degree $d$ passing through a point, a hyperplane, and the duals of $\beta_1, \dots, \beta_l$. This equality holds since $\Gamma$ restricts the domains of the maps in such a way that only maps which are constant on the disk component will contribute; all other possibilities vanish for dimensional reasons.
\end{example}

In order to define an invariant count of maps without restricting the domains,   one has to
show
that in a one-parameter family the cut-down moduli space does not cross boundary divisors which contribute to the first Stiefel-Whitney class of $\M_{k,l+1}(M,A)$.
This 
 is possible in dimension 4 and 6 as shown in 
 \cite{Cho, Sol}  but the approach does not extend to higher dimensions;  we  illustrate it in the following example.

\begin{example}[{\cite[Section 3]{Cho}, \cite[Section 6]{Sol}}]\label{ex_Jake} Suppose $(M,L)=(\CP^3, \RP^3)$,  the class $A$ has degree $d$, and     only real point constraints at the boundary marked points and hyperplane constraints at the interior points are used. The dimension formula implies that the number of boundary marked points $k$ must equal~$2d$. Suppose in a one-parameter family the cut-down moduli space bubbles into classes of degrees $d_1$ and $d_2$ with $k_1$ and $k_2$ boundary marked points, respectively. By a dimension count, this stratum appears only if $4d_1+1\geq 2k_1$ and $4d_2+1\geq 2k_2$.
 We add $2k_2$ to the first inequality to obtain
\[ 4d_1+1+2k_2\geq 2k_1+2k_2=2k=4d\Rightarrow  2k_2\geq 4d_2-1.\]
This together with the second inequality implies that $k_2=2d_2$ is even, and so should be $k_1$. Thus, we never cross strata which contribute to the first Stiefel-Whitney class of $\M_{k,l+1}(M,A)$. This computation, originally appearing in \cite[Section 3]{Cho} and \cite[Section 6]{Sol}, relies heavily on the fact that the dimension of the Lagrangian is small and the degrees of the cohomology constraints are big, which gives a strong relation between the possible combinations of Maslov indices and marked point splits we encounter in a one-parameter family. This relation quickly   weakens with  the increase of the dimension. 
\end{example}
 
\begin{prop} \label{prop_ci} Let $X_{n;\bf{a}}$ be a Fano complete intersection satisfying the conditions of  Corollary \ref{cor_cp},   $A\in\mathcal{A}$, and  $${\bf h}=(h_1,\dots, h_{l+1}, h^{DM})\in H^*(\CP^{n};\Z)^{\oplus (l+1)}\oplus H^*(\M_{0,l+1};\Z).$$ Then,
\[
\OGW^{X_{n;\bf{a}}}_{A,0,l+1}({\bf h})= \int_{[\M_{0,l+1}(\CP^{n},A)]} e(\ind \bp^{(V_{n;\bf{a}} ,V^\R_{n;\bf{a}})}_{/i_{(V_{n;\bf{a}},V^\R_{n;\bf{a}})}})\wedge h_1\wedge\dots\wedge h_{l+1}\wedge h^{DM}.
\]

\end{prop}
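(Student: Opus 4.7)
The plan is to adapt the classical Euler class computation for complete intersections (as in \cite[\S7.2]{MS98}) to the real bordered setting developed in the preceding sections. First I would verify that $X_{n;\bf{a}}$ is itself admissible, so that $\OGW^{X_{n;\bf{a}}}_{A,0,l+1}({\bf h})$ is a well-defined invariant via Theorem~\ref{thm_ogw0}. The Fano hypothesis $n+1>\sum a_i$ gives that $X_{n;\bf{a}}$ is strongly semi-positive for the restricted symplectic form, while the adjunction sequence
\[
0\to TX_{n;\bf{a}}\to T\CP^n|_{X_{n;\bf{a}}}\to V_{n;\bf{a}}|_{X_{n;\bf{a}}}\to 0
\]
together with the numerical conditions of Corollary~\ref{cor_cp} passes the $\tau$-orientability of $(\CP^n,\RP^n)$ to $(X_{n;\bf{a}},X_{n;\bf{a}}^\tau)$ (adjusting the auxiliary bundle~$E$ of Definition~\ref{def_tori} by $V_{n;\bf{a}}$ as needed).

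Next I would construct a canonical section $\wt s$ of the bundle $\ind\bp^{(V_{n;\bf{a}},V^\R_{n;\bf{a}})}_{/i_{(V_{n;\bf{a}},V^\R_{n;\bf{a}})}}$ over $\M_{0,l+1}(\CP^n,A)$. For a bordered $(J,\nu)$-holomorphic map $u\co(D^2,\partial D^2)\to(\CP^n,\RP^n)$ and the real holomorphic section $s$ cutting out $X_{n;\bf{a}}$, the composition $s\circ u$ is a section of $u^*V_{n;\bf{a}}$ with boundary values in $u^*V^\R_{n;\bf{a}}$ lying in $\Ker\bp^{(u^*V_{n;\bf{a}},u^*V^\R_{n;\bf{a}})}$. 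The analogue of Lemma~\ref{lem_dbl} applied to the bundle pair $(V_{n;\bf{a}},V^\R_{n;\bf{a}})$ shows this assignment is intertwined with $g_{(V_{n;\bf{a}},V^\R_{n;\bf{a}})}$ across codimension-one boundary strata, so $\wt s$ descends to a continuous section of the glued index bundle. By construction $\wt s^{-1}(0)=\M_{0,l+1}(X_{n;\bf{a}},A)$, and for generic real $s$ the section is transverse to the zero section.

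The crux is the orientation compatibility. The adjunction sequence induces a short exact sequence of real Cauchy--Riemann operators and hence a canonical isomorphism of relative determinant lines
\[
\det D^{(T\CP^n,T\RP^n)}_{/i}\cong \det D^{(TX_{n;\bf{a}},TX_{n;\bf{a}}^\tau)}_{/i}\otimes \det\bp^{(V_{n;\bf{a}},V^\R_{n;\bf{a}})}_{/i}
\]
along the zero locus of $\wt s$. Lemma~\ref{lem_orm} applied to both $\CP^n$ and $X_{n;\bf{a}}$, with compatible $\tau$-orienting structures, shows this isomorphism is orientation-preserving; Corollary~\ref{cor_cp} guarantees that $w_1(\M_{0,l+1}(\CP^n,A)^*)$ and $w_1(\ind\bp^{(V_{n;\bf{a}},V^\R_{n;\bf{a}})}_{/i})$ agree, so the orientation on $\wt s^{-1}(0)$ as a transverse preimage matches the one on $\M_{0,l+1}(X_{n;\bf{a}},A)$ from Theorem~\ref{thm_ogw0}. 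The usual pseudo-cycle Euler class argument then yields the cycle-level identity
\[
(\ev\times\mk f)_*[\M_{0,l+1}(X_{n;\bf{a}},A)]=(\ev\times\mk f)_*\bigl(e(\ind\bp^{(V_{n;\bf{a}},V^\R_{n;\bf{a}})}_{/i})\cap [\M_{0,l+1}(\CP^n,A)]\bigr),
\]
and pairing with $h_1\wedge\dots\wedge h_{l+1}\wedge h^{DM}$ produces the stated integral formula.

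The main obstacle is controlling the Gromov compactification: one must show that the section $\wt s$ extends continuously over strata of stable maps from bubble domains and that the $\tau$-multiply covered locus (together with its contribution to $\wt s^{-1}(0)$) has real codimension at least two in both $\M_{0,l+1}(\CP^n,A)$ and $\M_{0,l+1}(X_{n;\bf{a}},A)$. Strong semi-positivity of both target manifolds handles the latter, and since $k=0$ and $A\in\mathcal{A}$ the bad stratum $U$ is empty, so by Theorem~\ref{cor_rs} no orientation discontinuities arise across codimension-one boundary. A minor additional check is that the forgetful map to $\M_{0,l+1}$ is compatible with the inclusion $\M_{0,l+1}(X_{n;\bf{a}},A)\hookrightarrow\M_{0,l+1}(\CP^n,A)$, which allows the $h^{DM}$ factor to be pulled back consistently to either moduli space.
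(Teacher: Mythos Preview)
Your proposal is correct and follows the same conceptual thread as the paper, but the emphasis and the technical load-bearing steps differ. The paper's proof does not re-verify admissibility of $X_{n;\bf a}$ or build the section $\wt s$ from scratch; instead it invokes \cite[Proposition~11]{psw} to get that $\M_{0,l+1}(\CP^n,A)$ is already a smooth closed orbifold and that the index bundle is a smooth orbibundle, bypassing all Gromov-compactification and multiply-covered-map issues on the $\CP^n$ side. The heart of the paper's argument is then purely about orientations: it quotes \cite[Corollary~1.10]{Geo1} for a canonical isomorphism $\Lt\wt{\mk M}_{0,l+1}(\CP^n,b)\cong\det\bp^{(V_{n;\bf a},V^\R_{n;\bf a})}$ on each piece, and checks via Corollaries~\ref{cor_det} and~\ref{cor_ms} that the relative signs of $g$ on the moduli space and of $i_{(V_{n;\bf a},V^\R_{n;\bf a})}$ on the determinant bundle agree across every codimension-one stratum (splitting into the two cases $n+1\equiv\sum a_i$ mod~4 and $n+1\equiv\sum a_i+2$ mod~4, the latter handled by padding with $2\mathcal O(1)$). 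This shows the Euler class lives in $H^*(\M_{0,l+1}(\CP^n,A);\mathcal Z_{w_1})$ with the correct twist, so the integral is well-defined; the identification with the OGW invariant of $X_{n;\bf a}$ is then deferred to the classical argument and \cite[Theorem~3]{psw}.

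Your route via the adjunction sequence and a direct transverse-preimage comparison is a legitimate alternative and is more self-contained, but it carries more overhead: you must genuinely control the compactification and the $\tau$-multiply-covered locus on both sides, whereas the paper sidesteps this entirely by working with the integrable $J$ on $\CP^n$ and the orbifold result from \cite{psw}. Your orientation check (matching $w_1$'s via Corollary~\ref{cor_cp}) is equivalent in content to the paper's sign-matching of the gluing maps, just phrased at the level of characteristic classes rather than stratum-by-stratum.
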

\begin{proof}
By \cite[Proposition 11]{psw}, $\M_{0,l+1}(\CP^{n},A)$ is a smooth closed orbifold and 
$$
\ind \bp^{(V_{n;\bf{a}} ,V^\R_{n;\bf{a}})}_{/i_{(V_{n;\bf{a}},V^\R_{n;\bf{a}})}}\ri\M_{0,l+1}(\CP^{n},A)
$$
 is a smooth orbibundle; our construction and the construction of \cite[Proposition 11]{psw} differ only by the rule  specifying which bubble is conjugated, which does not affect the argument. By \cite[Corollary 1.10]{Geo1}, there is a canonical isomorphism 
$$\Lt  \wt{\mk{M}}_{0,l+1}(\CP^n,b)\cong \det \bp^{(V_{n;\bf{a}} ,V^\R_{n;\bf{a}})}_{/i_{(V_{n;\bf{a}},V^\R_{n;\bf{a}})}}|_{\wt{\mk{M}}_{0,l+1}(M,b)}.
$$
If $n+1\equiv\sum a_i$ mod 4, $w_i(\R\mathbb{P}^n)=w_i(V^\R_{n;\bf{a}})$ for $i=1,2$   and the relative signs of the gluing maps on the bundle and on the moduli space are equal by
   Corollaries \ref{cor_det} and~\ref{cor_ms}.
    Thus, the canonical isomorphism extends over $\M_{0,l+1}(\CP^n, A)$. 
If $n+1\equiv \sum a_i +2$ mod 4, we apply the same argument to $V_{n;\bf{a}}\oplus 2\mathcal{O}(1)$ and note that $\det \bp^{( 2\mathcal{O}(1),2\mathcal{O}^\R(1))}_{/i_{( 2\mathcal{O}(1),2\mathcal{O}^\R(1))}}$ is canonically oriented. Thus, there is a canonical isomorphism 
$$
\Lt  \M_{0,l+1}(\CP^n,A)\cong \det \bp^{(V_{n;\bf{a}} ,V^\R_{n;\bf{a}})}_{/i_{(V_{n;\bf{a}},V^\R_{n;\bf{a}})}}
$$
and   the Euler class 
$$e(\ind \bp^{(V_{n;\bf{a}} ,V^\R_{n;\bf{a}})}_{/i_{(V_{n;\bf{a}},V^\R_{n;\bf{a}})}})\in H^{\text{top}}(\M_{0,l+1}(\CP^n,A); \mathcal{Z}_{w_1(\M_{0,l+1}(\CP^n,A))}),
$$
 where $\mathcal{Z}_{w_1(\M_{0,l+1}(\CP^n,A))}$ is the local system of orientations on the moduli space. This implies the integral is well-defined and the fact that it   gives the open Gromov-Witten invariant   follows as in the classical case and in the proof of \cite[Theorem 3]{psw}.
\end{proof}

\subsection{Dependence on     orienting choices }\label{ssec_doc}
 
The signed counts of   maps arising from Theorems \ref{thm_ogw0} and \ref{thm_ogwk} are independent of the choice of a (local) orientation on the space of domains (and moreover, there is a canonical choice).
 Lemma \ref{lem_orm} orients  $\det D\ri \M_{k,l+1}^*(M,A)-U$ using a choice of a spin structure on $2 E^{\wt\tau}\oplus TL$  and  if  not all Maslov indices are divisible by $4$, a choice of representatives  $b_i\in H_2(M,L;\Z)$ for the elements of $H_2(M,L;\Z)/\text{Im}(\id+\tau_*)$.  \\
 
 Suppose all Maslov indices of $(2 E\oplus TM, 2 E^{\wt\tau}\oplus TL)$ are divisible by 4 and thus the (local) orientation on the moduli space is fixed by a choice of a spin structure on $2 E^{\wt\tau}\oplus TL$. 
  Example~\ref{ex_ntq} shows that the invariants depend non-trivially on this choice. The dependence can be seen as follows.
   Let $\mathcal{B}\subset H_1(L;\Z_2)$ be the set of classes which the fixed loci of the real maps in the class $A\in \H_2(M)$ represent. For every choice of a class $q\in \mathcal{B}$, we define a refined invariant $\OGW^q_{A,k,l+1}$ counting only those maps whose fixed locus  represent the class $q$. The proof that these numbers are invariant is exactly the same as before and 
$$
\OGW_{A,k,l+1}=\sum_{q\in \mathcal{B}} \OGW^q_{A,k,l+1}.
$$
Changing the spin structure negates   $\OGW^q_{A,k,l+1}$ if the change is non-trivial over~$q$. This implies that $\OGW_{A,k,l+1}$ may change even in the absolute value if $|\mathcal{B}|>1$. However, the invariant 
 $$
\sum_{q\in \mathcal{B}} |\OGW^q_{A,k,l+1}|
$$
is independent of this choice. Moreover, for every choice  of a spin structure
$$
\sum_{q\in \mathcal{B}} |\OGW^q_{A,k,l+1}|\geq \OGW_{A,k,l+1}.
$$
This refinement holds for the Welschinger's invariant \cite{Wl, Wel} as well, where we can sum over the absolute value of the invariant for a fixed $q$.

\begin{example}\label{ex_ntq}  
Let $M=T^2\times \CP^{4n-1}$ and $\tau(s,t, z)=(-s , t, \bar z)$, where $s,t\in \R/2\pi\Z$ are  angular coordinates. The fixed locus, 
$$L = \{0,\pi\}\times S^1\times \R\mathbb{P}^{4n-1},$$ is disconnected. 
Let $A=\text{pt}\times\text{pt}\times \text{line}\in H_2(M)$, $H_1= S^1\times\text{pt}\times H\in H_{8n-3}(M)$, where $H$ is the homology class of a hyperplane, and $H_2= T^2\times \text{pt}\in H_{2}(M)$. There are exactly two real curves passing through $H_1$ and $H_2$ in the class $A$. The sign with which we count these curves is determined by a choice of a spin structure on the (disconnected) Lagrangian $L$, and we are free to choose a different one on each component. Thus,  $\OGW_{A,0,2}= 0$ for some choices   and  $|\OGW_{A,0,2}|=2$ for the other choices. The cardinality of the set $\mathcal{B}$ in this case is 2, with representatives $\{0\}\times\text{pt}\times \R\mathbb{P}^1$ and $\{\pi\}\times\text{pt}\times \R\mathbb{P}^1$, and 
$$ \sum_{q\in B} |\OGW^q_{A,0,2}(M,A)|=2,$$ regardless of the choice of  a spin structure.
\end{example}

If not all Maslov indices of $(2 E\oplus TM,2 E^{\wt\tau}\oplus TL)$ are divisible by 4, the invariants depend on the choice of representatives $b_i\in H_2(M,L;\Z)$ in Definition~\ref{def_tori} used to determine the orientation.

\end{document}